\numberwithin{equation}{section}
\newtheorem{theorem}{Theorem}[section]
\newtheorem{proposition}[theorem]{Proposition}
\newtheorem{lemma}[theorem]{Lemma}
\newtheorem{corollary}[theorem]{Corollary}
\theoremstyle{definition}
\newtheorem{remark}[theorem]{Remark}
\newtheorem{notation}[theorem]{Notation}
\newtheorem{definition}[theorem]{Definition}
\newcommand{\aff}{\text{aff}}
\newcommand{\interior}{\text{int}}
\newcommand{\relint}{\text{relint}}
\newcommand{\conv}{\text{conv}}
\newcommand{\epi}{\text{epi}}
\newcommand{\Pol}{\text{Pol}}
\title{Dimension-raising Homomorphisms between Lattices of Convex Bodies}
\author[1]{Luisa Allgaier\footnote{Work on this article was supported by the University of Mannheim through appointing the first author as a research assistant.}}
\author[1]{Heinz Weisshaupt\footnote{Main parts of this article have been written during a stay of the second author as visiting professor at the university of Mannheim.}}
\affil[1]{Department of Mathematics, University of Freiburg}
\begin{document}
\maketitle

\begin{abstract}
$ $\\
We settle the first unsolved case of a problem of P. M. Gruber, asked by him in 1991 in \cite{Gr91}, namely, to investigate the homomorphisms from the lattice of convex bodies of ${\mathbb{E}}^c$ to the lattice of convex bodies of ${\mathbb{E}}^d$ for $c<d$. We completely describe these homomorphisms for the case $d=c+1$, for $c \ge 3$. (The case $d=c$ was settled in \cite{Gr91}.) The obtained result is then applied to characterize anti-homomorphisms and homomorphisms from lattices of convex bodies to lattices of convex functions.\\ \\
MSC2010 52A20\\ \\
Keywords: convex bodies, dimension raising, lattice homomorphisms, anti-homomorphisms
\end{abstract}

\section{Introduction}
Endomorphisms of the lattice of convex bodies of ${\mathbb E}^{c}$ and the lattice of closed convex subsets of ${\mathbb E}^{c}$ have been characterized in \cite{Gr91} and \cite{SLO11} respectively. The connection between such endomorphisms and the concept of duality were investigated in \cite{BS} and \cite{SLO11}. Further characterisations regarding dualities, order reversing and order preserving maps on spaces of convex sets or functions have been obtained in \cite{AM07.1}, \cite{AM09.1}, \cite{AM09} and \cite{ASlo11}. The cited results have in common that the considered transformations admit representations that are either induced by affine bijections between underlying Euclidean spaces or are concatenations of affine bijections with special duality transformations. For further results concerning the representability of transformations on spaces of convex sets by affine bijections consult \cite[Theorem 13.4, 13.5, 13.7 and 13.8]{Gr07}, \cite{AM10} and \cite{MSeSlo11}. Note that in less rigid cases the class of affine bijections has to be replaced by the less restrictive class of projective ones (see \cite{We2001}, \cite{SeSlo} and \cite{AMF11}).\\ \\
However, the cited results---and the literature as far as known to us---also share the property, that the involved transformations have to preserve the dimension of convex sets or epigraphs of functions i.e. they are not dimension raising.\footnote{This is even true for \cite[Theorem 1]{AM10} in the sense that by the one-one correspondence between convex sets and 1-homogeneous functions \cite[Theorem 1]{AM10} is equivalent to the dimension preserving result \cite[Theorem 2]{AM10}, i.e. \cite[Theorem 1]{AM10}---seemingly concerning a dimension raising situation---reduces to \cite[Theorem 2]{AM10} and thus tacitly deals with a situation in that the dimension is preserved.} This is contrasted by our investigation that permits a raise of dimension by the homomorphisms and anti-homomorphisms under consideration.\\ \\
Note that the problem of determining the lattice homomorphisms 
from the lattice of compact convex sets of ${\mathbb{E}}^c$ to those of 
${\mathbb{E}}^d$, for $c<d$ was posed by P. M. Gruber in 1991. In \cite{Gr91} he writes:
\begin{enumerate}
\item[]
'It is an open problem to characterize the homomorphisms for $c<d$.`
\end{enumerate}
Utilizing second countability of ${\mathbb E}^{d}$---a surprisingly powerful tool already used in \cite{Gr91}---we obtain with Lemma \ref{Dimlemma} a general dimension bound for dimension raising homomorphisms for $c<d$. We then investigate the first interesting case of P.M. Grubers question, i.e., when 
$d=c+1$, and under the hypothesis $c \ge 3$ we completely solve this case 
of the problem.\\ \\
Beside our discovery/invention of Lemma \ref{Dimlemma} our proofs are based on completely new ideas utilizing a combinatorial theorem of Radon as well as transversality results related to a combinatorial theorem of Helley. (Of course we also employ P.M. Grubers idea of applying the theorem of affine geometry in the final step of our proofs of various lemmas, as has become standard in the literature).\\ \\
We denote by $\mathbb{E}^{d}$ the $d$-dimensional Euclidean space endowed with the usual inner product $\langle .| .\rangle$ and by $
\mathscr{C}(\mathbb{E}^{d})$ the {\it space of convex bodies in ${\mathbb E}^{d}$ i.e. the space of all convex compact sets $C\subset \mathbb{E}^{d}$ including the empty set}.
Let $\conv(X)$ denote the convex hull, $\aff(X)$ the affine hull, $\interior(X)$ the interior and $\relint(X)$ the relative interior (in $\aff(X)$) of an arbitrary set $X\subseteq {\mathbb E}^{d}$. We further denote by $\dim(C)$ or $\dim C$ the dimension of a convex set $C\subseteq {\mathbb E}^{d}$ i.e. the dimension of the affine subspace spanned by $C$ and regard the empty set as $(-1)$-dimensional.\\ \\
Note that $(\mathscr{C}(\mathbb{E}^{d}), \wedge, \vee )$ forms a lattice with respect to the operations
\begin{equation*}
C \wedge D = C \cap D \hbox{ and } C \vee D = \conv(C \cup D) \hbox { for } C,D \in \mathscr{C}.
\end{equation*}
We will use the notation $C \vee D := {\conv} (C \cup 
D)$ also for arbitrary (i.e. not necessarily convex) subsets $C,D$ of ${\Bbb{E}}^d$. We recall that
a \emph{(lattice) homomorphism} from some lattice $( K, \wedge, \vee)$ to some lattice $( L, \sqcap, \sqcup )$ is a mapping $\Phi: K \to L$ such that
\begin{equation*}
\Phi(C \wedge D) = \Phi(C) \sqcap \Phi(D) \hbox{ and } \Phi(C \vee D) = \Phi(C) \sqcup \Phi(D),
\end{equation*}
while an anti-homomorphism $\Lambda:K\to L$ fulfils
\begin{equation*}
{\Lambda}(C \wedge D) = {\Lambda}(C) \sqcup {\Lambda}(D) \hbox{ and } {\Lambda}(C \vee D) = {\Lambda}(C) \sqcap {\Lambda}(D).
\end{equation*}
The simplest lattice homomorphisms are functions $\Phi:{\mathscr C}({\mathbb E}^{c})\to {\mathscr C}({\mathbb E}^{d})$ that map all convex compact sets---including the empty set---onto one and the same set. We call such homomorphisms {\it trivial}.\\ \\
Our main result Theorem \ref{Theorem Homomorphisms} provides a complete characterisation of the non-trivial homomorphisms $\Phi:\mathscr{C}(\mathbb{E}^{c})\to \mathscr{C}(\mathbb{E}^{c+1})$ provided that $c\geq 3$. It shows that a non-trivial homomorphism $\Phi:\mathscr{C}(\mathbb{E}^{c})\to \mathscr{C}(\mathbb{E}^{c+1})$ either raises the dimension of all non-empty convex bodies of $E^{c}$ by $1$ or keeps the dimension of all convex bodies constant.
\\ \\
If $\Phi$ keeps the dimension constant, we obtain that $\phi$ is induced by an affine bijection $\phi:{\mathbb E}^{c}\to H\subseteq {\mathbb E}^{c+1}$ ---which parallels the main result on endomorphisms provided in \cite{Gr91}.
In case that $\Phi$ is dimension raising we further have to distinguish the cases that $\Phi$ maps the empty set to the empty set, or that $\Phi(\emptyset)$ equals some one-point set $\{ o \} \subset {\mathbb E}^{c+1}$. The case that $\Phi(\emptyset)=\emptyset$ finally splits into the cases that $\Phi$ maps all one-point sets to proper parallel line-segments or that the images of any two distinct one-point sets are not parallel. In any case we eventually obtain that the representation of $\Phi$ involves an affine bijection $\phi:{\mathbb E}^{c}\to H\subseteq {\mathbb E}^{c+1}$ proving that even homomorphisms that raise the dimension by $1$ are rather rigid.\\ \\
The proof of Theorem \ref{Theorem Homomorphisms} is split into three lemmas: Lemma \ref{Gruberfalllemma} treats the case that $\Phi$ is not dimension raising, Lemma \ref{E-empty-to-point-lemma} the case that $\Phi(\emptyset)\neq \emptyset$ and Lemma \ref{empty-to-empty-lemma} the parallel as well as the non-parallel case of a dimension raising $\Phi$ for which $\Phi(\emptyset)=\emptyset$.\\ \\
The most important ingredient in the proof of Lemma \ref{E-empty-to-point-lemma} and Lemma \ref{empty-to-empty-lemma}---and thus of Theorem \ref{Theorem Homomorphisms}---are considerations concerning the relationship between the dimension of a convex body $C\subset {\mathbb E}^{c}$ and its image $\Phi(C)\subset {\mathbb E}^{d}$ under some arbitrary lattice homomorphism $\Phi:{\mathscr C}({\mathbb E}^{c})\to {\mathscr C}({\mathbb E}^{d})$. The lower bound for the dimension of $\Phi(C)$ is obtained in Proposition \ref{dimprop} by a simple application of Radon's theorem, while the upper bound is obtained in Lemma \ref{Dimlemma} by a less simple application of second countability of ${\mathbb E}^{d}$.\\ \\
While the proofs of Lemma \ref{Gruberfalllemma} and Lemma \ref{E-empty-to-point-lemma} are---up to dimension considerations addressed above---rather elementary, the proof of Lemma \ref{empty-to-empty-lemma} is not. First it involves with Lemma \ref{radon-affine-dependence-lemma} (and thus Proposition \ref{radon-affine-dependence-proposition}) an argument---based on a further application of Radon's theorem---ensuring that even in the dimension raising case affine dependence is preserved in a certain sense by lattice homomorphism. Secondly we have to use two transversality theorems that are displayed in Appendix \ref{transversality-theorems} and rely on Helly's theorem. Note, that to our knowledge Theorem \ref{transversality theorem}---a transversality result for line segments directed towards a common point $o$---has not occurred in the literature before.\\ \\ 
As an application of Theorem \ref{Theorem Homomorphisms} we prove that the homomorphisms from ${\mathscr C}({\mathbb E}^{c})$ to certain spaces of convex functions $f:\mathbb{E}^{c}\to [0,+\infty]$ are induced by affine transformations (Theorem \ref{Homomorphismen Satz Anwendungen}) while the anti-homomorphisms from ${\mathscr C}({\mathbb E}^{c})$ to certain spaces of convex functions $f:\mathbb{E}^{c}\to (-\infty,+\infty]$ are induced by the concatenation of affine transformations with the Legendre-Fenchel transformation (Theorem \ref{anti-homo-theorem}). Note that the anti-homomorphisms and homomorphisms under consideration are again dimension raising.\\ \\
In Appendix \ref{endomorphisms} we restate the main result of \cite{Gr91} for dimensions $ \geq 2$ and show that it is a simple consequence of the dimension considerations in Section \ref{dimension-arguments} and Proposition \ref{Prop Bedingungen für Phi}.\\ \\
Given a mapping $\Phi:{\mathscr C}({\mathbb E}^{c})\to {\mathscr C}({\mathbb E}^{d})$ we use (cf. \cite{Gr91}) the term $\Phi(x)$ as a synonym for $\Phi(\{ x \})$. For $x,y \in {\mathbb E}^{c}$ and $C\in {\mathscr C}({\mathbb E}^{c})$ we use  $x\wedge y$, $x\vee y$, $x\wedge C$ and $x\vee C$ as synonyms for $\{ x \} \wedge \{ y \}$, $\{ x \} \vee \{ y \}$, $\{ x\} \wedge C$ and $\{ x \}\vee C$. We let $\bigvee_{r\in R} C_{r} := \conv(\bigcup_{r\in R} C_{r})$ and $\bigwedge_{r\in R} C_{r} :=\bigcap_{r\in R} C_{r}$ and further write $(\forall x,y,z\in X)\dots $ instead of $(\forall x)(\forall y)(\forall z) ((x\in X\ \hbox{and}\ y\in X\ \hbox{and}\ z\in X)\Rightarrow \dots )$. 

\begin{remark}
Note that any lattice homomorphism is order preserving while any anti-homomorphism is order reversing. We will use this fact throughout the article without further reference.
\end{remark}
\begin{remark}
The dependence structure of the obtained results and the organization of the article can be briefly summarised as follows: We use the results of the Sections \ref{preliminaries} and \ref{dimension-arguments} together with Proposition \ref{Prop Bedingungen für Phi} to prove the Lemmas of Section \ref{section-no-dim-gape} and \ref{section-dim-gape}. (Note that for the proof of Lemma \ref{empty-to-empty-lemma} in Section \ref{section-dim-gape} we make additional use of the transversality theorems of Appendix \ref{transversality-theorems}.) The Lemmas of the sections \ref{section-no-dim-gape} and \ref{section-dim-gape} together imply the main characterization Theorem \ref{Theorem Homomorphisms} for homomorphisms from ${\mathscr C}({\mathbb E}^{c})$ to ${\mathscr C}({\mathbb E}^{c+1})$. The reason for including the main result, Theorem \ref{Theorem Homomorphisms}, into Section \ref{section-main-theo} is twofold: First of all we wanted to place the statement of the main result prior to the technical details of its proof, secondly it seems appropriate to state Theorem \ref{Theorem Homomorphisms} right after Proposition \ref{Prop Bedingungen für Phi} since the conjunction of Theorem \ref{Theorem Homomorphisms} and Proposition \ref{Prop Bedingungen für Phi} provides a further characterisation of the homomorphisms from ${\mathscr C}({\mathbb E}^{c})$ to ${\mathscr C}({\mathbb E}^{c+1})$ displayed as Corollary \ref{char-of-homo-corollary}.  The results obtained in Section \ref{section-applications} are consequences of Theorem \ref{Theorem Homomorphisms}.
\end{remark}

\section{Preliminaries}\label{preliminaries}

We make use of the following preliminary results.

\begin{notation}
For $a,b\in {\mathbb E}^{c}$ we let $[a,b]:=a\vee b$, $(a,b] := [a,b]\setminus \{ a \}$, $[a,b):=(b,a]$ and $(a,b):=(a,b]\cap [a,b)$ and call these sets line-segments provided that they are not empty. We call them proper line segments or non-degenerate line-segments provided they are one-dimensional.  
\end{notation}

\begin{proposition}\label{Geradenprop}
Let $\mathcal{G}$ be a set of pairwise intersecting lines in $\mathbb{E}^{d}$ that contains three distinct lines $g_{0},g_{1},g_{2}$ as elements such that $g_{0}\nsubseteq \aff(g_{1},g_{2})$.
Then
\begin{equation*}
\bigcap \mathcal{G} = \{ o \}
\end{equation*}
for some $o\in {\mathbb E}^{d}$.\hfill $\Box$
\end{proposition}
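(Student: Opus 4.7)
The plan is to first pin down a specific point $o$ from the three distinguished lines $g_0, g_1, g_2$, and then show every other line of $\mathcal{G}$ must pass through $o$.

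I would begin by observing that $g_1$ and $g_2$ are distinct lines that intersect, so they meet in a single point, call it $o$, and together span a two-dimensional affine flat $\aff(g_1,g_2)$. Now $g_0$ meets $g_1$ at some point $q_1$ and $g_2$ at some point $q_2$. If $q_1 \neq q_2$ then $g_0$ contains two distinct points of $\aff(g_1,g_2)$ and would lie in that flat, contradicting the hypothesis $g_0 \nsubseteq \aff(g_1,g_2)$. Therefore $q_1 = q_2$, and this common point must coincide with $o = g_1 \cap g_2$. Hence $o \in g_0 \cap g_1 \cap g_2$.

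Next, I would take an arbitrary $g \in \mathcal{G}$ and argue by contradiction that $o \in g$. Assume $o \notin g$. Since $g$ meets each of $g_1, g_2$ in a single point $r_1, r_2$, both different from $o$, and since $r_1 = r_2$ would force $r_1 \in g_1 \cap g_2 = \{o\}$, we must have $r_1 \neq r_2$; thus $g$ is the line through $r_1, r_2$ and consequently $g \subseteq \aff(g_1, g_2)$. On the other hand, $g_0$ also meets $g$, so $g \cap g_0 \neq \emptyset$. But $g_0$ is a line not contained in the two-dimensional flat $\aff(g_1, g_2)$ while already meeting it at $o$, so $g_0 \cap \aff(g_1, g_2) = \{o\}$. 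Therefore $g \cap g_0 \subseteq \{o\}$, forcing $o \in g$, a contradiction.

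This establishes $o \in g$ for every $g \in \mathcal{G}$, so $o \in \bigcap \mathcal{G}$. Conversely, $\bigcap \mathcal{G} \subseteq g_1 \cap g_2 = \{o\}$, and the equality $\bigcap \mathcal{G} = \{o\}$ follows. The only subtle point is the transversality step where one uses that $g \subseteq \aff(g_1,g_2)$ combined with $g_0 \not\subseteq \aff(g_1,g_2)$ forces the intersection of $g$ with $g_0$ into $\{o\}$; the rest is straightforward case analysis on how two distinct intersecting lines share a single point.
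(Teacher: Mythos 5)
Your proof is correct and complete: pinning down $o$ as the common point of $g_0,g_1,g_2$ via the non-coplanarity hypothesis, and then forcing every other line through $o$ by the observation that $g_0\cap\aff(g_1,g_2)=\{o\}$, is exactly the standard argument. The paper states this proposition without proof (marking it as elementary), so there is nothing to compare against; your write-up supplies the intended reasoning.
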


\begin{notation}
Given points $s,p,q\in {\mathbb E}^{d}$ we let $\triangle(s,p,q):=s\vee p \vee q$ and call $\triangle(s,p,q)$ a triangle. The triangle $\triangle(s,p,q)$ is called non-degenerate if it is a two dimensional convex body i.e. if $\triangle(s,p,q)$ is neither a point nor a proper line-segment.
\end{notation}

\begin{proposition}\label{Dreiecksprop}
Let $\triangle (s,p,q)$ be a non-degenerate triangle, let $r \in \triangle (s,p,q)$, $r \notin [p,q]$ and $o \in (p,q)$. Then 
\begin{equation*}
\relint([s,o] \cap \triangle (r,p,q))\setminus \{ o \} \neq \emptyset.
\end{equation*}\hfill $\Box$
\end{proposition}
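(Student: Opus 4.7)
The plan is to show that $[s,o]\cap\triangle(r,p,q)$ contains a point other than $o$. Since this intersection is a convex subset of the line $\aff\{s,o\}$, it is necessarily a segment; as soon as it contains two distinct points it is non-degenerate, so its relative interior is an open segment of positive length, and removing the single point $o$ from it still leaves infinitely many points, regardless of whether $o$ turns out to be an endpoint or an interior point of the segment. Thus the whole statement reduces to producing one extra point in the intersection.

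All of $s,p,q,r,o$ lie in the $2$-dimensional plane $\aff\{s,p,q\}$, and since $\triangle(s,p,q)$ is non-degenerate, $s,p,q$ are affinely independent. Hence $r$ has a unique barycentric representation
\begin{equation*}
r=\alpha s+\beta p+\gamma q,\qquad \alpha+\beta+\gamma=1,\quad \alpha,\beta,\gamma\geq 0,
\end{equation*}
with $\alpha>0$ because $r\notin[p,q]$, and $o=\mu p+(1-\mu)q$ with $\mu\in(0,1)$ because $o\in(p,q)$.

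Parametrising $[s,o]$ by $x_\lambda:=\lambda s+(1-\lambda)o$ for $\lambda\in[0,1]$ and using $s=(1/\alpha)(r-\beta p-\gamma q)$, one rewrites
\begin{equation*}
x_\lambda=\frac{\lambda}{\alpha}\,r+\Bigl((1-\lambda)\mu-\frac{\lambda\beta}{\alpha}\Bigr)\,p+\Bigl((1-\lambda)(1-\mu)-\frac{\lambda\gamma}{\alpha}\Bigr)\,q,
\end{equation*}
and a direct check shows that the three coefficients sum to $1$. Because $\alpha,\mu,1-\mu>0$, all three coefficients are strictly positive for every sufficiently small $\lambda>0$, and hence $x_\lambda\in\triangle(r,p,q)$ for all such $\lambda$, giving the required second point of the intersection. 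There is essentially no obstacle beyond this barycentric bookkeeping; the strict positivity of $\alpha$ and of $\mu,1-\mu$ — exactly the content of the two non-degeneracy hypotheses on $r$ and $o$ — is what makes the argument work, and the conclusion would fail if either hypothesis were relaxed, which is a useful sanity check.
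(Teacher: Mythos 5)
Your proof is correct: the reduction to finding a single point of $[s,o]\cap\triangle(r,p,q)$ other than $o$ is sound (the intersection of two compact convex sets on a line is a closed segment, and a non-degenerate segment has infinite relative interior), and the barycentric computation, including the verification that the coefficients sum to $1$ and are strictly positive for small $\lambda>0$ precisely because $\alpha>0$ and $\mu\in(0,1)$, is accurate. The paper states this proposition without proof, so there is no argument to compare against; your write-up is a complete and appropriately elementary justification.
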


\begin{notation}
We call any 2-dimensional affine space a plane.
\end{notation}

\begin{proposition}\label{existence-of-second-line}
Let $x\in (s_{1},s_{2})$ and let $s_{1}\in (u_{1},v_{1})$, with $(u_{1},v_{1})\nparallel (s_{1},s_{2})$. Then there exist $u_{2},v_{2}$ such that $s_{2}\in (u_{2},v_{2})$ and
\begin{equation*}
\begin{split}
\{ x \} = (u_{1},v_{2})\cap (v_{1}, u_{2})=(u_{1},v_{2})\cap (s_{1},s_{2})=(v_{1},u_{2})\cap (s_{1},s_{2}),\\
\{ x \} = [u_{1},v_{2}]\cap [v_{1}, u_{2}]=[u_{1},v_{2}]\cap [s_{1},s_{2}]=[v_{1},u_{2}]\cap [s_{1},s_{2}].
\end{split}
\end{equation*}
Further $x, s_{1},s_{2},u_{1},v_{1},u_{2},v_{2}$ are located in one plane.\hfill $\Box$
\end{proposition}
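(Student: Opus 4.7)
I plan to work inside the plane $P=\aff(u_{1},v_{1},s_{2})$, which is genuinely two-dimensional: the hypothesis $(u_{1},v_{1})\nparallel (s_{1},s_{2})$ forces $\aff(s_{1},s_{2})\neq \aff(u_{1},v_{1})$, and since these two lines share the point $s_{1}$ they meet only at $s_{1}$; in particular $s_{2}\notin \aff(u_{1},v_{1})$. Note also that $x\in(s_{1},s_{2})\setminus\{s_{1}\}$ lies in $\aff(s_{1},s_{2})\setminus\aff(u_{1},v_{1})$, so $x\notin\aff(u_{1},v_{1})$. Consequently every point in the statement lies in $P$, and the last assertion of the proposition will come for free.

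The construction is the classical trapezoid-diagonals picture. Let $\ell\subset P$ be the unique line through $s_{2}$ parallel to $\aff(u_{1},v_{1})$, and set $L_{1}:=\aff(u_{1},x)$ and $L_{2}:=\aff(v_{1},x)$. These are distinct lines in $P$, both different from $\aff(u_{1},v_{1})$ (since $x$ is off that line), and neither is parallel to $\aff(u_{1},v_{1})$ (each meets it, at $u_{1}$ resp.\ $v_{1}$), hence neither is parallel to $\ell$. Define
\begin{equation*}
v_{2}:=L_{1}\cap\ell, \qquad u_{2}:=L_{2}\cap\ell.
\end{equation*}

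The geometric heart of the proof is to check that (i) $s_{2}$ lies strictly between $u_{2}$ and $v_{2}$, and (ii) $x$ lies strictly between $u_{1}$ and $v_{2}$, and strictly between $v_{1}$ and $u_{2}$. Both follow from a one-line affine computation: writing $s_{1}=(1-\alpha)u_{1}+\alpha v_{1}$ and $x=(1-t)s_{1}+ts_{2}$ with $\alpha,t\in(0,1)$, the parallel projection along $\aff(s_{1},s_{2})$ from $\aff(u_{1},v_{1})$ onto $\ell$ (which sends $s_{1}$ to $s_{2}$) scales signed distances by the positive factor $1/(1-t)$, mapping $u_{1}\mapsto u_{1}'$ and $v_{1}\mapsto v_{1}'$ on $\ell$ with $s_{2}=(1-\alpha)u_{1}'+\alpha v_{1}'$. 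A direct comparison with the defining intersections shows $v_{2}=u_{1}'$ and $u_{2}=v_{1}'$, so $s_{2}\in(u_{2},v_{2})$, while the same formulas place $x$ in the interior of each diagonal. This is the only step that requires computation; I expect it to be the main obstacle, though a short one.

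With (i) and (ii) established, each of the five claimed equalities reduces to the fact that two distinct lines in the plane $P$ meet in exactly one point, and that point must be $x$ (since $x$ lies on both members of each pair by construction). The distinctness checks are of the same flavour throughout; for instance $\aff(u_{1},v_{2})\neq\aff(s_{1},s_{2})$, because otherwise $u_{1}\in\aff(s_{1},s_{2})\cap\aff(u_{1},v_{1})=\{s_{1}\}$, contradicting $s_{1}\in(u_{1},v_{1})$. The open-segment identities then follow from (i)-(ii), and the closed-segment identities follow from the open ones together with the verification that no endpoint of one segment can equal an endpoint of another (again by the same non-collinearity argument).
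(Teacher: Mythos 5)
The paper states this proposition without proof (it ends with a $\Box$), so there is nothing to compare against; judged on its own merits, your construction ($\ell$ the parallel to $\aff(u_1,v_1)$ through $s_2$, $v_2:=\aff(u_1,x)\cap\ell$, $u_2:=\aff(v_1,x)\cap\ell$) is the right one, the preliminary non-degeneracy observations are correct, and the reduction of the five identities to ``two distinct coplanar lines meet in at most one point'' is fine. However, the one step you yourself identify as the heart of the proof is wrong as written. The projection of $\aff(u_1,v_1)$ onto $\ell$ \emph{parallel to} $\aff(s_1,s_2)$ is the translation by $s_2-s_1$; it scales signed distances by $1$, not by $1/(1-t)$, and its images $u_1'=u_1+(s_2-s_1)$, $v_1'=v_1+(s_2-s_1)$ are in general neither $v_2$ nor $u_2$. (Take $u_1=(0,0)$, $v_1=(3,0)$, $s_1=(1,0)$, $s_2=(1,1)$, $t=\tfrac12$: then $v_2=(2,1)$, $u_2=(-1,1)$, while $u_1'=(0,1)$, $v_1'=(3,1)$.) So the claimed identification $v_2=u_1'$, $u_2=v_1'$ fails, and with it the stated derivation of $s_2\in(u_2,v_2)$.

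The repair is short and stays within your framework: the map you actually need is the \emph{central} projection (perspectivity) from $x$ of $\aff(u_1,v_1)$ onto $\ell$. Writing $x=(1-t)s_1+ts_2$, this map is $p\mapsto\bigl(1-\tfrac1t\bigr)p+\tfrac1t x$, an affine bijection between the two parallel lines with negative ratio $-(1-t)/t$, and it sends $u_1\mapsto v_2$, $v_1\mapsto u_2$ and $s_1\mapsto s_2$ by construction. Since affine maps preserve affine combinations, $s_1=(1-\alpha)u_1+\alpha v_1$ yields $s_2=(1-\alpha)v_2+\alpha u_2$ with $\alpha\in(0,1)$, hence $s_2\in(u_2,v_2)$; and $v_2=u_1+\tfrac1t(x-u_1)$ with $\tfrac1t>1$ gives $x=(1-t)u_1+tv_2\in(u_1,v_2)$, likewise $x\in(v_1,u_2)$. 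With these substitutions the remainder of your argument goes through unchanged.
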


\begin{remark}
Note that the triangles $\triangle (o,p,q)$ and $\triangle (o,r,s)$ in the following proposition are not contained in a common plane.
\end{remark}

\begin{proposition}\label{relint-two-triangles-intersection-prop}
Let $\triangle(o,p,q)$ and $\triangle(o,r,s)$ be two non-degenerate triangles that intersect in the non-degenerate line-segment $[o,y]$ i.e. suppose that\linebreak $[o,y]=\triangle (o,p,q) \cap \triangle (o,r,s)$. Suppose further that $y\in \relint(\triangle(o,p,q))$ and $y\notin [o,r]\cup [o,s]$ . Then $y\in (r,s)$.\hfill $\Box$
\end{proposition}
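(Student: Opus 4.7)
The plan is to prove that $y$ must lie on the relative boundary of $\triangle(o,r,s)$, at which point the hypothesis $y\notin [o,r]\cup [o,s]$ forces $y$ onto the open segment $(r,s)$.

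The first step would exploit $y\in\relint(\triangle(o,p,q))$ to continue a short distance past $y$ along the ray from $o$ through $y$ while remaining inside $\triangle(o,p,q)$. Since $[o,y]$ is non-degenerate we have $y\neq o$, and the intersection of $\triangle(o,p,q)$ with this ray is a segment $[o,y']$ with $y'\in[p,q]$; relative interiority forces $y\in(o,y')$, so there is a point $y^{*}\in(y,y')\subseteq\triangle(o,p,q)$ lying strictly beyond $y$ on the ray. Because $y^{*}\notin [o,y]=\triangle(o,p,q)\cap\triangle(o,r,s)$, one deduces $y^{*}\notin\triangle(o,r,s)$.

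The second step would consider $\ell\cap\triangle(o,r,s)$ for the line $\ell=\aff(\{o,y\})$. This intersection is a convex subset of a line, hence a segment; it contains both $o$ and $y$ but not $y^{*}$. Since $y^{*}$ lies on $\ell$ on the same side of $o$ as $y$ and strictly beyond it, $y$ must be an endpoint of $\ell\cap\triangle(o,r,s)$, and so $y$ lies on the relative boundary of $\triangle(o,r,s)$.

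The third step invokes the standard description of the relative boundary of a non-degenerate triangle $\triangle(o,r,s)$ as $[o,r]\cup [o,s]\cup [r,s]$. Since $y\notin [o,r]\cup [o,s]$ by hypothesis, we obtain $y\in[r,s]$; the same hypothesis also gives $y\neq r$ and $y\neq s$, which together yield $y\in(r,s)$. The only technical point I anticipate is step one — guaranteeing the point $y^{*}$ strictly past $y$ inside $\triangle(o,p,q)$ — which relies essentially on the two-dimensionality of $\triangle(o,p,q)$ together with $y\in\relint(\triangle(o,p,q))$; the remaining steps are routine consequences of convexity and the structure of a triangle's boundary.
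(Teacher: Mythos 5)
The paper gives no proof of this proposition (it is stated with a $\Box$), so there is nothing to compare against; judged on its own, your strategy---push past $y$ along the ray from $o$ through $y$ while staying inside $\triangle(o,p,q)$, conclude that $y$ sits on the relative boundary of $\triangle(o,r,s)$, and then use $y\notin[o,r]\cup[o,s]$ to land in $(r,s)$---is sound and does yield the result. Steps one and three are fine as written: the chord of $\triangle(o,p,q)$ along the ray is indeed $[o,y']$ with $y'\in(p,q)$ and $y\in(o,y')$, and the relative boundary of a non-degenerate triangle is the union of its three edges.

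The one inference that needs tightening is in step two. Knowing that the segment $\ell\cap\triangle(o,r,s)$ contains $o$ and $y$ but misses one particular point $y^{*}$ strictly beyond $y$ does not make $y$ an endpoint of that segment: a priori it could be $[o,z]$ for some $z\in(y,y^{*})$, which contains $o$ and $y$ and avoids $y^{*}$ while $y$ lies in its relative interior. What you actually need---and what your step one already delivers, since the argument there applies verbatim to every point of $(y,y')$ rather than to one chosen $y^{*}$---is that $\ell\cap\triangle(o,r,s)$ is disjoint from the whole interval $(y,y')$. A subsegment of $\ell$ containing $[o,y]$ and disjoint from $(y,y')$ does have $y$ as its endpoint on that side; and since $\ell\subseteq\aff(\triangle(o,r,s))$ (it passes through the two points $o,y$ of that triangle), being an endpoint of this chord does place $y$ on the relative boundary of $\triangle(o,r,s)$, as you claim. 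With that single quantifier repaired the proof is complete.
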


\begin{proposition}\label{different-rays-two-dim-prop}
Let $r$ be a ray emanating from some point $o$ and let $g$ be a line through $o$ such that $r\nsubseteq g$. Let $s$ be a non-degenerate line segment on $g$ and let $C_1, C_2$ be convex sets with $(C_{i}\cap r) \setminus o\neq \emptyset$. Then $\dim (C_1 \vee s) \wedge (C_2 \vee s) \geq 2$.\hfill $\Box$
\end{proposition}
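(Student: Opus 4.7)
My plan is to reduce the claim to a two-dimensional geometric statement by inscribing non-degenerate triangles in each $C_i\vee s$, and then to show that these triangles have a common interior in the ambient plane.

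First I would pick points $p_i\in (C_i\cap r)\setminus\{o\}$ for $i=1,2$ and write $s=[a,b]$ with $a\ne b$. Since $r$ is a ray emanating from $o\in g$ and $r\nsubseteq g$, the ray $r$ and the line $g$ meet only at $o$, so each $p_i\notin g$. Consequently each triangle $T_i:=\triangle(p_i,a,b)$ is non-degenerate. By convexity of $C_i\vee s$ we have $T_i\subseteq C_i\vee s$, and therefore $T_1\cap T_2\subseteq (C_1\vee s)\wedge(C_2\vee s)$; so it suffices to prove $\dim(T_1\cap T_2)\geq 2$.

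Next I would observe that both triangles lie in the plane $L:=\aff(r\cup g)$, and that the ray $r\setminus\{o\}$ is contained in one of the two open half-planes of $L$ cut out by $g$; hence $p_1$ and $p_2$ lie on the same side of $g$. Let $m$ be any relative interior point of $[a,b]$ (for instance the midpoint) and let $n$ be a unit vector in $L$ perpendicular to $g$ and pointing toward the common side containing $p_1,p_2$. A short barycentric-coordinate computation with respect to the vertices of $T_i$ shows that for all sufficiently small $\varepsilon>0$ the point $m+\varepsilon n$ lies in the relative interior of $T_i$ for \emph{both} $i=1,2$ simultaneously. Therefore $T_1\cap T_2$ has non-empty interior in $L$ and thus dimension $2$.

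The only mildly delicate point—and the real content of the proposition—is the observation that $T_1$ and $T_2$, sharing the full base $[a,b]$ and extending into the same open half-plane of $L\setminus g$, must overlap in a two-dimensional sliver near $[a,b]$. Everything else is routine plane geometry; in particular no deeper tools (Radon, Helly, or the transversality results) are needed.
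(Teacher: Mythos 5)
Your proof is correct; the paper states Proposition \ref{different-rays-two-dim-prop} without proof (it is one of the elementary facts marked only with $\Box$), and your argument supplies exactly the kind of routine planar justification that is intended. The key observations — that $p_1,p_2\notin g$ lie on the same open side of $g$ in the plane $\aff(r\cup g)$, so the two triangles over the common base $[a,b]$ share points of the form $m+\varepsilon n$ in both relative interiors — are all sound, and no heavier machinery is needed.
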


\begin{proposition}\label{parallel-lines-two-dim-prop}
Let $g, \tilde{g}$ be distinct parallel lines, $s\subseteq g$ a non-degenerate line-segment and let $\emptyset \neq C_1, C_2 \subseteq \tilde{g}$ be convex. Then $\dim (C_1 \vee s) \wedge (C_2 \vee s) = 2$.\hfill $\Box$
\end{proposition}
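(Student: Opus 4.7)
The plan is to prove the inequalities $\dim \leq 2$ and $\dim \geq 2$ separately. The upper bound is immediate: since $g$ and $\tilde{g}$ are distinct parallel lines, they span a unique two-dimensional plane $P := \aff(g \cup \tilde{g})$, which contains both $s \subseteq g$ and $C_1, C_2 \subseteq \tilde{g}$. Hence $C_i \vee s \subseteq P$ for $i=1,2$, and so $(C_1 \vee s) \wedge (C_2 \vee s) \subseteq P$ has dimension at most $2$.

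For the lower bound I would reduce immediately to the case of two triangles. Pick any $c_i \in C_i$ (possible since $C_i \neq \emptyset$); since $c_i \in \tilde{g}$ and $g \cap \tilde{g} = \emptyset$, we have $c_i \notin g \supseteq s$, so $T_i := \{c_i\} \vee s$ is a non-degenerate triangle. Writing $s = [a,b]$, the two triangles $T_1 = \triangle(c_1, a, b)$ and $T_2 = \triangle(c_2, a, b)$ share the common edge $s$ and both lie in $P$ on the same side of $g$ (namely the side containing $\tilde{g}$, since $c_1$ and $c_2$ are both on $\tilde{g}$). Because $T_i \subseteq C_i \vee s$, it suffices to produce a $2$-dimensional subset of $T_1 \cap T_2$.

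The key step is a local argument at an interior point of the shared edge $s$. Fix any $x \in \relint(s)$ and let $H \subseteq P$ be the closed half-plane bounded by $g$ that contains $\tilde{g}$. The other two edges of $T_i$, namely $[a, c_i]$ and $[b, c_i]$, are at positive distance from $x$ (since $x \neq a, b$); let $r > 0$ be smaller than all four of these distances. Then the open disk $B_P(x, r)$ in $P$ meets the boundary of each $T_i$ only along $g$, so $B_P(x,r) \cap T_i = B_P(x, r) \cap H$ for $i=1,2$. Consequently the two-dimensional half-disk $B_P(x, r) \cap H$ lies in $T_1 \cap T_2$, which gives $\dim(T_1 \cap T_2) \geq 2$ and hence $\dim\bigl((C_1 \vee s) \wedge (C_2 \vee s)\bigr) \geq 2$. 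There is no substantial obstacle; the only care required is in the local analysis, which merely uses that the other two edges of each triangle are bounded away from a relative interior point of $s$.
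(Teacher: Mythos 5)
Your proof is correct. The paper states Proposition \ref{parallel-lines-two-dim-prop} without proof (it is one of the elementary preliminary facts closed with a box), so there is no argument of the authors to compare yours against; your two bounds are both sound — containment of both joins in the plane $\aff(g\cup\tilde{g})$ gives $\dim\leq 2$, and the half-disk centred at a point of $\relint(s)$, which avoids the edges $[a,c_i]$ and $[b,c_i]$ because each of those segments meets $g$ only in its endpoint on $g$, lies in both triangles and gives $\dim\geq 2$.
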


\begin{proposition}\label{quadrangle-prop}
Let $F$ be a plane and let $a_{1},a_{2},b_{1},b_{2}\in F$ be such that $(a_{1},b_{2})\cap (a_{2},b_{1}) = \{ z \}$ for some $z\in F$. Then for any $o\in F\setminus \{ z \}$ 
\begin{equation*}
(\exists e_{1}, e_{2}, w \in \{ a_{1}, a_{2}, b_{1}, b_{2} \})\ \hbox{ s.t. }\  e_{1}\neq w\neq e_{2} \hbox{ and } \triangle(e_{1},e_{2},o)\cap [w,o]\supsetneq \{ o \}. 
\end{equation*}
\end{proposition}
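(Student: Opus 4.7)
The plan is to reformulate the conclusion as a cone-containment condition and then distinguish cases according to the position of $o$. The key reformulation is that, for $w\neq o$, the intersection $\triangle(e_{1},e_{2},o)\cap [w,o]$ strictly contains $\{o\}$ if and only if $w-o$ belongs to the closed convex cone $C(e_{1},e_{2}):=\{\alpha(e_{1}-o)+\beta(e_{2}-o):\alpha,\beta\geq 0\}$. One direction: a point $p\neq o$ in the intersection yields $p-o=s(w-o)=\gamma_{1}(e_{1}-o)+\gamma_{2}(e_{2}-o)$ with $s>0$ and $\gamma_{1},\gamma_{2}\geq 0$, so $w-o$ is the required non-negative combination. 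Conversely, a cone representation lets one exhibit $o+t(w-o)$ inside the triangle for sufficiently small $t>0$. Writing $p_{1}=a_{1},p_{2}=a_{2},p_{3}=b_{1},p_{4}=b_{2}$ and $u_{i}:=p_{i}-o$, the hypothesis gives $\lambda,\mu\in(0,1)$ with $z-o=\lambda u_{1}+(1-\lambda)u_{4}=\mu u_{2}+(1-\mu)u_{3}\neq 0$.

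If $o$ coincides with a vertex, say $o=a_{1}$, then $u_{1}=0$ and the above relation reduces to $(1-\lambda)(b_{2}-o)=\mu(a_{2}-o)+(1-\mu)(b_{1}-o)$, placing $b_{2}-o$ inside $C(a_{2},b_{1})$; one takes $w=b_{2},e_{1}=a_{2},e_{2}=b_{1}$. If instead $o$ is not a vertex but two of the vectors $u_{i},u_{j}$ ($i\neq j$) are positive scalar multiples of one another, take $w=p_{i}$ and $e_{1}=e_{2}=p_{j}$: the cone $C(p_{j},p_{j})$ is the ray $\mathbb{R}_{\geq 0}u_{j}$, which contains $u_{i}$.

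The remaining case has four distinct rays $R_{i}:=\mathbb{R}_{\geq 0}u_{i}$ from $o$. Viewed as four distinct points on $S^{1}$, list them cyclically with consecutive arc-lengths $L_{1},L_{2},L_{3},L_{4}>0$ summing to $2\pi$. The middle ray of three cyclically consecutive rays lies strictly inside the two-dimensional wedge spanned by the two flanking rays precisely when the arc between the flanking rays going through the middle (of length $L_{j}+L_{j+1}$) is strictly less than $\pi$. Since $\sum_{j=1}^{4}(L_{j}+L_{j+1})=4\pi$, either at least one of these is strictly below $\pi$---yielding the desired triple $(w,e_{1},e_{2})$ with three distinct indices---or all four equal $\pi$.

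The main obstacle is to rule out this equality case. Here $L_{1}=L_{3}$, $L_{2}=L_{4}$, and $L_{1}+L_{2}=\pi$, so the rays split into two antipodal pairs, each of which encodes that $o$ lies on the open segment joining the two corresponding vertices. The three possible matchings of $\{a_{1},a_{2},b_{1},b_{2}\}$ into two pairs are the diagonal matching $\{(a_{1},b_{2}),(a_{2},b_{1})\}$---which would force $o\in(a_{1},b_{2})\cap(a_{2},b_{1})=\{z\}$, contradicting $o\neq z$---and the two edge-matchings $\{(a_{1},a_{2}),(b_{1},b_{2})\}$ and $\{(a_{1},b_{1}),(a_{2},b_{2})\}$, each consisting of two opposite edges of the convex quadrilateral with vertices $a_{1},a_{2},b_{2},b_{1}$ in cyclic order, which are necessarily disjoint. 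All three options contradict the hypotheses, so the equality case cannot occur and the proof concludes.
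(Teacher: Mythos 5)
Your proof is correct, and it follows a genuinely different route from the one the paper sketches. The paper's hint restricts attention to the four configurations in which $\{e_{1},e_{2}\}$ is one of the two diagonal pairs $\{a_{1},b_{2}\}$, $\{a_{2},b_{1}\}$ and $w$ is one of the two remaining vertices, defines for each the region $O_{i}$ of admissible $o$, and then verifies that these four regions cover $F\setminus\{z\}$. You instead translate the conclusion into the cone condition $w-o\in C(e_{1},e_{2})$, allow arbitrary witnesses (including $e_{1}=e_{2}$ and non-diagonal pairs), and settle the generic case by an averaging argument on the four arcs cut out on the unit circle by the directions $p_{i}-o$: since the four sums $L_{j}+L_{j+1}$ average to $\pi$, either some cyclically middle direction lies strictly inside the wedge of its two neighbours, or the four rays split into two antipodal pairs; the latter is excluded because the resulting perfect matching of $\{a_{1},a_{2},b_{1},b_{2}\}$ would force either $o=z$ (diagonal matching) or $o$ to lie on two disjoint opposite edges of the quadrilateral (edge matchings). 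This buys a cleaner, less case-heavy argument that avoids describing the regions $O_{i}$ explicitly; the price is only that your witnesses need not have the diagonal form the paper envisages, which is harmless since the proposition merely asserts existence. Two facts you use implicitly deserve a sentence each: the hypothesis forces the four points to be pairwise distinct and in convex position with cyclic order $a_{1},a_{2},b_{2},b_{1}$ (so that opposite closed edges are indeed disjoint); both follow from the observation that two open segments can meet in exactly one point only by crossing transversally at a relative interior point of each, in particular they cannot be collinear and no three of the endpoints can be collinear.
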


\begin{proof}
Hint: The Proposition is proved by distinguishing the cases:
\begin{gather*}
1.\ \ \ \{e_{1},e_{2}\} = \{ a_{1}, b_{2}\}\hbox{ and }w=a_{2}\ \ \ \ \ \ 2.\ \ \ \{e_{1},e_{2}\} = \{ a_{1}, b_{2}\}\hbox{ and }w=b_{1}\\
3.\ \ \ \{e_{1},e_{2}\} = \{ a_{2}, b_{1}\}\hbox{ and }w=a_{1}\ \ \ \ \ \ 4.\ \ \
\{e_{1},e_{2}\} = \{ a_{2}, b_{1}\}\hbox{ and }w=b_{2}
\end{gather*}
For each of these cases one defines a region $O_{i}$ (with $i$ indexing the case) by\linebreak $O_{i} := \{ o \mid \triangle(e_{1},e_{2},o)\cap [w,o]\supsetneq \{ o \} \}$. One then shows that $F\setminus \{ z \}= \bigcup_{i=1}^{4} O_{i}$.
\end{proof}

\begin{proposition}\label{ray hyperlane intersection existance gamma}
Given rays $r_i$ emanating from some point $o$ and parallel hyperplanes $G \neq H$ such that all rays intersect $G$ and $H$. Suppose further that $G$ and $o$ are located on the same side of $H$ and $o \notin G$ i.e. $G$ lies strictly between $H$ and $o$. Then $\exists \gamma \in (0,1)$ independent of the parameter $i$ such that for $\{ p_{i} \} := r_{i}\cap H$ we have $\gamma p_{i} + (1- \gamma)o \in G$.\hfill $\Box$
\end{proposition}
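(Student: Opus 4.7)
My plan is to reduce everything to a one-parameter question along each ray and observe that the relevant ratio depends only on $G$ and $H$, not on the direction of the ray.

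First, I would pick a unit normal $n$ common to the parallel hyperplanes $G$ and $H$ and write them as level sets $G=\{x\mid \langle n\mid x\rangle = g\}$ and $H=\{x\mid \langle n\mid x\rangle = h\}$. Replacing $n$ by $-n$ if necessary, I can arrange $\langle n\mid o\rangle < h$; the hypothesis that $G$ lies strictly between $o$ and $H$ then translates to $\langle n\mid o\rangle < g < h$. Parametrising each ray as $r_i=\{o+tv_i\mid t\ge 0\}$ (for some direction $v_i$), the intersection with a hyperplane $\{\langle n\mid x\rangle = c\}$ is given by $t = (c-\langle n\mid o\rangle)/\langle n\mid v_i\rangle$; in particular, the assumption that $r_i$ meets both $G$ and $H$ forces $\langle n\mid v_i\rangle>0$, and yields parameters
\begin{equation*}
t_G^{(i)} = \frac{g-\langle n\mid o\rangle}{\langle n\mid v_i\rangle},\qquad t_H^{(i)} = \frac{h-\langle n\mid o\rangle}{\langle n\mid v_i\rangle}.
\end{equation*}

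Now $p_i = o+t_H^{(i)}v_i$ and the point $q_i := o+t_G^{(i)}v_i$ is the intersection $r_i\cap G$. Setting $\gamma:=t_G^{(i)}/t_H^{(i)} = (g-\langle n\mid o\rangle)/(h-\langle n\mid o\rangle)$, I see that the factor $\langle n\mid v_i\rangle$ cancels, so $\gamma$ is the same for every $i$. Moreover, the inequality $\langle n\mid o\rangle<g<h$ gives $0<g-\langle n\mid o\rangle<h-\langle n\mid o\rangle$, so $\gamma\in(0,1)$. A direct computation gives $\gamma p_i+(1-\gamma)o = o+\gamma t_H^{(i)}v_i = o+t_G^{(i)}v_i = q_i\in G$, which is the required identity.

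There is no real obstacle here; the only thing one has to be careful about is the sign bookkeeping (choosing $n$ so that $h-\langle n\mid o\rangle$ and $\langle n\mid v_i\rangle$ have the correct sign), which is what makes the ratio lie strictly in $(0,1)$ rather than just being a scalar independent of $i$.
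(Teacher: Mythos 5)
Your proof is correct and is exactly the standard intercept-theorem computation that the paper intends by leaving this proposition unproved (the $\Box$ marks it as evident): parametrise each ray, note that the ratio $\gamma = (g-\langle n\mid o\rangle)/(h-\langle n\mid o\rangle)$ of the two intersection parameters is independent of the direction $v_i$, and check $\gamma\in(0,1)$ from $\langle n\mid o\rangle<g<h$. The sign bookkeeping you flag (forcing $\langle n\mid v_i\rangle>0$ from the fact that each ray meets both hyperplanes and $o\notin G\cup H$) is the only point of substance, and you handle it correctly.
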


\begin{notation}
We use $C\parallel D$ to express that $C$ and $D$ are parallel convex sets i.e. that $\aff(C)\subseteq \aff(D) + v$ or $\aff(D)\subseteq \aff(C) + v$ for some appropriate vector $v$. Note that $\parallel$ is not a transitive relation in general, but that $\parallel$ is transitive on the subspaces of convex sets of equal dimension.
\end{notation}

\begin{proposition}\label{Prop phi+ = phi- + v}
Let $G, H$ be parallel, distinct affine subspaces of $\mathbb{E}^{d}$ and let $\phi^{+}: \mathbb{E}^{c} \to G, \phi^{-}: \mathbb{E}^{c} \to H$ be such that 
\begin{equation*}
(\forall x,y\in {\mathbb E}^{c})\quad [\phi^{-}(x), \phi^{+}(x)]\ \parallel\ [\phi^{-}(y), \phi^{+}(y)].
\end{equation*}
Then there exists some $v \in \mathbb{E}^{d}$ such that $(\forall x \in \mathbb{E}^{c})\ \phi^{+}(x) = \phi^{-}(x) +v$. \hfill $\Box$
\end{proposition}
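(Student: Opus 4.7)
The plan is to translate the parallelism condition into a simple algebraic statement about the difference vectors $v_x := \phi^{+}(x) - \phi^{-}(x)$, and then exploit the rigidity that comes from $G$ and $H$ being \emph{distinct} parallel affine subspaces.

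First I would introduce the common direction (translation) subspace $V$ of $G$ and $H$ and fix basepoints $g_{0}\in G$, $h_{0}\in H$, so that $G=g_{0}+V$ and $H=h_{0}+V$. Every vector $v_{x}=\phi^{+}(x)-\phi^{-}(x)$ then lies in the affine coset $(g_{0}-h_{0})+V$, and I can write $v_{x}=(g_{0}-h_{0})+w_{x}$ with $w_{x}\in V$. The key observation is that, because $G\neq H$ forces $G\cap H=\emptyset$ for parallel subspaces, the vector $g_{0}-h_{0}$ does \emph{not} lie in $V$; in particular $v_{x}\neq 0$ for every $x$, so each segment $[\phi^{-}(x),\phi^{+}(x)]$ is non-degenerate and the parallelism hypothesis has genuine content.

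Next I would turn the hypothesis $[\phi^{-}(x),\phi^{+}(x)]\parallel [\phi^{-}(y),\phi^{+}(y)]$ into the statement that $v_{x}=\lambda v_{y}$ for some scalar $\lambda$. Substituting the decomposition above yields
\begin{equation*}
(g_{0}-h_{0})+w_{x}=\lambda(g_{0}-h_{0})+\lambda w_{y},
\end{equation*}
which rearranges to $(1-\lambda)(g_{0}-h_{0})=\lambda w_{y}-w_{x}\in V$. Since $g_{0}-h_{0}\notin V$, this forces $\lambda=1$, and then $w_{x}=w_{y}$, so $v_{x}=v_{y}$. Thus $v_{x}$ is independent of $x$, and setting $v:=v_{x}$ for any (hence every) $x\in\mathbb{E}^{c}$ gives $\phi^{+}(x)=\phi^{-}(x)+v$ as required.

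I do not anticipate any real obstacle here: the whole proposition reduces to the elementary observation that non-zero vectors lying in a common affine coset of $V$ which misses the origin cannot be non-trivial scalar multiples of one another. The only point requiring minor care is the non-degeneracy of the segments $[\phi^{-}(x),\phi^{+}(x)]$, which must be verified before invoking the definition of $\parallel$ for line segments.
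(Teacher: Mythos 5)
Your proof is correct; the paper states Proposition \ref{Prop phi+ = phi- + v} without proof (it is marked as elementary), and your argument --- decomposing $v_{x}=\phi^{+}(x)-\phi^{-}(x)$ as $(g_{0}-h_{0})+w_{x}$ with $w_{x}$ in the common direction space $V$ and observing that $g_{0}-h_{0}\notin V$ kills both the degenerate case and any proportionality factor $\lambda\neq 1$ --- is exactly the natural way to fill it in. One small caution: your assertion that ``parallel and distinct implies disjoint'' and your use of \emph{the} common direction space $V$ both tacitly assume $G$ and $H$ have equal dimension; under the paper's liberal definition of $\parallel$ (one affine hull contained in a translate of the other) this is an extra hypothesis, but it is clearly the intended reading (in the application $G$ and $H$ are parallel hyperplanes), and without it the statement would in fact be false, so your argument proves the proposition in the only sense in which it holds.
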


\begin{proposition}\label{affine-bijection-prop}
Let $F$ and $H$ be parallel distinct hyperplanes in ${\mathbb E}^{c+1}$ and let $o\in {\mathbb E}^{c+1}\setminus (F\cup H)$. Let $\psi:{\mathbb E}^{c}\to F$ be an affine bijection and let $\phi:{\mathbb E}^{c}\to H$ be such that $(\forall v\in {\mathbb E}^{c})\ \aff(\psi(v),\phi(v))\ni o$. Then $\phi$ is an affine bijection.
\end{proposition}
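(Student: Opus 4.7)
The plan is to realize $\phi$ as the composition of $\psi$ with the central projection from $o$ onto $H$, and then observe that this central projection is an affine bijection $F \to H$.

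First I would record that, because $F$ and $H$ are parallel but distinct hyperplanes, they are disjoint, so $\psi(v) \neq \phi(v)$ for every $v$; combined with $o \notin F \cup H$, this means that for each $v$ the three points $o$, $\psi(v)$, $\phi(v)$ are honestly situated on a well-defined line through $o$ that meets both $F$ and $H$.

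Next I would define the central projection $\pi \colon F \to H$ by taking $\pi(p)$ to be the unique intersection of $\aff(o,p)$ with $H$. To show this is well-defined and affine, I would pick a unit normal $n$ to $F$ (hence also to $H$), write $F = \{ x : \langle n | x\rangle = f\}$, $H = \{ x : \langle n | x\rangle = h\}$ with $f \neq h$, and set $\alpha := \langle n | o\rangle$; since $o \notin F \cup H$ we have $\alpha \neq f$ and $\alpha \neq h$. Parameterizing the line through $o$ and $p \in F$ as $o + t(p - o)$ and solving $\langle n | o + t(p-o)\rangle = h$ gives the constant value $t_{0} = (h-\alpha)/(f-\alpha) \neq 0$, so
\begin{equation*}
\pi(p) = (1-t_{0})\,o + t_{0}\,p,
\end{equation*}
which is manifestly an affine map. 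It is injective because $t_{0} \neq 0$, and the symmetric argument with the roles of $F$ and $H$ swapped shows that every point of $H$ is hit, so $\pi$ is an affine bijection.

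Finally I would observe that the hypothesis $\aff(\psi(v),\phi(v)) \ni o$ together with $\psi(v) \neq \phi(v)$ says precisely that $o$, $\psi(v)$, $\phi(v)$ are collinear; since $\phi(v) \in H$, the uniqueness in the definition of $\pi$ forces $\phi(v) = \pi(\psi(v))$. Thus $\phi = \pi \circ \psi$ is a composition of two affine bijections onto hyperplanes, and is therefore itself an affine bijection. There is no real obstacle here; the only thing to be careful about is handling the well-definedness of $\pi$ (using the hypothesis $o \notin F \cup H$ to exclude the degenerate parameter values) and using $F \cap H = \emptyset$ to rule out $\psi(v) = \phi(v)$.
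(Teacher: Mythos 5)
Your proof is correct and complete: the paper states Proposition \ref{affine-bijection-prop} without proof, and your argument via the central projection $\pi(p)=(1-t_{0})o+t_{0}p$ from $o$ is exactly the intended elementary justification, with the hypotheses $o\notin F\cup H$ and $F\cap H=\emptyset$ used in the right places to make $\pi$ well defined and to force $\phi=\pi\circ\psi$. Nothing is missing.
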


\begin{proposition}\label{affine-bijection-prop-parallel}
Let $F$ and $H$ be parallel distinct hyperplanes in ${\mathbb E}^{c+1}$ and let $g$ be a line that intersects $F$ as well as $H$. Let $\psi:{\mathbb E}^{c}\to F$ be an affine bijection and let $\phi:{\mathbb E}^{c}\to H$ be such that $(\forall v\in {\mathbb E}^{c})\ \aff(\psi(v),\phi(v))\parallel g$. Then $\phi$ is an affine bijection.
\end{proposition}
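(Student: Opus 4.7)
The plan is to realize $\phi$ as the composition $\pi\circ\psi$, where $\pi\colon F\to H$ is the parallel projection along $g$; since $\psi$ is an affine bijection by hypothesis, it then only remains to note that $\pi$ itself is an affine bijection. This is the parallel analogue of Proposition \ref{affine-bijection-prop}, where central projection from the point $o$ is replaced by parallel projection in the direction of $g$.

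First I would observe that because $F$ and $H$ are distinct parallel hyperplanes they are disjoint; in particular $g$ is contained in neither (a line contained in one of them could not meet the other), so being a line meeting both hyperplanes it must hit each of them transversally in a single point. Letting $u$ be a nonzero direction vector of $g$, this means $u$ does not lie in the common direction subspace of $F$ and $H$, and hence there is a unique $t_0\in\mathbb{R}\setminus\{0\}$ with $H = F + t_0 u$ (the scalar $t_0$ is independent of the chosen basepoint in $F$ because any discrepancy would yield a nonzero multiple of $u$ lying in the direction of $F$). The map $\pi\colon F\to H$ defined by $\pi(p) := p + t_0 u$ is then the restriction to $F$ of a translation of $\mathbb{E}^{c+1}$, hence an affine bijection from $F$ onto $H$, with inverse $q\mapsto q - t_0 u$.

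It then remains to show that $\phi(v) = \pi(\psi(v))$ for every $v\in\mathbb{E}^c$. Since $F\cap H = \emptyset$, the points $\psi(v)\in F$ and $\phi(v)\in H$ are distinct, so $\aff(\psi(v),\phi(v))$ is genuinely a line; the hypothesis that this line is parallel to $g$ forces $\phi(v) - \psi(v) = s u$ for some $s\in\mathbb{R}$, and the further condition $\phi(v)\in H = F + t_0 u$ combined with the transversality of $u$ to $F$ pins down $s = t_0$. Therefore $\phi = \pi\circ\psi$ is a composition of affine bijections, and thus itself an affine bijection. I do not anticipate any genuine obstacle here: the main content of the statement is simply that parallel projection between two parallel hyperplanes along a transversal direction is an affine isomorphism, and the role of $\psi$ is only to relabel the domain affinely.
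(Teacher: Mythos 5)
Your proof is correct, and it is exactly the argument the paper intends: the proposition is stated without proof among the elementary preliminaries, and your realization of $\phi$ as $\psi$ followed by the translation $p\mapsto p+t_0u$ (parallel projection from $F$ to $H$ along $g$), with the uniqueness of $s=t_0$ coming from the transversality of $u$ to the common direction space of $F$ and $H$, is the standard way to see it. All the small points you need (disjointness of $F$ and $H$, hence $\psi(v)\neq\phi(v)$, hence $\aff(\psi(v),\phi(v))$ is a genuine line with direction $u$) are handled.
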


\begin{definition}
Let $X\subseteq {\mathbb E}^{c}$. We say that $\phi:X\to {\mathbb E}^{d}$ preserves the order of points if
\begin{equation*}
(\forall x,y,z\in X)\quad y\in (x,z) \implies \phi(y)\in (\phi(x),\phi(z)) 
\end{equation*}
\end{definition}

\begin{definition}
We say that a family of points is collinear if the points of the family are located on some common line. Let $X\subseteq {\mathbb E}^{c}$. We say that $\phi:X\to {\mathbb E}^{d}$ is a collinear mapping if the image of any collinear family of points under $\phi$ is again collinear.
\end{definition}

\begin{proposition}\label{order-preserving=>collinear}
Let $X\subseteq {\mathbb E}^{c}$ and $\phi:X\to {\mathbb E}^{d}$. If $\phi$ preserves the order of points then $\phi$ is a collinear mapping.\hfill $\Box$
\end{proposition}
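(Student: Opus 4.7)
The plan is to show that any collinear family in $X$ is mapped to a collinear family in $\mathbb{E}^{d}$ by a point-by-point argument, after fixing two reference images to define a candidate line.

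First I would dispose of the trivial cases. Let $\{p_{i}\}_{i\in I}\subseteq X$ be a collinear family, all lying on a common line of $\mathbb{E}^{c}$. If all images $\phi(p_{i})$ coincide, they are trivially collinear, so I may assume that there exist indices $a,b\in I$ with $\phi(p_{a})\neq \phi(p_{b})$; in particular $p_{a}\neq p_{b}$. Define the candidate image line
\begin{equation*}
\ell := \aff(\phi(p_{a}),\phi(p_{b}))\subseteq {\mathbb E}^{d}.
\end{equation*}
It suffices to show that $\phi(p_{c})\in \ell$ for every $c\in I$.

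Next I would handle a general $c\in I$ by case analysis on the order of $p_{a}, p_{b}, p_{c}$ on their common line. If $p_{c}\in \{p_{a},p_{b}\}$ there is nothing to prove. Otherwise $p_{a}, p_{b}, p_{c}$ are three distinct collinear points in $\mathbb{E}^{c}$, and exactly one of them lies strictly between the other two, giving three subcases:
\begin{enumerate}
\item[(i)] $p_{c}\in (p_{a},p_{b})$: then by order preservation $\phi(p_{c})\in (\phi(p_{a}),\phi(p_{b}))\subseteq \ell$.
\item[(ii)] $p_{a}\in (p_{c},p_{b})$: then $\phi(p_{a})\in (\phi(p_{c}),\phi(p_{b}))$. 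Since this open segment is non-empty, $\phi(p_{c})\neq \phi(p_{b})$, and $\phi(p_{a}),\phi(p_{b}),\phi(p_{c})$ are three collinear points, two of which ($\phi(p_{a}),\phi(p_{b})$) span $\ell$; hence $\phi(p_{c})\in \ell$.
\item[(iii)] $p_{b}\in (p_{a},p_{c})$: symmetric to (ii), by interchanging the roles of $a$ and $b$.
\end{enumerate}
In each case $\phi(p_{c})\in \ell$, so the whole family $\{\phi(p_{i})\}_{i\in I}$ lies on $\ell$ and is collinear.

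The argument is quite routine; the only mildly subtle point is noting that the hypothesis $\phi(b)\in (\phi(a),\phi(c))$ in the definition of order preservation already forces $\phi(a)\neq \phi(c)$ (otherwise the open segment would be empty by the notational convention), which is what allows the three image points in cases (ii) and (iii) to span the line $\ell$ rather than collapse. No deeper obstacle arises, and the proof is complete once the three order cases above are verified.
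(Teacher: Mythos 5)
Your proof is correct: the paper states Proposition \ref{order-preserving=>collinear} without proof (the terminal $\Box$ marks it as routine), and your argument—fixing two points with distinct images to define the candidate line $\ell$, then running the three-way case analysis on which of $p_a,p_b,p_c$ lies between the other two—is precisely the standard verification one would supply. The observation that $\phi(y)\in(\phi(x),\phi(z))$ forces $\phi(x)\neq\phi(z)$ under the paper's convention $(a,b)=[a,b]\setminus\{a,b\}$ is indeed the one point worth making explicit, and you handle it correctly.
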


\begin{lemma}[Main theorem of affine geometry]\label{main-theorem-aff-geometry}
Let $c\geq 2$ and let $\phi:{\mathbb E}^{c}\to {\mathbb E}^{d}$ be a collinear injective mapping. Then $\phi$ is an affine bijection onto its image.
\end{lemma}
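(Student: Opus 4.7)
The plan is to reduce the statement to the classical (bijective) version of the fundamental theorem of affine geometry. For that reduction I need to show that $V:=\aff(\phi(\mathbb{E}^{c}))$ is a $c$-dimensional affine subspace of $\mathbb{E}^{d}$ and that $\phi(\mathbb{E}^{c})=V$; once this is done, $\phi:\mathbb{E}^{c}\to V$ is a collinear injective map between two $c$-dimensional affine spaces, so the classical result (whose hypothesis $c\geq 2$ is precisely what we have) immediately yields that $\phi$ is affine.

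For the upper bound on dimension I would prove by induction on $k$ that for every $k$-dimensional affine subspace $A\subseteq\mathbb{E}^{c}$ the image $\aff(\phi(A))$ has dimension at most $k$. The cases $k=0,1$ are immediate from collinearity. For the inductive step, write $A=\aff(\{p\}\cup B)$ for some $(k-1)$-flat $B\not\ni p$. Any $x\in A\setminus\{p\}$ either lies on a line through $p$ meeting $B$, or on the exceptional line through $p$ parallel to $B$. In the first case collinearity places $\phi(x)$ in $\aff(\{\phi(p)\}\cup\aff(\phi(B)))$, which by the inductive hypothesis has dimension at most $k$; the exceptional points are handled by replacing $p$ with a different base point not on the relevant parallel. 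Setting $k=c$ gives $\dim V\leq c$.

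For the lower bound I pick an affine basis $x_{0},\dots,x_{c}$ of $\mathbb{E}^{c}$ and argue that its images are affinely independent. If not, a minimal affinely dependent subset forces two edges of a 2-face of the corresponding simplex to map into a common line $L$; using $c\geq 2$ and the parallel-line techniques already present in the paper (cf.\ Propositions \ref{parallel-lines-two-dim-prop} and \ref{different-rays-two-dim-prop}) one concludes that the whole 2-face maps into $L$. Injectivity applied to two disjoint parallel lines in that 2-face then yields a contradiction: their images would be disjoint subsets of $L$, yet a transversal inside the 2-face would force image points from the two parallels to coincide under collinearity. Hence $\dim V=c$ and, because every line in $\mathbb{E}^{c}$ is mapped injectively onto a line of $V$, we in fact get $\phi(\mathbb{E}^{c})=V$.

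With $\phi:\mathbb{E}^{c}\to V$ now a collinear bijection between affine spaces of the same dimension $c\geq 2$, the classical fundamental theorem of affine geometry over $\mathbb{R}$ applies and gives that $\phi$ is affine; hence $\phi$ is an affine bijection from $\mathbb{E}^{c}$ onto $V=\phi(\mathbb{E}^{c})$. The main obstacle I anticipate is the lower-bound step: the interaction between injectivity and collinearity has to be combined carefully via the parallel-line arguments just sketched, whereas the upper-bound induction is a routine sweep and the final invocation of the classical theorem is standard.
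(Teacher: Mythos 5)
The paper does not actually prove this lemma; it cites Lenz \cite{Le58} for the non-surjective fundamental theorem of affine geometry. Your attempt to give a self-contained reduction to the classical \emph{bijective} version breaks down at exactly the two points where the non-surjective statement is genuinely harder, so the gaps are not cosmetic.

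First, the lower-bound step fails. You claim that if the whole $2$-face maps into a line $L$, then injectivity plus collinearity yields a contradiction via two disjoint parallel lines and a transversal. But there is no contradiction there: the images of the two parallels are merely two disjoint subsets of $L$, and the transversal's image is a third subset of $L$ meeting each of them in one point --- all perfectly consistent. Indeed no argument using only injectivity and collinearity (in the paper's sense, i.e.\ collinear families map to collinear families) can rule out the image lying in a line: any set-theoretic injection of ${\mathbb E}^{c}$ into a line is automatically a collinear injective mapping, since every subset of a line is collinear. So the affine independence of $\phi(x_{0}),\dots,\phi(x_{c})$ cannot be extracted from these hypotheses by elementary position arguments; it is part of what Lenz's theorem (under its actual hypotheses) delivers. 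Propositions \ref{parallel-lines-two-dim-prop} and \ref{different-rays-two-dim-prop} are of no help here: they concern convex bodies and joins in the target, not images of non-convex point sets under an arbitrary injection. Second, the surjectivity step is asserted, not proved: ``every line in ${\mathbb E}^{c}$ is mapped injectively \emph{onto} a line of $V$'' does not follow from collinearity, which only gives that a line maps \emph{into} a line (consider $t\mapsto\arctan t$ on a single line). Without $\phi({\mathbb E}^{c})=V$ you cannot invoke the classical bijective fundamental theorem at all, and establishing that identity is essentially equivalent to proving $\phi$ affine in the first place. Your upper-bound induction is fine (modulo the remark that the exceptional set in the inductive step is a $(k-1)$-flat, not a line), but the other two steps are where the substance of Lenz's result lives, and neither survives scrutiny. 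The honest options are to cite \cite{Le58} as the paper does, or to prove the lemma only under the additional order-preservation hypothesis of Corollary \ref{corollary-aff-geometry}, which is all the paper ever uses and which does exclude the image collapsing into a line.
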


\begin{proof}
For a proof consult \cite{Le58}.
\end{proof}

\begin{corollary}\label{corollary-aff-geometry}
Let $c\geq 2$ and let $\phi:{\mathbb E}^{c}\to {\mathbb E}^{d}$ be an injective mapping that preserves the order of points. Then $\phi$ is an affine bijection onto its image.
\end{corollary}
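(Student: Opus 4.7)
The plan is essentially to chain together the two immediately preceding results, so the proof will be very short. First I would invoke Proposition \ref{order-preserving=>collinear}: since $\phi$ preserves the order of points by hypothesis, it automatically sends collinear families to collinear families, i.e.\ $\phi$ is a collinear mapping. Then, because $c\geq 2$ and $\phi$ is assumed injective, the hypotheses of Lemma \ref{main-theorem-aff-geometry} (the main theorem of affine geometry) are met, and the conclusion is exactly that $\phi$ is an affine bijection onto its image.

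There is no real obstacle here: the corollary is just the composition of Proposition \ref{order-preserving=>collinear} (order-preservation implies collinearity) with Lemma \ref{main-theorem-aff-geometry} (injective collinear maps are affine). The only mild subtlety worth noting in the write-up is why $c\geq 2$ is needed, namely to exclude the one-dimensional degenerate case in which collinearity is vacuous and non-affine bijections of the line abound. Since $c\geq 2$ is exactly the hypothesis of the invoked lemma, no extra work is required, and the corollary follows in one line.
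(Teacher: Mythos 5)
Your proposal is correct and is exactly the paper's own argument: apply Proposition \ref{order-preserving=>collinear} to get collinearity, then invoke Lemma \ref{main-theorem-aff-geometry} using injectivity and $c\geq 2$. Nothing further is needed.
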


\begin{proof}
The corollary is an immediate consequence of Proposition \ref{order-preserving=>collinear} and Lemma \ref{main-theorem-aff-geometry}.
\end{proof}

\begin{proposition}\label{preservation-of-point-order-prop}
Let $F\subseteq {\mathbb E}^{d}$ be a hyperplane, let $V\subseteq {\mathbb E}^{c}$ be an arbitrary set, let $(r_{v})_{v\in V}$ be a family of rays emanating from some common point $o\in {\mathbb E}^{d}\setminus F$ such that $v\neq w\ \Rightarrow r_{v}\neq r_{w}$. Let $(\Phi(v))_{v\in V}$ be a family of convex bodies in ${\mathbb E}^{d}$ such that $\Phi(v)\subset r_{v}\setminus \{  o \}$ and $\Phi(v)\cap F = \{ p_{v} \}$ for some $p_{v}\in {\mathbb E}^{d}$. Suppose further that $v\in (x,y) \Rightarrow \Phi(v) \subseteq \Phi(x) \vee \Phi(y)$. Then the function $\psi:V\to {\mathbb E}^{d}$ given by $\psi(v)=p_{v}$ preserves the order of points.   
\end{proposition}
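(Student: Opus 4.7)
The plan is, given $x, y, v \in V$ with $v \in (x, y)$, to prove $p_v \in (p_x, p_y)$ by localizing the entire picture into a single $2$-plane in which a short convex-geometry calculation pins down the intersection with $F$. The hypothesis immediately yields $\Phi(v) \subseteq \conv(\Phi(x) \cup \Phi(y))$, hence $p_v \in \conv(\Phi(x) \cup \Phi(y))$; and the injectivity $v \mapsto r_v$ makes the rays $r_x, r_y, r_v$ pairwise distinct.

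The first key observation I would establish is that $r_x$ and $r_y$ cannot be opposite rays on a common line $\ell'$ through $o$: otherwise $\ell'$ would be neither contained in $F$ (since $o \notin F$) nor parallel to $F$ (since $p_x \in \ell' \cap F$), so $\ell' \cap F$ would be a single point, forcing $p_x = p_y$ and hence $p_x \in (r_x \cap r_y) \setminus \{o\} = \emptyset$, absurd. Consequently $p_x - o$ and $p_y - o$ are linearly independent, so $P := \aff(o, p_x, p_y)$ is a $2$-plane containing $r_x \cup r_y$ and therefore $\Phi(x) \cup \Phi(y)$ together with their convex hull. Since $p_v \in P \setminus \{o\}$, the ray from $o$ through $p_v$ already lies in $P$ and must coincide with $r_v$. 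Moreover $o \notin F$ while $p_x \ne p_y$ are two points of $P \cap F$, so $\ell := P \cap F$ is exactly the line through $p_x$ and $p_y$, and $p_v \in \conv(\Phi(x) \cup \Phi(y)) \cap \ell$.

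The central computation is then $\conv(\Phi(x) \cup \Phi(y)) \cap \ell = [p_x, p_y]$. The inclusion $\supseteq$ is trivial. For the other direction I would write any $z$ in the intersection as $z = \lambda u + (1 - \lambda) w$ with $u \in \Phi(x)$, $w \in \Phi(y)$, $\lambda \in [0,1]$, and parametrize $u = o + \alpha (p_x - o)$, $w = o + \beta (p_y - o)$ with $\alpha, \beta > 0$ (using $\Phi(x) \subset r_x \setminus \{o\}$ and $\Phi(y) \subset r_y \setminus \{o\}$). Writing $z = s p_x + (1 - s) p_y$ and comparing coefficients in the basis $(p_x - o, p_y - o)$ of the $2$-dimensional vector space $P - o$ gives $s = \lambda \alpha \ge 0$ and $1 - s = (1 - \lambda) \beta \ge 0$, so $s \in [0, 1]$ and $z \in [p_x, p_y]$.

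To finish, since two distinct rays from $o$ share only $\{o\}$, the pairwise distinctness of $r_v, r_x, r_y$ prevents $p_v \in r_v \setminus \{o\}$ from equalling $p_x$ or $p_y$; combined with $p_v \in [p_x, p_y]$ this gives $p_v \in (p_x, p_y)$, i.e.\ $\psi(v) \in (\psi(x), \psi(y))$, which is the required order-preservation. The only genuinely subtle step is the opposite-rays exclusion, which is the unique place where the assumption $o \notin F$ is essential; the remainder is bookkeeping within the $2$-plane $P$.
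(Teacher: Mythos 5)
Your proof is correct and takes essentially the same route as the paper's: the paper's one-line argument rests on the identity $(\Phi(x)\vee\Phi(y))\cap F=[\psi(x),\psi(y)]$ together with the disjointness of $\Phi(v)$ from $\Phi(x)$ and $\Phi(y)$, and your two-plane computation is precisely a careful verification of that identity (with the strictness $p_v\neq p_x,p_y$ obtained, as in the paper, from the fact that distinct rays from $o$ meet only in $o$). Nothing further is needed.
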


\begin{proof}
Let $v\in (x,y)$. Then $\psi(v)\in \Phi(v)\cap F\subseteq (\Phi(x)\vee \Phi(y))\cap F = [ \psi(x),\psi(y)]$. Since $\Phi(v)\cap \Phi(x) = \Phi(v)\cap \Phi(y) = \emptyset$ and thus $\psi(x)\neq \psi(v)\neq \psi(y)$ we obtain that $\psi(v)\in (\psi(x),\psi(y))$ i.e. $\psi$ preserves the order of points.
\end{proof}

\begin{proposition}\label{preservation-of-point-order-prop-prallel}
Let $F\subseteq {\mathbb E}^{d}$ be a hyperplane, let $V\subseteq {\mathbb E}^{c}$ be an arbitrary set, let $(g_{v})_{v\in V}$ be a family of parallel lines such that $v\neq w\ \Rightarrow g_{v}\neq g_{w}$. Let $(\Phi(v))_{v\in V}$ be a family of convex bodies in ${\mathbb E}^{d}$ such that $\Phi(v)\subset g_{v}$ and $\Phi(v)\cap F = \{ p_{v} \}$ for some $p_{v}\in {\mathbb E}^{d}$. Suppose further that $v\in (x,y) \Rightarrow \Phi(v) \subseteq \Phi(x) \vee \Phi(y)$. Then the function $\psi:V\to {\mathbb E}^{d}$ given by $\psi(v)=p_{v}$ preserves the order of points.
\end{proposition}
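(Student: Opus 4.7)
The plan is to mirror the argument given for Proposition \ref{preservation-of-point-order-prop} almost verbatim, with the geometric role of ``distinct rays emanating from a common point'' taken over by ``distinct parallel lines''. Given $v \in (x, y)$ with $x, y, v \in V$, the hypotheses yield $\psi(v) \in \Phi(v) \cap F \subseteq (\Phi(x) \vee \Phi(y)) \cap F$, so the proof reduces to establishing the identity $(\Phi(x) \vee \Phi(y)) \cap F = [\psi(x), \psi(y)]$ together with the strictness $\psi(v) \neq \psi(x)$ and $\psi(v) \neq \psi(y)$.

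For the identity, the inclusion $\supseteq$ is immediate from convexity together with $\psi(x), \psi(y) \in F$. For $\subseteq$, I would write an arbitrary element of $\Phi(x) \vee \Phi(y)$ as $\lambda a + (1 - \lambda) b$ with $a \in \Phi(x) \subseteq g_x$, $b \in \Phi(y) \subseteq g_y$, and parametrise $a = \psi(x) + s u$ and $b = \psi(y) + t u$ where $u$ is a common direction vector of the parallel lines $g_x, g_y$. Then
\[
\lambda a + (1 - \lambda) b = \lambda \psi(x) + (1 - \lambda) \psi(y) + \bigl(\lambda s + (1 - \lambda) t\bigr) u.
\]
Provided $u$ is not parallel to $F$, membership of the left side in $F$ forces the coefficient of $u$ to vanish, placing the point in $[\psi(x), \psi(y)]$. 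In the degenerate case $u \parallel F$, the condition $\psi(x) \in g_x \cap F$ forces $g_x \subseteq F$, hence $\Phi(x) \subseteq g_x \cap F = \{ \psi(x) \}$, and symmetrically $\Phi(y) = \{ \psi(y) \}$, so that $\Phi(x) \vee \Phi(y) = [\psi(x), \psi(y)]$ trivially.

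The remaining step is to rule out $\psi(v) \in \{ \psi(x), \psi(y) \}$. Here the parallel-lines hypothesis plays the role that the rays-from-$o$ hypothesis plays in Proposition \ref{preservation-of-point-order-prop}: since $v, x, y$ are pairwise distinct (because $v \in (x,y)$ excludes $v$ from the endpoints and requires $x \neq y$), the hypothesis $v \neq w \Rightarrow g_v \neq g_w$ makes $g_v, g_x, g_y$ pairwise distinct parallel lines and hence pairwise disjoint; the same then holds for $\Phi(v), \Phi(x), \Phi(y)$. Therefore $\psi(v) \in \Phi(v)$ can coincide with neither $\psi(x) \in \Phi(x)$ nor $\psi(y) \in \Phi(y)$, and $\psi(v) \in (\psi(x), \psi(y))$ follows. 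I do not anticipate a genuine obstacle: the only real bookkeeping concerns the degenerate case $g_x \parallel F$, and even there the proposition's hypothesis $\Phi(x) \cap F = \{ p_x \}$ collapses the situation to singletons for which the conclusion is immediate.
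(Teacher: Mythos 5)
Your proposal is correct and follows essentially the same route as the paper, which simply reuses the two-line argument of Proposition \ref{preservation-of-point-order-prop}: place $\psi(v)$ in $(\Phi(x)\vee\Phi(y))\cap F=[\psi(x),\psi(y)]$ and rule out the endpoints via disjointness of $\Phi(v),\Phi(x),\Phi(y)$. The only difference is that you explicitly verify the identity $(\Phi(x)\vee\Phi(y))\cap F=[\psi(x),\psi(y)]$ (including the degenerate case $g_x\parallel F$), which the paper leaves implicit; that verification is sound.
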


\begin{proof}
The proof is the same as that of Proposition \ref{preservation-of-point-order-prop}.
\end{proof}

\section{Elementary facts and Dimension arguments}\label{dimension-arguments}

\begin{proposition}\label{range-of-Phi-determined-by-image-of-one-point-sets}
Let $\Phi:{\mathscr C}({\mathbb E}^{c})\to {\mathscr C}({\mathbb E}^{d})$ be an arbitrary lattice homomorphism. Then 
\begin{equation*}
(\forall C\in {\mathscr C}({\mathbb E}^{c}))\quad \phi(C)\subseteq \conv(\{ \Phi(x) \mid x\in {\mathbb E}^{c}\})
\end{equation*}
and thus especially $\{ \Phi(x) \mid x\in {\mathbb E}^{c}\} \subseteq F$ with $F\subseteq {\mathbb E}^{d}$ some convex set (e.g. $F$ an affine space etc.) implies that $\Phi(C)\subseteq F$ for any $C \in {\mathscr C}({\mathbb E}^{c})$.   
\end{proposition}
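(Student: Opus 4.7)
The plan is to exploit two elementary facts. First, every lattice homomorphism is order-preserving (as noted in the preceding Remark), since $C\subseteq D$ is equivalent to $C\wedge D=C$ and $\Phi$ preserves $\wedge$. Second, every $C\in\mathscr{C}(\mathbb{E}^{c})$ is bounded, and is therefore contained in some (sufficiently large) $c$-simplex $S=v_{0}\vee v_{1}\vee\cdots\vee v_{c}$ in $\mathbb{E}^{c}$. Together these two facts reduce the general assertion to the case in which the argument of $\Phi$ is a \emph{finite} join of one-point sets, where the homomorphism property provides an explicit expression.

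Concretely, given $C\in\mathscr{C}(\mathbb{E}^{c})$, I would first choose a simplex $S\supseteq C$ with vertices $v_{0},\ldots,v_{c}\in \mathbb{E}^{c}$ (the case $C=\emptyset$ is automatic since $\emptyset\subseteq S$ for every $S$), and then use order-preservation to obtain $\Phi(C)\subseteq \Phi(S)$. Iterating the identity $\Phi(A\vee B)=\Phi(A)\vee \Phi(B)$, and recalling that $A\vee B=\conv(A\cup B)$ has been extended to arbitrary subsets of $\mathbb{E}^{d}$, one obtains
\[
\Phi(S)\;=\;\Phi(v_{0})\vee\cdots\vee\Phi(v_{c})\;=\;\conv\bigl(\Phi(v_{0})\cup\cdots\cup\Phi(v_{c})\bigr)\;\subseteq\;\conv\bigl(\{\Phi(x)\mid x\in\mathbb{E}^{c}\}\bigr),
\]
which delivers the main inclusion $\Phi(C)\subseteq \conv(\{\Phi(x)\mid x\in\mathbb{E}^{c}\})$.

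The ``thus especially'' consequence is then a one-line convexity argument: if $\Phi(x)\subseteq F$ for every $x\in\mathbb{E}^{c}$ with $F\subseteq\mathbb{E}^{d}$ convex, then $\bigcup_{x\in\mathbb{E}^{c}}\Phi(x)\subseteq F$, hence $\conv(\bigcup_{x}\Phi(x))\subseteq F$ by convexity of $F$, and the main inclusion gives $\Phi(C)\subseteq F$ for every $C\in\mathscr{C}(\mathbb{E}^{c})$.

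I do not expect any real technical obstacle; the only conceptual pitfall is the tempting but futile attempt to write $C$ itself as an infinite join $\bigvee_{x\in C}\{x\}$ and commute $\Phi$ with it, which is illegitimate because lattice homomorphisms need not preserve infinite joins. The simplex-enclosure trick sidesteps this difficulty by trading the exact representation of $C$ for an \emph{upper} bound $C\subseteq S$, where $S$ is a \emph{finite} join and the monotonicity of $\Phi$ is all that is needed.
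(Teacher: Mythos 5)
Your proof is correct and follows essentially the same route as the paper: enclose $C$ in a finite polytope (the paper uses a general polytope $P=x_{1}\vee\dots\vee x_{n}$, you use a simplex, which is immaterial), apply monotonicity of $\Phi$, and expand the finite join via the homomorphism property. Your explicit handling of $C=\emptyset$ and of the ``thus especially'' consequence only spells out details the paper leaves implicit.
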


\begin{proof}
Given $C\in {\mathscr C}({\mathbb E}^{c})$ let $P=x_{1}\vee\dots\vee x_{n}\subset {\mathbb E}^{c}$ be a polytope such that $C\subseteq P$. Then 
\begin{equation*}
\Phi(C)\subseteq \Phi(P)=\Phi(x_{1})\vee\dots\vee \Phi(x_{n})\subseteq \conv(\{ \Phi(x) \mid x\in {\mathbb E}^{c}\})
\end{equation*}
which proves the proposition.
\end{proof}

\begin{proposition}\label{injectivity-on-points-prop} Let $c\geq 2$ and let $\Phi:{\mathscr C}({\mathbb E}^{c})\to {\mathscr C}({\mathbb E}^{d})$ be a non-trivial lattice homomorphism. Then $x\mapsto \Phi(x)$ is an injection on the family of one-point sets and $(\forall x\in {\mathbb E}^{c})\ \Phi(\emptyset) \subsetneq \Phi(x)$ and thus in particular, $(\forall x \in 
{\mathbb{E}}^c) \,\, \Phi (x) \neq \emptyset $.
\end{proposition}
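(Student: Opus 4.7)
The plan is to first prove the strict containment $\Phi(\emptyset)\subsetneq \Phi(x)$ for every $x\in \mathbb E^c$, and then obtain injectivity on one-point sets as a short corollary. Write $E := \Phi(\emptyset)$. The non-strict inclusion $E\subseteq \Phi(x)$ is immediate, either from the order-preservation remark (since $\emptyset \subseteq \{x\}$) or by noting $\Phi(\emptyset) = \Phi(\emptyset\cap\{x\}) = \Phi(\emptyset)\cap \Phi(x)$.

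For the strict part I would argue by contradiction. Suppose the set
\begin{equation*}
S \;:=\; \{\, x\in\mathbb E^c : \Phi(x)=E\,\}
\end{equation*}
is non-empty and fix some $y_1\in S$. For an arbitrary point $w\in \mathbb E^c \setminus \{y_1\}$ I choose $x := 2w - y_1$, the reflection of $y_1$ through $w$, so that $w\in (x,y_1)$ and in particular $w\neq x$. From $\{w\}\subseteq [x,y_1]$ and order preservation,
\begin{equation*}
\Phi(w) \;\subseteq\; \Phi([x,y_1]) \;=\; \Phi(x)\vee \Phi(y_1) \;=\; \Phi(x)\vee E \;=\; \Phi(x),
\end{equation*}
where the last equality uses $E\subseteq \Phi(x)$ together with the convexity of $\Phi(x)$. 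On the other hand $\{w\}\cap\{x\}=\emptyset$ forces $\Phi(w)\cap \Phi(x) = \Phi(\emptyset) = E$. Combining the two displays gives $\Phi(w) = \Phi(w)\cap \Phi(x) = E$, so $w\in S$. Hence $S = \mathbb E^c$, i.e.\ $\Phi$ sends every one-point set to $E$. Proposition \ref{range-of-Phi-determined-by-image-of-one-point-sets} then forces $\Phi(C)\subseteq \conv(E) = E$ for every $C\in \mathscr C(\mathbb E^c)$, while order-preservation gives $E\subseteq \Phi(C)$, so $\Phi\equiv E$, contradicting non-triviality.

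Injectivity on one-point sets follows at once: if $x_1\neq x_2$ and $\Phi(x_1)=\Phi(x_2)=:A$, then
\begin{equation*}
A \;=\; \Phi(x_1)\cap \Phi(x_2) \;=\; \Phi(\{x_1\}\cap\{x_2\}) \;=\; \Phi(\emptyset) \;=\; E,
\end{equation*}
contradicting the strict inclusion $E\subsetneq \Phi(x_1)$ just established. The only non-obvious ingredient in the whole argument is the reflection-absorption step showing that $S$ is forced to be all of $\mathbb E^c$ as soon as it contains a single point; this is what converts the non-triviality hypothesis into a strict containment, and everything else is bookkeeping with the two homomorphism identities.
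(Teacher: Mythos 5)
Your proof is correct, and it is organized differently from the paper's. The paper argues in the opposite order: it first assumes two distinct points have equal images, deduces $\Phi(u)=\Phi(\emptyset)$ for some $u$, and then invokes Gruber's argument (summarized in a footnote) that the set $A=\{x \mid \Phi(x)=\Phi(\emptyset)\}$ is convex and, via a case distinction between $A$ being a singleton and $A$ containing two points, concludes that $A\neq\emptyset$ forces triviality; the second case uses a planar construction intersecting two segments $(x,u)$ and $(y,v)$, which is where the hypothesis $c\geq 2$ enters. Your reflection--absorption step replaces that entire case analysis: from a single $y_1$ with $\Phi(y_1)=\Phi(\emptyset)$ you place an arbitrary $w$ strictly between $y_1$ and its reflection $x=2w-y_1$, use $\Phi(w)\subseteq\Phi(x)\vee\Phi(y_1)=\Phi(x)$ and $\Phi(w)\wedge\Phi(x)=\Phi(\emptyset)$ to absorb $w$ into $S$, and conclude $S=\mathbb{E}^c$ in one stroke. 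This is essentially the computation the paper performs in its ``$A=\{p\}$'' case, but you have noticed that it already handles the general case, so no convexity of $A$, no case split, and in fact no use of $c\geq 2$ is needed for this proposition. The concluding steps (triviality via Proposition \ref{range-of-Phi-determined-by-image-of-one-point-sets}, and injectivity as a corollary of the strict inclusion) are sound and non-circular, since that proposition precedes this one and does not depend on it.
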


\begin{proof}
Indirect: Suppose that $ u,v \in \mathbb{E}^{c}$ were distinct points with $u \neq v$ and $\Phi(u) = \Phi(v)$, then we have 
\begin{equation*}
\Phi(u) = \Phi(v) = \Phi(u) \wedge \Phi(v) = \Phi(u \wedge v) = \Phi(\emptyset).
\end{equation*}
By \cite[Sections 2.2 and 2.3]{Gr91} we obtain from $(\exists u\in {\mathbb E}^{c})\ \Phi(u) = \Phi(\emptyset)$ that $\Phi$ is a trivial homomorphism.\footnote{For the sake of completeness we summarise the arguments of \cite[Sections 2.2 and 2.3]{Gr91}. Suppose that $A:=\{ x\in {\mathbb E}^{c} \mid \phi(x)=\phi(\emptyset)\}\neq \emptyset$. It is easily seen that $A$ is convex. Further we assume that $A\subsetneq {\mathbb E}^{c}$ since otherwise we would already know that $\Phi$ is trivial. If $A=\{ p \}$ for some $p\in {\mathbb E}^{c}$ then for $z\neq p$ and $y\in (p,z)$ we have $\Phi(y), \Phi(z)\supsetneq \Phi(\emptyset)=\Phi(p)$, thus $\Phi(y)\subseteq \Phi(p\vee z) =\Phi(z)$ and thus finally $\Phi(\emptyset)=\Phi(y\wedge z)=\Phi(y)\wedge \Phi(z) = \Phi(y)\neq\Phi(\emptyset)$ which is contradictory. Thus there exist $x,y\in A$ with $x\neq y$. Since $A\subsetneq {\mathbb E}^{c}$ is convex and $c\geq 2$ there exists further some $w\in {\mathbb E}^{c}\setminus A$ such that $\{ w \} = (x,u)\cap (y,v)$ for appropriate $u,v\in {\mathbb E}^{c}\setminus A$. Thus $\Phi(w)=(\Phi(x)\vee \Phi(u))\wedge (\Phi(y)\vee \Phi(v)) = (\Phi(u)\wedge \Phi(v))\vee \Phi(\emptyset) = \Phi(\emptyset)\vee \Phi(\emptyset) = \Phi(\emptyset)$ contradicting $w\notin A$.} \emph{Contradiction.}
\end{proof}

\begin{proposition}\label{x-notin-C-prop}
Let $c\geq 2$ and let $\Phi:{\mathscr C}({\mathbb E}^{c})\to {\mathscr C}({\mathbb E}^{d})$ be a non-trivial lattice homomorphism. Let $C\in {\mathscr C}({\mathbb E}^{c})$ and $x\in {\mathbb E}^{c}$ be such that $x\notin C$. Then $\Phi(x)\nsubseteq \Phi(C)$ and thus $\Phi(x\vee C)\setminus \Phi(C)\neq \emptyset$.
\end{proposition}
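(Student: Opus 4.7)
The plan is to argue by contradiction using the intersection axiom for $\Phi$ together with the preceding Proposition \ref{injectivity-on-points-prop}. Suppose $\Phi(x)\subseteq \Phi(C)$. Because $x\notin C$ we have $\{x\}\cap C=\emptyset$, that is $\{x\}\wedge C=\emptyset$ in the lattice ${\mathscr C}({\mathbb E}^{c})$. Applying $\Phi$ and using the homomorphism property yields
\begin{equation*}
\Phi(\emptyset)=\Phi(\{x\}\wedge C)=\Phi(x)\wedge \Phi(C)=\Phi(x)\cap \Phi(C)=\Phi(x),
\end{equation*}
where the last equality uses the standing assumption $\Phi(x)\subseteq \Phi(C)$. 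However, Proposition \ref{injectivity-on-points-prop} guarantees that for the non-trivial homomorphism $\Phi$ one has $\Phi(\emptyset)\subsetneq \Phi(x)$, contradicting $\Phi(x)=\Phi(\emptyset)$. Hence $\Phi(x)\nsubseteq \Phi(C)$.

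For the second assertion, pick any point $p\in \Phi(x)\setminus \Phi(C)$; such a point exists by what has just been shown. Since $\Phi$ is a lattice homomorphism, $\Phi(x\vee C)=\Phi(x)\vee \Phi(C)=\conv(\Phi(x)\cup \Phi(C))\supseteq \Phi(x)$, so $p\in \Phi(x\vee C)$, and in particular $p\in \Phi(x\vee C)\setminus \Phi(C)$, which is therefore non-empty.

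The proof is completely routine once Proposition \ref{injectivity-on-points-prop} is in hand; there is no genuine obstacle. The only point requiring a moment's thought is that the hypothesis $c\geq 2$ enters only indirectly, via the appeal to Proposition \ref{injectivity-on-points-prop}, whose statement in turn inherits this assumption from the arguments of Gruber recorded in its proof.
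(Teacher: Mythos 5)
Your proof is correct and follows essentially the same route as the paper: both reduce the claim to the identity $\Phi(x)\wedge\Phi(C)=\Phi(x\wedge C)=\Phi(\emptyset)\subsetneq\Phi(x)$ furnished by Proposition \ref{injectivity-on-points-prop}, your version merely phrasing it as a contradiction and spelling out the (immediate) second assertion.
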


\begin{proof}
Since $\Phi$ is a non-trivial homomorphism we obtain by application of Proposition \ref{injectivity-on-points-prop} that 
\begin{equation*}
\Phi(x)\wedge \Phi(C) = \Phi(x\wedge C)= \Phi(\emptyset) \subsetneq \Phi(x)
\end{equation*}
from which the result follows.
\end{proof}

\begin{proposition} \label{family-at-most-count.prop}
Let $\mathcal{G}$ be a family of pairwise disjoint subsets of $\mathbb{E}^{d}$ such that $(\forall G \in \mathcal{G})\ \interior(G) \neq \emptyset$.
Then $\mathcal{G}$ is (at most) countable.
\end{proposition}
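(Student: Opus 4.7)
The plan is to exploit second countability of $\mathbb{E}^{d}$, which (as the authors indicate in the introduction) is the engine behind Lemma~\ref{Dimlemma} and hence is thematically natural here. Concretely, I use the fact that $\mathbb{Q}^{d}$ is a countable dense subset of $\mathbb{E}^{d}$.

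First I fix the countable dense set $\mathbb{Q}^{d}\subseteq \mathbb{E}^{d}$. For each $G\in \mathcal{G}$, the hypothesis $\interior(G)\neq \emptyset$ says that $\interior(G)$ is a non-empty open subset of $\mathbb{E}^{d}$, so by density there exists a point $q_{G}\in \interior(G)\cap \mathbb{Q}^{d}$. Using an arbitrary choice, I define a map $\chi:\mathcal{G}\to \mathbb{Q}^{d}$ by $\chi(G):=q_{G}$.

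Next I verify injectivity of $\chi$. Suppose $G_{1},G_{2}\in \mathcal{G}$ with $G_{1}\neq G_{2}$. By the pairwise disjointness hypothesis, $G_{1}\cap G_{2}=\emptyset$. Since $\chi(G_{1})=q_{G_{1}}\in \interior(G_{1})\subseteq G_{1}$ and $\chi(G_{2})=q_{G_{2}}\in G_{2}$, we immediately obtain $\chi(G_{1})\neq \chi(G_{2})$. Thus $\chi$ embeds $\mathcal{G}$ into the countable set $\mathbb{Q}^{d}$, and consequently $\mathcal{G}$ is at most countable.

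There is, frankly, no real obstacle here: the statement is a textbook consequence of the separability of $\mathbb{E}^{d}$, and the only subtle point is the trivial one that pairwise disjointness of the $G$'s (not merely disjointness of their interiors) is exactly what forces distinct rational witnesses. I would include this proposition at this stage precisely to foreshadow the more substantive second-countability argument that will appear in Lemma~\ref{Dimlemma}, where the same principle (open sets in a second countable space form an at-most-countable disjoint family) is applied in a geometrically more delicate setting.
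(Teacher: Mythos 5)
Your proof is correct and is essentially the paper's argument: the paper simply invokes second countability of $\mathbb{E}^{d}$ (any pairwise disjoint family of non-empty open sets is countable), and your version spells out the standard witness-choosing argument via the countable dense set $\mathbb{Q}^{d}$, which is the same principle. (Minor remark: pairwise disjointness of the sets $G$ already forces pairwise disjointness of their interiors, so one could equally well run the argument on the interiors alone, as the paper implicitly does.)
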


\begin{proof}
Since $\mathbb{E}^{d}$ is second countable i.e. possesses a countable base for its topology, any disjoint family of open sets has to be countable.
\end{proof}

\begin{proposition}\label{Y-at-most-countable-prop}cf. \cite[Section 2.4]{Gr91}
Let $D \subset \mathbb{E}^{d}$ be a compact set and let $\mathcal{Y}$ be a family of non empty sets $Y$ such that $Y\subseteq \aff(D)$ and the family\linebreak $\{ (Y\vee D)\setminus D \mid Y\in \mathcal{Y}\}$ consists of pairwise disjoint non-empty sets. Then $\mathcal{Y}$ is at most countable.
\end{proposition}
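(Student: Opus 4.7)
The plan is to reduce the claim to Proposition~\ref{family-at-most-count.prop} applied inside the Euclidean subspace $\aff(D)$ (which is itself second countable), by exhibiting inside each set $(Y\vee D)\setminus D$ a non-empty subset that is relatively open in $\aff(D)$. Since the sets $(Y\vee D)\setminus D$ are pairwise disjoint, the exhibited open subsets will also be pairwise disjoint, and countability of $\mathcal{Y}$ will follow.

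First I would make the basic set-up observation that, by the hypothesis $Y\subseteq \aff(D)$, we have $Y\vee D=\conv(Y\cup D)\subseteq \aff(D)$, while $D\subseteq Y\vee D$ yields $\aff(Y\vee D)=\aff(D)$. Hence $Y\vee D$ is a non-empty convex subset of $\aff(D)$ whose affine hull equals $\aff(D)$, and consequently its relative interior $\relint(Y\vee D)$ is a non-empty subset of $\aff(D)$ that is open in the topology of $\aff(D)$.

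The main step --- and what I expect to be the crux of the argument --- is to show that $\relint(Y\vee D)\not\subseteq D$. I would argue this by contradiction: if $\relint(Y\vee D)\subseteq D$, then since $D$ is compact and hence closed, taking closures in $\aff(D)$ would give $\overline{\relint(Y\vee D)}\subseteq D$; but for any non-empty convex set $A$ one has the standard identity $\overline{\relint(A)}=\overline{A}\supseteq A$, and applying this with $A=Y\vee D$ yields $Y\vee D\subseteq D$, contradicting $(Y\vee D)\setminus D\neq \emptyset$. Hence $\relint(Y\vee D)\setminus D$ is non-empty; as the intersection of the relatively open set $\relint(Y\vee D)$ with the relatively open complement $\aff(D)\setminus D$ of the closed set $D$, it is itself open in $\aff(D)$ and is contained in $(Y\vee D)\setminus D$.

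To finish, the family $\{\,\relint(Y\vee D)\setminus D:Y\in\mathcal{Y}\,\}$ consists of pairwise disjoint non-empty open subsets of the second countable space $\aff(D)$, and any such family is at most countable (equivalently, one applies Proposition~\ref{family-at-most-count.prop} in $\aff(D)$, which is isometric to some $\mathbb{E}^{k}$ with $k=\dim D$). This yields at most countably many sets $\relint(Y\vee D)\setminus D$, and therefore at most countably many $Y\in\mathcal{Y}$.
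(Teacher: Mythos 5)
Your argument is correct and follows essentially the same route as the paper: both proofs exhibit a non-empty relatively open (in $\aff(D)$) subset of each $(Y\vee D)\setminus D$ and then invoke second countability of $\aff(D)$; you merely spell out, via the identity $\overline{\relint(A)}=\overline{A}$ and closedness of $D$, the step the paper states more tersely.
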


\begin{proof}
Let $F$ denote the affine hull of $D$ and let $int_{F}(Z)$ denote the relative interior of any set $Z \subseteq F$ with respect to $F$. Since $D$ is compact and thus closed, $Y\vee D$ is convex and $(Y\vee D)\setminus D\neq \emptyset$, therefore we have that
\begin{equation*}
int_{F}((Y\vee D) \setminus D) \neq \emptyset \label{eqn:relint}
\end{equation*}
Since $F$ is second countable the result follows.
\end{proof}

\begin{remark}
Note that Proposition \ref {Y-at-most-countable-prop} includes the case that $D=\emptyset$, in which it is trivially fulfilled.
\end{remark}

\begin{proposition}\label{dimprop}
Let $\Phi:{\mathscr C}({\mathbb E}^{c})\to {\mathscr C}({\mathbb E}^{d})$ be a non-trivial lattice homomorphism such that $\Phi(\emptyset)=\emptyset$ and let $c\geq 2$. Then for $C\in {\mathscr C}({\mathbb E}^{c})$ we have 
\begin{equation*}
\dim C\leq \dim \Phi(C)
\end{equation*}
and in particular $c \leq d$.
\end{proposition}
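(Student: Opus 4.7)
The plan is to use Radon's theorem as indicated in the introduction. For $C=\emptyset$ the statement is trivial since $\dim\emptyset=-1$ and $\Phi(\emptyset)=\emptyset$ by hypothesis. So fix a nonempty $C\in\mathscr{C}(\mathbb{E}^c)$, let $k=\dim C\geq 0$, and pick $k+1$ affinely independent points $x_0,\dots,x_k\in C$. Since $C\supseteq\conv\{x_0,\dots,x_k\}$ and $\Phi$ is order preserving, one has $\Phi(C)\supseteq\Phi(x_0)\vee\cdots\vee\Phi(x_k)$, so it suffices to show
\[
\dim\bigl(\Phi(x_0)\vee\cdots\vee\Phi(x_k)\bigr)\geq k.
\]

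The argument is by contradiction. Assume that the affine hull of $\Phi(x_0)\cup\cdots\cup\Phi(x_k)$ has dimension at most $k-1$. By Proposition \ref{injectivity-on-points-prop} (which needs $c\geq 2$ and $\Phi$ non-trivial), each $\Phi(x_i)$ is nonempty, so we may select points $y_i\in\Phi(x_i)$. Then $y_0,\dots,y_k$ are $k+1$ points lying in an affine space of dimension $\leq k-1$, so Radon's theorem supplies a partition of $\{0,\dots,k\}$ into two nonempty disjoint subsets $I$ and $J$ with
\[
\conv\{y_i:i\in I\}\cap \conv\{y_j:j\in J\}\neq\emptyset.
\]

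Now set $A:=\bigvee_{i\in I}x_i$ and $B:=\bigvee_{j\in J}x_j$. Because $x_0,\dots,x_k$ are affinely independent and $I\cap J=\emptyset$, the standard fact that disjoint faces of a simplex meet in the empty set gives $A\wedge B=\emptyset$. Applying $\Phi$ and using the homomorphism property together with $\Phi(\emptyset)=\emptyset$ yields
\[
\Phi(A)\cap\Phi(B)=\Phi(A\wedge B)=\Phi(\emptyset)=\emptyset.
\]
On the other hand, $\Phi(A)=\bigvee_{i\in I}\Phi(x_i)\supseteq\conv\{y_i:i\in I\}$ and likewise $\Phi(B)\supseteq\conv\{y_j:j\in J\}$, so by the Radon partition $\Phi(A)\cap\Phi(B)\neq\emptyset$, a contradiction. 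This proves $\dim\Phi(C)\geq\dim C$, and the special case $C=\mathbb{E}^c\cap[\text{any }c\text{-simplex}]$ then yields $c\leq d$.

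The only delicate point is verifying that one can cleanly invoke Radon; the rest is bookkeeping using the order preservation of $\Phi$ and the preservation of the empty intersection via $\Phi(\emptyset)=\emptyset$. I do not expect any serious obstacle beyond being careful that $\Phi(x_i)\neq\emptyset$, which is exactly what Proposition \ref{injectivity-on-points-prop} (and hence the hypothesis $c\geq 2$) ensures.
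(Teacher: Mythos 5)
Your proposal is correct and follows essentially the same route as the paper: choose affinely many independent points in $C$, pick a point in each (nonempty, by Proposition \ref{injectivity-on-points-prop}) image, apply Radon's theorem to the chosen points under the assumption that the image has too small a dimension, and derive a contradiction with $\Phi(\conv R)\wedge\Phi(\conv B)=\Phi(\emptyset)=\emptyset$ for the resulting partition of disjoint faces. The paper's proof is the same argument with only cosmetic differences in indexing.
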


\begin{proof}
Let $\dim C=m+1$, let $S\subseteq C$ be a set consisting of $m+2$ points in general position. We proceed indirect: Suppose that $\dim \Phi(C) \leq m$ and thus $\dim(\Phi(\conv( S)))\leq m$. By Radon's Theorem \cite[Section 3.2]{Gr07} and Proposition \ref{injectivity-on-points-prop} (recall that $\Phi$ is a non-trivial lattice homomorphism) there exist disjoint sets $R$ and $B$ such that $R\cup B=S$ and 
\begin{equation}\label{By-radnon-neq-emptyset}
\Phi(\conv(R))\wedge \Phi(\conv(B))= \left (\bigvee_{x\in R} \Phi(x)\right ) \wedge \left (\bigvee_{x\in B} \Phi(x)\right )\neq \emptyset.
\end{equation}
Since $S$ consists of points in general position we also obtain 
\begin{equation*}
\Phi(\conv(R))\wedge \Phi(\conv(B))=\Phi(\conv(R)\wedge \conv(B))=\Phi(\emptyset)=\emptyset.
\end{equation*} 
Contradicting (\ref{By-radnon-neq-emptyset}).   
\end{proof}

\begin{lemma} [Dimension Lemma] \label{Dimlemma}
Let $\Phi:{\mathscr C}({\mathbb E}^{c})\to {\mathscr C}({\mathbb E}^{d})$ be a non-trivial lattice homomorphism and $c\geq 2$. Then for $C\in {\mathscr C}({\mathbb E}^{c})$ we have
\begin{equation*}
\dim (C)\leq c-2\ \Longrightarrow\ \dim (\Phi(C)) \leq \dim (C) + d - c.
\end{equation*}
\end{lemma}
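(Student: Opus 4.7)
I argue by contradiction: suppose $n := \dim \Phi(C) \geq m+d-c+1$ where $m := \dim C \leq c-2$, and put $F := \aff(C)$, $H := \aff(\Phi(C))$. The plan is to exhibit an uncountable family of proper enlargements of $\Phi(C)$ inside $H$ that violates Proposition~\ref{Y-at-most-countable-prop}, and this will be carried out by downward induction on $m$ from the top case $m = c-2$ down to $m = -1$. To build the family, fix a reference point $p \in F$ (any $p \in \mathbb{E}^c$ if $C = \emptyset$) and a linear complement $V$ to the direction space of $F$ with $\dim V = c - m \geq 2$; for every one-dimensional linear subspace $L \subseteq V$ pick a non-zero $v_L \in L$ and set $x_L := p + v_L$, so that $L$ runs through the uncountable set of one-dimensional linear subspaces of $V$. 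An elementary affine argument, using $x_L \notin F$ and $V \cap (F-F) = \{0\}$, shows $(C\vee x_L) \cap F = C$ and $\aff(C\vee x_L) \cap \aff(C\vee x_{L'}) = F$ for $L \neq L'$, hence $(C\vee x_L) \cap (C\vee x_{L'}) = C$. Applying the homomorphism property yields $\Phi(C\vee x_L) \cap \Phi(C\vee x_{L'}) = \Phi(C)$ whenever $L \neq L'$, so the sets $\Phi(C\vee x_L) \setminus \Phi(C) = (\Phi(x_L) \vee \Phi(C)) \setminus \Phi(C)$, each nonempty by Proposition~\ref{x-notin-C-prop}, form a pairwise disjoint family.

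\textbf{Placing $\Phi(x_L)$ inside $H$.} In the inductive step $m \leq c-3$, the induction hypothesis applied to $C \vee x_L$ (of dimension $m+1 \leq c-2$) gives $\dim \Phi(C\vee x_L) \leq (m+1)+d-c = n$; combined with $\Phi(C) \subseteq \Phi(C\vee x_L)$ this forces $\aff(\Phi(C\vee x_L)) = H$ and hence $\Phi(x_L) \subseteq H$ for every $L$. In the base case $m = c-2$ one has $n \in \{d-1, d\}$: the case $n = d$ is automatic since then $H = \mathbb{E}^d$, while for $n = d-1$ I would split the parameters into \emph{thin} ($\Phi(x_L) \subseteq H$) and \emph{fat} ($\Phi(x_L) \not\subseteq H$) ones. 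For fat $L$ the body $\Phi(C\vee x_L)$ has dimension $d$, so $\interior(\Phi(C \vee x_L)) \setminus H$ is a nonempty open subset of $\mathbb{E}^d$; the intersection identity from the previous paragraph makes these open sets pairwise disjoint across distinct fat $L$, so Proposition~\ref{family-at-most-count.prop} confines the fat $L$ to a countable set. Thus in either subcase uncountably many $L$ remain thin.

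\textbf{Contradiction and main obstacle.} Setting $D := \Phi(C)$ and $\mathcal{Y} := \{\Phi(x_L) : L \text{ thin}\}$, every $\Phi(x_L)$ is nonempty (by Proposition~\ref{injectivity-on-points-prop}) and contained in $\aff(D) = H$, while the associated family $\{(\Phi(x_L) \vee D)\setminus D\}$ consists of pairwise disjoint nonempty sets; Proposition~\ref{Y-at-most-countable-prop} therefore forces $\mathcal{Y}$ to be at most countable, contradicting the uncountability of the thin parameter set and completing the induction. The hard part will be the base case $m = c-2$: the natural dimension bound on $\Phi(C\vee x_L)$ lies exactly one step beyond the lemma's own hypothesis and is therefore unavailable as an inductive input, which forces the thin/fat dichotomy above and a separate appeal to Proposition~\ref{family-at-most-count.prop} applied to genuinely open subsets of $\mathbb{E}^d$; this is the only place in the argument where second countability of the ambient $\mathbb{E}^d$, rather than merely of $\aff(\Phi(C))$, is essential.
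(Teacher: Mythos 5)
Your proof is correct and follows essentially the same route as the paper: an uncountable family of points off $\aff(C)$ whose joins with $C$ meet pairwise in $C$, combined with Propositions \ref{Y-at-most-countable-prop} and \ref{family-at-most-count.prop} (second countability), with the critical case being $\dim C = c-2$, $\dim\Phi(C)\in\{d-1,d\}$. The only difference is organizational: you run a downward induction on $\dim C$ with $c-2$ as the base case, whereas the paper pushes an alleged counterexample upward to dimension $c-2$ before applying the same two countability arguments.
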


\begin{remark} The proof of the Dimension Lemma is based on Propositions \ref{family-at-most-count.prop} and \ref{Y-at-most-countable-prop} i.e. on the topological fact that any real affine space possesses a countable base for its usual (euclidean) topology and thus that any family of disjoint open subsets of a real affine space is countable. Proposition \ref{Y-at-most-countable-prop} has already implicitly been used in \cite[Section 2.4]{Gr91}. However, the full power of this simple topological fact---demonstrated by the derivation of Lemma \ref{Dimlemma} below---has not been exploited before.
\end{remark}

\begin{remark} \label{Remark c geq 3}
Note that even in the case that $\dim C =1$ Lemma \ref{Dimlemma} only applies for spaces $\mathscr{C}(\mathbb{E}^{c})$ with $c \geq 3$. Thus, with exception of Lemma \ref{Gruberfalllemma}, Theorem \ref{Grubersatz} and some propositions, we consider homomorphisms $\Phi: \mathscr{C}(\mathbb{E}^{c}) \to \mathscr{C}(\mathbb{E}^{c+1})$ for $c \geq 3$ exclusively.
\end{remark}

\begin{proof}[Proof of Lemma \ref{Dimlemma}]
Indirect: Let $ C \in \mathscr{C}(\mathbb{E}^{c})$ be such that $ \dim C \leq c-2 $ and assume that 
\begin{equation} \label{Dimassumption}
\dim (\Phi(C)) > \dim (C) +d-c.\\
\end{equation}
Let $ L := \aff(C)$ and note that $\aff(\emptyset)=\emptyset$. Denote by $L^\perp $ the orthogonal complement of $L$ in ${\mathbb E}^{c}$ i.e. denote by $L^{\perp}\subseteq {\mathbb E}^{c}$ the maximal vector-space such that $L$ and $L^{\perp}$ are orthogonal i.e. let 
\begin{equation*}
L^{\perp}:=\{ y \in {\mathbb E}^{c} \mid x,x_{0}\in L\ \hbox{ implies that } \langle x-x_{0} | y\rangle = 0 \}.
\end{equation*}
Note that $\emptyset^{\perp} = \{ p \}^{\perp} = {\mathbb E}^{c}$ for any $p\in {\mathbb E}^{c}$. Denote further by $\mathbb{S}$ the unit sphere in $L^{\perp}$ with center $L\cap L^{\perp}$ in case that $L\neq\emptyset$ and denote by $\mathbb{S}$ the unit sphere of ${\mathbb E}^{c}$ with center $0$ in case that\footnote{Letting ${\mathbb S}$ denote the unit sphere in ${\mathbb E}^{c}$ is the only additional ingredient necessary to make our argument work in the seemingly exceptional case that $C$ is $(-1)$-dimensional i.e. in the case that $L:=\aff(C)=C=\emptyset$.}
$L=\emptyset$.\\ \\
Since $\dim C \leq c-2 $ one has $\dim L \leq c-2 $, hence $\dim L^\perp \geq 2$ and $ \dim \mathbb{S} \geq 1 $ and therefore we have:
\begin{align}\label{alg:Suncout}
\mathbb{S} \hbox{ is uncountable}. 
\end{align}
Since $x\in {\mathbb S}$ implies that $x\notin C$ we obtain by application of Proposition \ref{x-notin-C-prop} that
\begin{equation}\label{x-notin-S-implication}
\forall x\in {\mathbb S}\quad \Phi(x\vee C)\setminus \Phi(C)\neq \emptyset.
\end{equation}
Further 
\begin{equation*}
(\forall x,y \in \mathbb{S})\quad x \neq y\Rightarrow ( C \vee x ) \wedge ( C \vee y ) = C.
\end{equation*}
Since $\Phi$ is a lattice homomorphism the last formula implies 
\begin{equation*}
(\forall x,y \in \mathbb{S})\quad x \neq y\Rightarrow (\Phi(x)\vee \Phi(C))\wedge (\Phi(y)\vee \Phi(C)) = \Phi(C)
\end{equation*}
which we reformulate as
\begin{equation}\label{E-almost-disjoint}
(\forall x,y \in \mathbb{S})\quad x \neq y\Rightarrow ((\Phi(x)\vee \Phi(C))\setminus \Phi(C)) \wedge ((\Phi(y)\vee \Phi(C))\setminus \Phi(C)) = \emptyset.
\end{equation}
From (\ref{x-notin-S-implication}) and (\ref{E-almost-disjoint}) we obtain that $( (\Phi(x) \vee \Phi(C)) \setminus \Phi(C))_{x \in {\mathbb S}}$ is an indexed family of pairwise disjoint non-empty sets and thus by application of Proposition \ref{Y-at-most-countable-prop} with $D=\Phi(C)$ and ${\cal Y} = \{ \phi(x)\mid \Phi(x) \subseteq \aff(\Phi(C)) \}$ that 
\begin{equation}\label{F-countable}
F:= \lbrace x\in {\mathbb S} \mid \Phi(x) \subseteq \aff(\Phi(C)) \rbrace \hbox{ is (at most) countable.}
\end{equation} 
Further (\ref{F-countable}) implies that 
\begin{equation}\label{exists-x-in-S-dim->}
(\forall x\in {\mathbb S}\setminus F)\quad ( \dim \Phi(C\vee x) > \dim \Phi(C) ).
\end{equation}
Since ${\mathbb S}$ is by (\ref{alg:Suncout}) uncountable and $F$ is countable we obtain that
\begin{equation}\label{SF-uncountable}
{\mathbb S}\setminus F \hbox{ is uncountable (and in particular not empty)}. 
\end{equation}
By induction we obtain from (\ref{exists-x-in-S-dim->}) the existence of a set $C\in {\mathscr C}({\mathbb E}^{c})$ such that $\dim(C)=c-2$ and the assertions (\ref{E-almost-disjoint}),  (\ref{F-countable}), (\ref{SF-uncountable}), (\ref{exists-x-in-S-dim->}) and (\ref{Dimassumption}) still hold.\\ \\ 
Since $\dim(C) = c-2$ the hypothesis (\ref{Dimassumption}) specialises to $\dim \Phi(C) > d-2$ and thus it remains to distinguish the cases $\dim \Phi(C)=d$ and $\dim \Phi(C)=d-1$.\\ \\
The case $\dim \Phi(C)=d$ contradicts the conjunction of (\ref{exists-x-in-S-dim->}) and (\ref{SF-uncountable}) since the existence of some $x$ such that $\dim(\Phi(C\vee x))>d$ is impossible.\\ \\
In the case that $\dim (\Phi(C)) =d-1$ we obtain by (\ref{exists-x-in-S-dim->}) that
\begin{equation*}
(\forall x\in {\mathbb S}\setminus F)\quad (\dim \Phi(C \vee x) = d) 
\end{equation*}
and thus
\begin{equation*}
(\forall x\in {\mathbb S}\setminus F)\quad \interior( \Phi(C \vee x)\setminus \Phi(C))\neq \emptyset.
\end{equation*}
The last equation contradicts---together with (\ref{SF-uncountable}) and (\ref{E-almost-disjoint})---Proposition \ref{family-at-most-count.prop}.
\end{proof}

\begin{remark}
Although the dimension of convex sets under a homomorphism may rise in accordance with Lemma \ref{Dimlemma}, homomorphisms always preserve affine dependence in the following sense:
\end{remark}

\begin{proposition}\label{radon-affine-dependence-proposition}
Let $d\geq c\geq 2$ and let $\Phi:\mathscr{C}({\mathbb E}^{c})\to \mathscr{C}({\mathbb E}^{d})$ be a non-trivial lattice homomorphism. Let a $(c+2)$-point set $S\subseteq {\mathbb E}^{c}$ be given. Then there exists a $c$-dimensional affine subspace $H\subseteq {\mathbb E}^{d}$ such that $(\forall x\in S)\ \Phi(x)\cap H\neq \emptyset$.
\end{proposition}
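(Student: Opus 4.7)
The approach is to combine Radon's theorem with the lattice-homomorphism property to manufacture an affine dependence among suitably chosen points $p_x\in \Phi(x)$, which then forces the $p_x$ to lie in a common $c$-dimensional affine subspace. Since $|S|=c+2$ and $S\subseteq {\mathbb E}^{c}$, Radon's theorem first supplies disjoint non-empty sets $R,B$ with $R\cup B=S$ and $\conv(R)\cap \conv(B)\neq \emptyset$.

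Applying $\Phi$ and using that it is a lattice homomorphism gives
\[
\Bigl(\bigvee_{x\in R}\Phi(x)\Bigr)\wedge \Bigl(\bigvee_{x\in B}\Phi(x)\Bigr) \;=\; \Phi(\conv(R))\wedge \Phi(\conv(B)) \;=\; \Phi(\conv(R)\cap \conv(B)),
\]
and the right-hand side is non-empty, since for any $p\in \conv(R)\cap \conv(B)$ it contains the (non-empty, by Proposition \ref{injectivity-on-points-prop}) set $\Phi(p)$ by order-preservation of $\Phi$. Picking a point $q$ in the resulting intersection on the left and using the definition of $\vee$ as convex hull, I would write
\[
q \;=\; \sum_{x\in R}\lambda_x p_x \;=\; \sum_{x\in B}\mu_x p_x
\]
with $p_x\in \Phi(x)$ and $\lambda_x,\mu_x\geq 0$ satisfying $\sum_{x\in R}\lambda_x=\sum_{x\in B}\mu_x=1$. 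The regrouping of a Carath\'eodory-style decomposition into a single representative per $x\in S$ is legitimate because each $\Phi(x)$ is itself convex. Subtracting the two expressions yields the affine relation $\sum_{x\in R}\lambda_x p_x - \sum_{x\in B}\mu_x p_x = 0$ among the $c+2$ points $\{p_x : x\in S\}\subseteq {\mathbb E}^{d}$, whose coefficients sum to $1-1=0$ and of which at least one is strictly positive.

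Consequently $\{p_x : x\in S\}$ spans an affine subspace of dimension at most $c$; embedding this subspace in any $c$-dimensional affine subspace $H\subseteq {\mathbb E}^{d}$ (possible because $d\geq c$) gives $p_x\in \Phi(x)\cap H$ for every $x\in S$, as required. I expect the only delicate point to be the convex-combinatorial bookkeeping of the regrouping step; otherwise the argument is a direct translation of Radon's theorem through the homomorphism axiom together with the non-emptiness of $\Phi$ on points, and in particular it does not invoke the Dimension Lemma.
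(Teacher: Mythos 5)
Your proof is correct and follows essentially the same route as the paper: Radon's theorem on $S$, the homomorphism identity $\Phi(\conv(R))\wedge\Phi(\conv(B))=\Phi(\conv(R)\cap\conv(B))\neq\emptyset$ (non-empty via Proposition \ref{injectivity-on-points-prop}), and a choice of representatives $p_x\in\Phi(x)$ witnessing a point of the intersection. The paper phrases the final step as taking the affine hull of two affine subspaces $H_R$ and $H_B$ of dimensions $r-1$ and $b-1$ sharing a common point, which is just the geometric form of your explicit affine dependence relation; the two arguments are equivalent.
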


\begin{proof}
By Radon's theorem we decompose $S$ into an $r$-point set $R$ and a $b$-point set $B$ (i.e. $R\cap B=\emptyset$ and $R\cup B=S$) such that $\conv(R)\cap \conv(B)\neq \emptyset$. Let $x_{0}\in \conv(R)\cap \conv(B)$ and (recall that by Proposition \ref{injectivity-on-points-prop}  $\Phi (x_0) \ne \emptyset$) let $y_{0}\in \Phi(x_{0})$ be arbitrarily chosen. Since by finiteness of $R$
\begin{equation*}
y_{0}\in \Phi(\conv(R)) = \bigvee_{x\in R} \Phi(x)
\end{equation*}
we obtain that there exists an $(r-1)$-dimensional affine space $H_{R}\subseteq {\mathbb E}^{d}$ such that $y_{0}\in H_{R}$ and $(\forall x\in R)\ H_{R}\cap \Phi(x)\neq \emptyset$. Similarly we obtain a $(b-1)$-dimensional affine space $H_{B}\subseteq {\mathbb E}^{d}$ such that $y_{0}\in H_{B}$ and $(\forall x \in B)\ H_{B}\cap \Phi(x)\neq \emptyset$. Since $y_{0}\in H_{R}\cap H_{B}$ the affine hull $H_{0}$ of $H_{R}\cup H_{B}$ is of dimension $\leq r - 1 + s - 1 = c + 2 - 2 = c$. Further $H_{0}$ intersects all sets $\Phi(x)$ for $x\in S$. Since by hypothesis $c \le d$ the affine space $H_{0}$ can be extended to a $c$-dimensional affine space $H\subseteq {\mathbb E}^{d}$. Thus the proposition is proved.
\end{proof}

\begin{remark}
In case that $d=c+1$ and the images of one point sets under $\Phi$ are located on lines emanating from a common point $o\in {\mathbb E}^{d}$ we obtain the following geometric description. 
\end{remark}

\begin{lemma}\label{radon-affine-dependence-lemma}
Let $c\geq 2$ and let $\Phi:\mathscr{C}({\mathbb E}^{c})\to \mathscr{C}({\mathbb E}^{c+1})$ be a non-trivial lattice homomorphism. Suppose that there exists a point $o\in {\mathbb E}^{c}$ and for any $x\in {\mathbb E}^{c}$ a closed ray $r_{x}$ emanating from $o$ such that $x\neq y$ implies $r_{x}\neq r_{y}$ and
\begin{equation*}
(\forall x\in {\mathbb E}^{c})\quad \Phi(x)\subset r_{x}\setminus o.
\end{equation*}
Then
\begin{enumerate}[(i)]
\item there exists a hyperplane $F\subseteq {\mathbb E}^{c+1}$ such that\label{first-case}
\begin{equation*}
o\in F\quad \hbox{ and }\quad (\forall x\in {\mathbb E}^{c})\ \Phi(x)\subseteq F
\end{equation*}
\item or for any $(c+2)$-point set $S$ there exists a hyperplane $F_{S}\subseteq {\mathbb E}^{c+1}$ such that\label{second-case}
\begin{equation*}
o\notin F_{S}\quad \hbox{ and }\quad (\forall x\in S)\ \Phi(x)\cap F_{S}\neq\emptyset.
\end{equation*}
\end{enumerate}
If (\ref{first-case}) holds, then the sets $\Phi(x)$ are for any $x\in {\mathbb E}^{c}$ one-point sets.
\end{lemma}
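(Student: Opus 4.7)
The plan is to prove the dichotomy by contrapositive — showing that if (i) fails then (ii) holds for every $(c+2)$-point set — and to derive the concluding one-point claim from the endomorphism theorem of \cite{Gr91}. I first observe that the hypothesis forces $\Phi(\emptyset) = \emptyset$: by Proposition \ref{injectivity-on-points-prop} we have $\Phi(\emptyset) \subsetneq \Phi(x) \subseteq r_x \setminus \{o\}$ for every $x$, and pairwise distinct rays through $o$ intersect only at $o$, so $\Phi(\emptyset) \subseteq \bigcap_{x \in \mathbb{E}^c}(r_x \setminus \{o\}) = \emptyset$.

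Assume (i) fails and let $S = \{x_1, \dots, x_{c+2}\}$. Apply Proposition \ref{radon-affine-dependence-proposition} to obtain a hyperplane $H \subseteq \mathbb{E}^{c+1}$ meeting each $\Phi(x_i)$; if $o \notin H$ then $F_S := H$ witnesses (ii). Otherwise $o \in H$, and since every ray $r_{x_i}$ contains $o$ together with a point of $\Phi(x_i) \cap H$ distinct from $o$, the entire ray lies in $H$. Now exploit the failure of (i): choose $z \in \mathbb{E}^c$ with $r_z \not\subseteq H$. After reindexing I may assume the rays $r_{x_1}, \dots, r_{x_{c+1}}$ span $H$ (viewed as directions from $o$); this is possible whenever the $c+2$ rays together span $H$, and the subcase of non-spanning rays reduces to a lower-dimensional version of the same problem, handled analogously. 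Applying Proposition \ref{radon-affine-dependence-proposition} to $S' := \{x_1, \dots, x_{c+1}, z\}$ produces a hyperplane $H'$ meeting each $\Phi(x)$ for $x \in S'$. If $o \in H'$, then $r_z, r_{x_1}, \dots, r_{x_{c+1}} \subseteq H'$; since the latter $c+1$ rays span $H$, one forces $H' = H$ and therefore $r_z \subseteq H$, a contradiction. Hence $o \notin H'$.

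The main technical obstacle is now to promote $H'$, which meets $\Phi(x_1), \dots, \Phi(x_{c+1}), \Phi(z)$ while avoiding $o$, to a hyperplane $F_S$ that additionally meets $\Phi(x_{c+2})$. The $(c-1)$-dimensional trace $H' \cap H$ lies in $H$ and avoids $o$, and the plan is to combine it with a symmetric reapplication of the previous step (removing $x_j$ in place of $x_{c+2}$ for different indices $j$, and combining the resulting witnesses), together with the convexity freedom inside each $\Phi(x_i)$, to adjust $F_S$ so that it hits $\Phi(x_{c+2}) \subseteq r_{x_{c+2}} \subseteq H$ as well. This combinatorial combination of several outputs of Proposition \ref{radon-affine-dependence-proposition} is the most delicate ingredient of the argument.

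Finally, suppose (i) holds with witness hyperplane $F$. Proposition \ref{range-of-Phi-determined-by-image-of-one-point-sets} yields $\Phi(C) \subseteq F$ for every $C$, so after identifying $F$ with $\mathbb{E}^c$ the map $\Phi$ becomes a non-trivial lattice endomorphism of $\mathscr{C}(\mathbb{E}^c)$ with $\Phi(\emptyset) = \emptyset$. Proposition \ref{dimprop}, together with the $c$-dimensional codomain, makes this endomorphism dimension-preserving, so Gruber's endomorphism theorem (Theorem \ref{Grubersatz}) applies and yields an affine bijection $\mathbb{E}^c \to F$ inducing $\Phi$; in particular each $\Phi(x)$ is a single point, completing the proof.
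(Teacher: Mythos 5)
Your overall logical frame (proving $\neg$(i) $\Rightarrow$ (ii) instead of the paper's $\neg$(ii) $\Rightarrow$ (i)) is legitimate, and the opening reduction is fine: if the hyperplane $H$ from Proposition \ref{radon-affine-dependence-proposition} misses $o$ you are done, and if $o\in H$ then indeed every ray $r_{x_i}$ lies in $H$, so the failure of (i) supplies a $z$ with $r_z\nsubseteq H$. But the proof has a genuine gap exactly where you flag ``the most delicate ingredient'': you never actually produce the hyperplane $F_S$ avoiding $o$ that meets \emph{all} $c+2$ sets $\Phi(x_1),\dots,\Phi(x_{c+2})$. What your argument delivers is, for each index $j$, a hyperplane $H'_j$ with $o\notin H'_j$ meeting the $c+1$ sets $\Phi(x_i)$, $i\neq j$ (together with $\Phi(z)$); ``combining the resulting witnesses ... together with the convexity freedom inside each $\Phi(x_i)$'' is a plan, not an argument, and there is no generic way to merge several hyperplanes into one. (The step \emph{can} be completed: since all the segments lie in $H\cong{\mathbb E}^c$ and each $H'_j\cap H$ is a hyperplane of $H$ avoiding $o$ and meeting the $c+1$ segments indexed by $i\neq j$, the machinery of Appendix \ref{transversality-theorems} --- convexity of the pole sets $\iota({\cal H}_{\{\Phi(x_i)\}})$ plus Helly in dimension $c$ --- yields a common transversal of all $c+2$ segments avoiding $o$. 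But you would have to say this; as written the central claim is unproved.) The reindexing step is also not innocent: the $c+2$ rays need not span $H$, and ``reduces to a lower-dimensional version, handled analogously'' glosses over the fact that in the non-spanning case the identity $H'=H$ you rely on for the contradiction is no longer forced.

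For comparison, the paper avoids this combination problem entirely. It fixes a putative counterexample $S$ to (ii), forms $A_S=\bigcap_{H\in{\cal F}_S}H$ (which contains $o$ and, by the ray hypothesis, contains every $\Phi(x)$, $x\in S$), maximizes $\dim A_S$ over all such bad sets, and shows via a $(c+1)$-element core $S_0$ that \emph{every} $y\in{\mathbb E}^c$ satisfies $\Phi(y)\subseteq A_{S_{\max}}$ --- i.e.\ one bad set already forces the global containment (i). This sidesteps any need to exhibit an explicit transversal for a given $S$. Finally, in your last paragraph the appeal to Theorem \ref{Grubersatz} does give the one-point conclusion, but Proposition \ref{dimprop} only provides the lower bound $\dim C\leq\dim\Phi(C)$ and cannot make the induced endomorphism ``dimension-preserving''; the upper bound needed for points (that $\Phi(x)$ is $0$-dimensional) comes from the Dimension Lemma \ref{Dimlemma}, which is what the paper cites.
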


\begin{proof} Given a set $S\subset {\mathbb E}^{c}$ we let 
\begin{equation*}
{\cal F}_{S}:=\{ H \subset {\mathbb E}^{c+1} \mid H \hbox{ is a hyperplane and } (\forall x\in S) (\Phi(x)\cap H \neq \emptyset) \}.
\end{equation*}
We already know from Proposition \ref{radon-affine-dependence-proposition} that for any  $(c+2)$-point set $S\subset {\mathbb E}^{c}$ (and thus also for any set $S\subset {\mathbb E}^{c}$ consisting of viewer than $(c+2)$-points) ${\cal F}_{S}\neq \emptyset$.
Thus if (\ref{second-case}) is not fulfilled we obtain a $(c+2)$-point set $S\subseteq {\mathbb E}^{c}$ such that ${\cal F}_{S}\neq \emptyset$ and $(\forall H\in {\cal F}_{S})\ o\in H$. Hence the family 
\begin{equation*}
{\cal X} := \{ S\subseteq {\mathbb E}^{c} \mid S \hbox{ consists of $\leq c+2$ points, ${\cal F}_{S}\neq \emptyset$ and $(\forall H\in {\cal F}_{S})\ o\in H$} \}
\end{equation*}
is not empty and moreover its $(c+2)$-point elements are precisely the $(c+2)$-points sets $S\subseteq {\mathbb E}^{c}$ that do not fulfil (\ref{second-case}).\\ \\
We further obtain (remind that the sets $\Phi(x)$ are by hypothesis for any $x\in {\mathbb E}^{c}$ located on rays emanating from $o$) that 
\begin{equation*}
(\forall S\in {\cal X}) (\forall x\in S)(\forall H\in {\cal F}_{S})\ \Phi(x)\subset H.
\end{equation*}
We thus get for arbitrary $S\in {\cal X}$ letting $A_{S}:=\bigcap_{H\in {\cal F}_{S}} H$ that
\begin{equation}\label{forall x-in-set-implication}
(\forall x\in S)\ (\Phi(x)\subseteq A_{S} \hbox{ and } o\in A_{S}).
\end{equation}
We further remark that the sets $A_{S}$ are (for $S\in {\cal X}$) at most $c$-dimensional affine subspace of ${\mathbb E}^{c+1}$.\\ \\
Note that $S\mapsto \dim(A_{S})$ defines a function from ${\cal X}$ to ${\mathbb R}$. Let $S_{\max}\in {\cal X}$ be such that $S\mapsto \dim(A_{S})$ attains its maximum on ${\cal X}$ at $S_{\max}\in {\cal X}$. Further---by monotonicity of $S\mapsto dim(A_{S})$---we suppose without loss of generality that the set $S_{\max}$ consists of $(c+2)$-points. Thus $S_{\max}$ is a $(c+2)$-point subsets of ${\mathbb E}^{c}$ that does not fulfil (ii) and maximizes the function $S\mapsto dim(A_{S})$ on ${\cal X}$.\\ \\
Let $S_{0}\subset S_{\max}$ be a $(c+1)$-element subset of $S_{\max}$ such that $A_{S_{0}}=A_{S_{\max}}$.\footnote{Such a set $S_{0}$ always exists: The affine space $A_{S_{\max}}$ is at most $c$-dimensional. Thus there exists a $(c+1)$-point set $Q\subseteq\bigcup_{x\in S_{\max}} \Phi(x)$ such that $A_{S_{\max}}=\aff(Q)$. Let $S_{0}$ be an arbitrary $(c+1)$-point subset of $S_{\max}$ such that $Q\subseteq\bigcup_{x\in S_{0}} \Phi(x)$.}
Then for arbitrary $y\in {\mathbb E}^{c}\setminus S_{0}$ we have 
\begin{equation}\label{inclusions-of-the-A_S}
o\in A_{S_{\max}}=A_{S_{0}}\subseteq A_{S_{0} \cup \{ y\} }
\end{equation}
i.e. $o\in A_{S_{0} \cup \{ y\} }$ which implies that the $(c+2)$-point set $S_{0}\cup \{ y \}$ does not fulfil (ii). This further implies by maximality of the dimension of $A_{S_{\max}}$ and (\ref{inclusions-of-the-A_S}) that 
\begin{equation*}
dim(A_{S_{\max}}) = dim(A_{S_{0}}) \leq \dim( A_{S_{0}\cup \{ y\} }) \leq dim(A_{S_{\max}})
\end{equation*}
which by application of $(\ref{inclusions-of-the-A_S})$ and the fact that the sets $A_{S}$ are affine spaces implies that $A_{S_{\max}}= A_{S_{0} \cup \{ y\} }$.\\ \\
From $A_{S_{\max}}= A_{S_{0} \cup \{ y\} }$ and application of \ref{forall x-in-set-implication} with $S=S_{0}\cup \{ y \}$ we obtain that $\Phi(y)\subseteq A_{S_{\max}}$. By the arbitrary choice of $y\in {\mathbb E}^{c}$ we thus obtain 
\begin{equation}\label{Phi-y-subset-A_S...}
(\forall y\in {\mathbb E}^{c}\setminus S_{0})\  (\Phi(y)\subseteq A_{S_{\max}})
\end{equation}
Letting $S=S_{0}$ in (\ref{forall x-in-set-implication}) we obtain from (\ref{forall x-in-set-implication}) and (\ref{Phi-y-subset-A_S...}) that
\begin{equation*} 
(\forall y\in {\mathbb E}^{c})\  (\Phi(y)\subseteq A_{S_{\max}})
\end{equation*}
which together with $dim(A_{S})\leq c$ proves (\ref{first-case}).\\ \\
That in case (\ref{first-case}) the sets $\Phi(x)$ have to be one-point sets follows by an application of Proposition \ref{range-of-Phi-determined-by-image-of-one-point-sets} and the Dimension Lemma \ref{Dimlemma} since $\dim({\mathbb E}^{c})=\dim(F)$ implies that $0$-dimensional convex sets have to be mapped to $0$-dimensional sets (i.e. one-point sets are mapped to one-point sets). 
\end{proof}

\begin{remark}
The following is an analogue of Lemma \ref{radon-affine-dependence-lemma} for segments located on parallel lines (instead of rays emanating from some common point).
\end{remark}

\begin{lemma}\label{radon-affine-dependence-lemma-parallel}
Let $c\geq 2$ and let $\Phi:\mathscr{C}({\mathbb E}^{c})\to \mathscr{C}({\mathbb E}^{c+1})$ be a non-trivial lattice homomorphism and let $g\subset {\mathbb E}^{c+1}$ be a line. Suppose that for any $x\in {\mathbb E}^{c}$ there exists a line $g_{x}$ parallel to $g$ such that $x\neq y$ implies $g_{x}\neq g_{y}$ and
\begin{equation*}
(\forall x\in {\mathbb E}^{c})\quad \Phi(x)\subset g_{x}.
\end{equation*}
Then either
\begin{enumerate}[(i)]
\item there exists a hyperplane $F\subseteq {\mathbb E}^{c+1}$ such that\label{first-case}
\begin{equation*}
F\parallel g\quad \hbox{ and }\quad (\forall x\in {\mathbb E}^{c})\ \Phi(x)\subseteq F
\end{equation*}
\item or for any $(c+2)$-point set $S$ there exists a hyperplane $F_{S}\subseteq {\mathbb E}^{c+1}$ such that\label{second-case}
\begin{equation*}
F_{S}\nparallel g\quad \hbox{ and }\quad (\forall x\in S)\ \Phi(x)\cap F_{S}\neq\emptyset.
\end{equation*}
\end{enumerate}
If (\ref{first-case}) holds, then the sets $\Phi(x)$ are for any $x\in {\mathbb E}^{c}$ one-point sets.
\end{lemma}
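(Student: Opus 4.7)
The plan is to mimic the proof of Lemma \ref{radon-affine-dependence-lemma} almost verbatim, replacing the geometric condition ``$o\in H$'' everywhere by ``$H\parallel g$''. The single new elementary fact needed is: if $H\subset\mathbb{E}^{c+1}$ is a hyperplane parallel to $g$ and $g_x$ is a line parallel to $g$, then $H\cap g_x\neq\emptyset$ already forces $g_x\subseteq H$, because the direction of $g$ lies in the tangent space of $H$. In particular, whenever $H\in\mathcal{F}_S:=\{H:\,H\text{ hyperplane},\ (\forall x\in S)\,\Phi(x)\cap H\neq\emptyset\}$ happens to be parallel to $g$, the whole segment $\Phi(x)\subseteq g_x\subseteq H$ for every $x\in S$.

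First I would suppose that (\ref{second-case}) fails and, using Proposition \ref{radon-affine-dependence-proposition} to guarantee $\mathcal{F}_S\neq\emptyset$ whenever $|S|\le c+2$, introduce
\[
\mathcal{X}:=\bigl\{S\subseteq\mathbb{E}^c:\,|S|\le c+2,\ \mathcal{F}_S\neq\emptyset,\ (\forall H\in\mathcal{F}_S)\,H\parallel g\bigr\}
\]
together with $A_S:=\bigcap_{H\in\mathcal{F}_S}H$ for $S\in\mathcal{X}$. By the elementary observation above, $A_S\supseteq\bigcup_{x\in S} g_x$, so $A_S$ is a non-empty affine subspace of dimension at most $c$ which is itself parallel to $g$. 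Next I would select $S_{\max}\in\mathcal{X}$ with $|S_{\max}|=c+2$ maximising $\dim A_S$; this is possible because enlarging $S$ to $S'$ shrinks $\mathcal{F}_{S'}$, enlarges $A_{S'}\supseteq A_S$, and preserves membership in $\mathcal{X}$ (each $H\in\mathcal{F}_{S'}$ contains $A_{S'}$ and hence a line parallel to $g$), while Proposition \ref{radon-affine-dependence-proposition} keeps $\mathcal{F}_{S'}\neq\emptyset$. I would then choose a $(c+1)$-subset $S_0\subset S_{\max}$ with $A_{S_0}=A_{S_{\max}}$ exactly as in the footnote of the previous lemma.

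For arbitrary $y\in\mathbb{E}^c\setminus S_0$, the inclusion $A_{S_0\cup\{y\}}\supseteq A_{S_0}=A_{S_{\max}}$ shows that $A_{S_0\cup\{y\}}$ still contains a line parallel to $g$; hence every $H\in\mathcal{F}_{S_0\cup\{y\}}$ (which must contain $A_{S_0\cup\{y\}}$) is parallel to $g$, so $S_0\cup\{y\}\in\mathcal{X}$ and maximality of $\dim A_{S_{\max}}$ forces $A_{S_0\cup\{y\}}=A_{S_{\max}}$. Therefore $\Phi(y)\subseteq A_{S_{\max}}$ for every $y\in\mathbb{E}^c$, and any hyperplane $F\supseteq A_{S_{\max}}$ is automatically parallel to $g$ and contains all the $\Phi(y)$, giving (\ref{first-case}). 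To finish, assuming (\ref{first-case}), Proposition \ref{range-of-Phi-determined-by-image-of-one-point-sets} confines the entire range of $\Phi$ to $F\cong\mathbb{E}^c$, and the Dimension Lemma \ref{Dimlemma} applied to this restricted homomorphism (with $d$ replaced by $c$) yields $\dim\Phi(x)\le 0$ for every point $x$, i.e.\ $\Phi(x)$ is a singleton.

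The only genuinely new step beyond the argument for Lemma \ref{radon-affine-dependence-lemma} is the passage ``$A_S$ is parallel to $g$ $\Rightarrow$ every $H\in\mathcal{F}_S$ is parallel to $g$''; once one notes that each such $H$ contains $A_S$ and hence contains a line parallel to $g$, parallelism of $H$ to $g$ follows automatically. This is the main (though very mild) obstacle; with it in hand every other step of the previous proof carries over essentially unchanged.
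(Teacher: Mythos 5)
Your proof is correct and follows essentially the same route as the paper, which itself proves this lemma by repeating the proof of Lemma \ref{radon-affine-dependence-lemma} with ``$o\in H$'' (and its variants) replaced by ``$H\parallel g$''. The one genuinely new ingredient you identify --- that a hyperplane containing a line parallel to $g$ is parallel to $g$, so that $A_S\parallel g$ forces every $H\in\mathcal{F}_S$ to be parallel to $g$ and to contain each $g_x$ it meets --- is exactly what makes the paper's substitution table work, and your treatment of the maximality argument and of the final singleton claim (via Proposition \ref{range-of-Phi-determined-by-image-of-one-point-sets} and the Dimension Lemma) matches the original.
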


\begin{proof}
The lemma is proved completely analogous to Lemma \ref{radon-affine-dependence-lemma}. One just has to replace:\\ \\
\begin{tabular}{lll}
'$o\in H$` &\quad by &\quad '$g \parallel H$`\\
'$o\in A_{S}$` &\quad by &\quad '$g \parallel A_{S}$`\\
'$o\in A_{S_{\max}}$` &\quad by &\quad '$g \parallel A_{S_{\max}}$`\\
'$o\in A_{S_{0} \cup \{ y\} }$` &\quad by &\quad '$g \parallel A_{S_{0} \cup \{ y\} }$`\\
'located on rays emanating from $o$` &\quad by &\quad 'located on lines parallel to $g$`
\end{tabular}
$ $\\ \\
in the proof of Lemma \ref{radon-affine-dependence-lemma}.
\end{proof}

\section{The main Theorem}\label{section-main-theo}

\begin{remark}
Before stating our main result, Theorem \ref{Theorem Homomorphisms}, we investigate in Proposition \ref{Prop Bedingungen für Phi} some hypotheses ensuring that $\Phi: \mathscr{C}(\mathbb{E}^{c}) \to \mathscr{C}(\mathbb{E}^{d})$ fulfils $\Phi(C)=\bigcup_{x \in C} \Phi(x)\cup \Phi(\emptyset)$ and moreover that $\Phi$ is a lattice homomorphism. Together with Theorem \ref{Theorem Homomorphisms} this gives a further characterisation of lattice homomorphisms summarised in Corollary \ref{char-of-homo-corollary}
\end{remark}

\begin{proposition} \label{Prop Bedingungen für Phi}
Let $\Phi: \mathscr{C}(\mathbb{E}^{c}) \to \mathscr{C}(\mathbb{E}^{d})$ be a mapping such that for \mbox{$C\in \mathscr{C}(\mathbb{E}^{c})$} and $x,x_{1},\dots ,x_{n} \in \mathbb{E}^{c}$
\begin{enumerate} [(i)]
\item $x \in C \Rightarrow \Phi(x) \subseteq \Phi(C)$  \label{Bedingung i}
\item $x \notin C \Rightarrow \Phi(x) \cap \Phi(C) = \Phi(\emptyset)$ \label{Bedingung ii}
\item $\bigcup_{x \in \mathbb{E}^{c}} \Phi(x)$ is convex \label{Bedingung iii}
\item $C \subseteq x_1 \vee \dots \vee x_n \Rightarrow \Phi(C) \subseteq \Phi(x_1) \vee \dots \vee \Phi(x_n)$ \label{Bedingung iv}
\end{enumerate}
Then 
\begin{equation}\label{Phi(C)=union}
\forall C \in \mathscr{C}(\mathbb{E}^{c})\ \hbox{ we have }\ \Phi(C)=\bigcup_{x \in C} \Phi(x)\cup \Phi(\emptyset)
\end{equation}
and
\begin{equation}\label{Phi(C)=union-without-empty}
\forall C \in \mathscr{C}(\mathbb{E}^{c})\setminus \{ \emptyset \}\ \hbox{ we have }\ \Phi(C)=\bigcup_{x \in C} \Phi(x).
\end{equation}
Further $\Phi$ is a lattice homomorphism.
\end{proposition}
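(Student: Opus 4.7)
The plan is to first prove the representation formula (\ref{Phi(C)=union}), then deduce (\ref{Phi(C)=union-without-empty}), and finally derive the lattice homomorphism property from (\ref{Phi(C)=union}) together with (\ref{Bedingung iv}).

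For (\ref{Phi(C)=union}) I would split into two inclusions. The inclusion $\bigcup_{x\in C}\Phi(x)\cup \Phi(\emptyset)\subseteq \Phi(C)$ follows directly from (\ref{Bedingung i}) for the pointwise union; for the claim $\Phi(\emptyset)\subseteq \Phi(C)$, the case $C=\emptyset$ is trivial, and when $C\neq \emptyset$ one picks $x\in C$ and any $x'\neq x$, observes by (\ref{Bedingung ii}) that $\Phi(x)\cap \Phi(x')=\Phi(\emptyset)$, hence $\Phi(\emptyset)\subseteq \Phi(x)\subseteq \Phi(C)$. For the reverse inclusion I would exploit that $C$, being compact, sits inside some polytope $x_1\vee\dots\vee x_n$, so by (\ref{Bedingung iv}) we have $\Phi(C)\subseteq \Phi(x_1)\vee\dots\vee \Phi(x_n)$. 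By (\ref{Bedingung iii}) the set $\bigcup_{x\in {\mathbb E}^{c}}\Phi(x)$ is convex and contains each $\Phi(x_i)$, hence contains their convex hull $\Phi(x_1)\vee\dots\vee \Phi(x_n)$. Consequently any $y\in \Phi(C)$ lies in $\Phi(z)$ for some $z\in {\mathbb E}^{c}$. If $z\in C$ we are done; otherwise (\ref{Bedingung ii}) gives $y\in \Phi(z)\cap \Phi(C)=\Phi(\emptyset)$, again closing the argument.

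The identity (\ref{Phi(C)=union-without-empty}) then follows because $\Phi(\emptyset)\subseteq \Phi(x)$ for every $x\in {\mathbb E}^{c}$ was already established, so for $C\neq \emptyset$ the extra summand $\Phi(\emptyset)$ in (\ref{Phi(C)=union}) is absorbed by any $\Phi(x)$ with $x\in C$. For the lattice properties I would first note that (\ref{Phi(C)=union}) immediately yields monotonicity of $\Phi$. The inclusion $\Phi(A\wedge B)\subseteq \Phi(A)\wedge \Phi(B)$ is then monotonicity; for the reverse, let $y\in \Phi(A)\cap \Phi(B)$ and apply (\ref{Phi(C)=union}) to both factors. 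Either $y\in \Phi(\emptyset)\subseteq \Phi(A\wedge B)$, or $y\in \Phi(x)$ for some $x\in A$ and $y\in \Phi(x')$ for some $x'\in B$, in which case necessarily $x=x'$ (otherwise (\ref{Bedingung ii}) would force $y\in \Phi(x)\cap \Phi(x')=\Phi(\emptyset)$), giving $x\in A\cap B$ and $y\in \Phi(A\wedge B)$. For $\Phi(A\vee B)=\Phi(A)\vee \Phi(B)$ the inclusion $\supseteq$ is monotonicity together with convexity of $\Phi(A\vee B)$; for $\subseteq$, a point $y\in \Phi(A\vee B)$ lies by (\ref{Phi(C)=union}) either in $\Phi(\emptyset)\subseteq \Phi(A)\vee \Phi(B)$ or in $\Phi(x)$ for some $x\in A\vee B=\conv(A\cup B)$, and Carath\'eodory then yields points $p_1,\dots ,p_k\in A\cup B$ with $x\in p_1\vee \dots\vee p_k$, so (\ref{Bedingung iv}) gives $y\in \Phi(x)\subseteq \Phi(p_1)\vee \dots\vee \Phi(p_k)\subseteq \Phi(A)\vee \Phi(B)$.

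The main obstacle is the reverse inclusion in (\ref{Phi(C)=union}): it is the only step that genuinely combines all four hypotheses, using (\ref{Bedingung iv}) to trap $\Phi(C)$ inside a polytope generated by images of points, (\ref{Bedingung iii}) to ensure that every point of that polytope is the image of some $z\in {\mathbb E}^{c}$, and (\ref{Bedingung ii}) to show that points with $z\notin C$ can contribute only through $\Phi(\emptyset)$. Everything else is routine set-theoretic bookkeeping, with the only technicality being the empty-set cases, uniformly handled by the observation $\Phi(\emptyset)\subseteq \Phi(D)$ for every $D\in \mathscr{C}({\mathbb E}^{c})$.
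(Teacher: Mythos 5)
Your proposal is correct and follows essentially the same route as the paper: (i)+(ii) for the easy inclusion, compactness plus (iv) and (iii) to trap $\Phi(C)$ inside $\bigcup_{z\in\mathbb{E}^{c}}\Phi(z)$ and then (ii) to sort the contributing points, and the representation formula to verify both lattice operations. The only cosmetic differences are that you obtain $\Phi(\emptyset)\subseteq\Phi(x)$ via a second point $x'\neq x$ where the paper uses a point outside $C$, and you invoke Carath\'eodory for the join where the paper uses the two-point decomposition $x\in p_{x}\vee q_{x}$ with $p_{x}\in C$, $q_{x}\in D$.
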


\begin{remark}\label{Bedingung ii für punkte rem}
Note that hypothesis (\ref{Bedingung ii}) in Proposition \ref{Prop Bedingungen für Phi} implies especially for $x,y\in {\mathbb E}^{c}$ that $x \neq y \Rightarrow \Phi(x) \cap \Phi(y) = \Phi(\emptyset)$.
\end{remark}

\begin{proof}[Proof of Proposition \ref{Prop Bedingungen für Phi}]
Let $C\in \mathscr{C}(\mathbb{E}^{c})$ be arbitrary. From (\ref{Bedingung i}) and (\ref{Bedingung ii}) we obtain that  $\bigcup_{x \in C} \Phi(x)\cup \Phi(\emptyset) \subseteq \Phi(C)$. To prove (\ref{Phi(C)=union}) it thus remains to show that
\begin{equation}\label{Phi-C-Teilmenge-von}
\Phi(C) \subseteq \bigcup_{x \in C} \Phi(x)\cup \Phi(\emptyset).
\end{equation}
Since $C$ is compact we are able to choose points $x_{1},\dots , x_{n}\in {\mathbb E}^{c}$ such that $C \subseteq x_1 \vee  \dots \vee x_n$. By (\ref{Bedingung iii}) and (\ref{Bedingung iv}) we thus obtain that
\begin{equation} \label{Phi C subset Vereinigung Phi x}
\Phi(C) \stackrel{(\ref{Bedingung iv})}{\subseteq} \Phi(x_1) \vee \dots \vee \Phi(x_n) \stackrel{(\ref{Bedingung iii})}{\subseteq} \bigcup_{x \in \mathbb{E}^{c}} \Phi(x).
\end{equation}
Let $y \in \Phi(C) \setminus \Phi(\emptyset)$ be arbitrary. We obtain from (\ref{Phi C subset Vereinigung Phi x}) the existence of some $x \in \mathbb{E}^{c}$ such that $y \in \Phi(x) \setminus \Phi(\emptyset)$, thus by (\ref{Bedingung ii}) that $x \in C$ and thus further $y \in \bigcup_{x \in C} \Phi(x)$. Thus since $y \in \Phi(C) \setminus \Phi(\emptyset)$ was arbitrarily chosen we conclude (\ref{Phi-C-Teilmenge-von}) and thus (\ref{Phi(C)=union}) has been shown. Note that (\ref{Phi(C)=union-without-empty}) is an immediate consequence of (\ref{Phi(C)=union}) since (\ref{Phi(C)=union}) implies $\Phi(\emptyset)\subseteq \Phi(C)$ for any $C\in {\mathscr C}({\mathbb E}^c)$ and thus especially $\Phi(\emptyset)\subseteq \Phi(x)$ for any $x\in {\mathbb E}^{c}$.\\ \\
It remains to show that $\Phi$ is a lattice homomorphism. That $\Phi$ preserves the operation $\wedge$ follows since
\begin{equation*}
\begin{split}
\Phi(C) & \wedge \Phi(D) \stackrel{(\ref{Phi(C)=union})}{=}\left ( \bigcup_{x\in C} \Phi(x) \cup \Phi(\emptyset)\right ) \cap \left ( \bigcup_{y\in D} \Phi(y) \cup \Phi(\emptyset)\right )\\
& = \left ( \bigcup_{x\in C} \bigcup_{y\in D} \Phi(x)\cap \Phi(y) \right ) \cup \Phi(\emptyset) \stackrel{(*)}{=} \bigcup_{x\in C\cap D} \Phi(x) \cup \Phi(\emptyset) \stackrel{(\ref{Phi(C)=union})}{=} \Phi(C\wedge D)
\end{split}
\end{equation*}
with $(*)$ a consequence of Remark \ref{Bedingung ii für punkte rem}. Further we obtain from \ref{Phi(C)=union} that $\Phi$ preserves inclusions and thus we obtain by the convexity of $\Phi(C\vee D)$ that 
\begin{equation*}
\Phi(C)\vee \Phi(D) \subseteq \Phi(C\vee D).
\end{equation*}
It thus remains to show that 
\begin{equation}\label{phi-vee-phi-subset}
\Phi(C\vee D) \subseteq \Phi(C)\vee \Phi(D).
\end{equation}
Let $x\in C\vee D$ be arbitrary. Then there exist $p_{x}\in C$ and $q_{x}\in D$ such that $\{ x \}\subseteq \{ p_{x} \} \vee \{ q_{x} \}$, thus by (\ref{Bedingung iv}) $\Phi(x)\subseteq \Phi(p_{x})\vee \Phi(q_{x})$ and thus further  
\begin{equation*}
\begin{split}
\Phi(C\vee D) & \stackrel{(\ref{Phi(C)=union})}{=}\bigcup_{x\in C\vee D}\Phi(x) \cup \Phi(\emptyset) \subseteq \bigcup_{x\in C\vee D} (\Phi(p_{x})\vee \Phi(q_{x})) \cup \Phi(\emptyset)\\
& \subseteq \bigcup_{p\in C} \bigcup_{q\in D} (\Phi(p)\vee \Phi(q)) \cup \Phi(\emptyset) \stackrel{(\ref{Bedingung i})+(\ref{Phi(C)=union})}{\subseteq} \Phi(C)\vee \Phi(D)
\end{split}
\end{equation*}
i.e. (\ref{phi-vee-phi-subset}) has been shown and the proof is complete.
\end{proof}

\begin{theorem}\label{Theorem Homomorphisms}
Let $c$ be a natural number $\geq 3$ and let $\Phi: \mathscr{C}(\mathbb{E}^{c}) \longrightarrow \mathscr{C}(\mathbb{E}^{c+1})$. Then equivalent are:
\begin{enumerate}[(A)]
\item $\Phi$ is a non-trivial lattice homomorphism.\label{Phi-is-homo} 
\item \label{disjunction-into-4-cases} There exist a hyperplane $H \subset \mathbb{E}^{c+1}$ and an affine bijection $\phi : \mathbb{E}^{c} \to H$ such that precisely one of the following cases holds:
\end{enumerate}
\begin{enumerate} [\ \ \ \ (i)\ ]
\item For any $x\in {\mathbb E}^{c}$ and any $C\in {\cal C}({\mathbb E}^{c})$
\begin{equation*}
\Phi(x)=\phi(x)\ \hbox{ and }\ \Phi(C)=\bigcup_{x\in C}\Phi(x)
\end{equation*}
and thus $\Phi(C)=\phi(C)$ for $\phi(C):=\bigcup_{x\in C} \phi(x)$.\label{case i}
\item $\Phi(\emptyset)=\lbrace o \rbrace$ for some $o \in \mathbb{E}^{c+1}\setminus H$,\label{case ii} and for any $x\in {\mathbb E}^{c}$ and any $C \in \mathscr{C}(\mathbb{E}^{c})$
\begin{equation*}
\Phi(x) = [\phi(x), o] \hbox{ and }\ \Phi(C)=\bigcup_{x\in C} \Phi(x)
\end{equation*}
\item There exists some vector $v \nparallel H$ such that $\forall x\in {\mathbb E}^{c}$ and $\forall C \in \mathscr{C}(\mathbb{E}^{c})$
\begin{equation*}
\Phi(x) = [\phi(x),\phi(x)+v]\ \hbox{ and }\ \Phi(C) = \bigcup_{x\in C} \Phi(x)
\end{equation*}\label{case iii}
\item 
There exist $o \in \mathbb{E}^{c+1}\setminus H$ and $\gamma \in (0,1)$ such that $\forall x\in {\mathbb E}^{c}$ and $\forall C \in \mathscr{C}(\mathbb{E}^{c})$
\begin{equation*}
\Phi(x) = [\phi(x),\gamma \phi(x) + (1-\gamma) o] \ \hbox{ and }\ \Phi(C) = \bigcup_{x\in C} \Phi(x).
\end{equation*}\label{case iv}
\end{enumerate}
\end{theorem}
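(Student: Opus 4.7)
The plan is to establish the equivalence by treating the two implications separately. The direction (B)$\Rightarrow$(A) should be a direct verification from the explicit formulas, while the hard direction (A)$\Rightarrow$(B) will be carried out by a three-way case analysis that exactly mirrors the subdivision into cases (\ref{case i})--(\ref{case iv}) announced in the statement.

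For (B)$\Rightarrow$(A), I would in each of the four cases read off the hypotheses (\ref{Bedingung i})--(\ref{Bedingung iv}) of Proposition \ref{Prop Bedingungen für Phi} and invoke that proposition to conclude $\Phi$ is a lattice homomorphism; non-triviality is immediate from the fact that $\phi:\mathbb{E}^{c}\to H$ is an affine bijection, so $\Phi$ is already non-constant on singletons. Conditions (\ref{Bedingung i}) and (\ref{Bedingung ii}) are geometrically transparent, since two distinct segments of the forms described meet exactly in $\Phi(\emptyset)$ (either $\emptyset$, the common apex $o$, or no intersection at all). Condition (\ref{Bedingung iii}) says $\bigcup_{x}\Phi(x)$ is convex; this is concrete in each case (a hyperplane in (\ref{case i}), a half-cone with apex $o$ in (\ref{case ii}), a slab parallel to $v$ in (\ref{case iii}), a truncated cone in (\ref{case iv})). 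Condition (\ref{Bedingung iv}) reduces to the observation that if $C\subseteq x_1\vee\cdots\vee x_n$ then each $\phi(x)$ is a convex combination of $\phi(x_1),\dots,\phi(x_n)$, and the same convex combination then delivers $\Phi(x)\subseteq\Phi(x_1)\vee\cdots\vee\Phi(x_n)$.

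For (A)$\Rightarrow$(B), I would first invoke Proposition \ref{dimprop} and the Dimension Lemma \ref{Dimlemma}: since $c\geq 3$, one obtains $\dim\Phi(x)\leq 1$ for every $x\in\mathbb{E}^{c}$ and $\Phi(\emptyset)$ is either empty or a singleton. I then branch into three disjoint cases. First, if $\Phi$ preserves dimension, every $\Phi(x)$ is a singleton and $\Phi(\emptyset)=\emptyset$, and Lemma \ref{Gruberfalllemma} supplies the affine bijection $\phi:\mathbb{E}^{c}\to H$ producing case (\ref{case i}). Second, if $\Phi(\emptyset)=\{o\}$ then every $\Phi(x)$ is a segment with endpoint $o$; Lemma \ref{E-empty-to-point-lemma} handles this case and yields the decomposition $\Phi(x)=[\phi(x),o]$ of case (\ref{case ii}). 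Third, if $\Phi$ is dimension-raising with $\Phi(\emptyset)=\emptyset$, then Lemma \ref{empty-to-empty-lemma}---using Lemma \ref{radon-affine-dependence-lemma} and Lemma \ref{radon-affine-dependence-lemma-parallel} to distinguish the parallel from the concurrent subcase---provides cases (\ref{case iii}) and (\ref{case iv}) respectively. In each case the identity $\Phi(C)=\bigcup_{x\in C}\Phi(x)$ (together with $\Phi(\emptyset)$ where appropriate) follows from Proposition \ref{Prop Bedingungen für Phi} once the hypotheses have been verified.

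The main obstacle is clearly buried inside Lemma \ref{empty-to-empty-lemma}, which orchestrates the deepest machinery of the paper. There one must: first use Radon's theorem through Lemma \ref{radon-affine-dependence-lemma} and its parallel analogue to show that the carrying lines of the $\Phi(x)$ either emanate from a common apex $o\in\mathbb{E}^{c+1}$ or are mutually parallel; secondly apply the Helly-based transversality theorems of Appendix \ref{transversality-theorems} to produce a hyperplane $H$ transverse to the entire family, making $\phi(x):=\Phi(x)\cap H$ a well-defined map into $H$; thirdly use Proposition \ref{preservation-of-point-order-prop} or Proposition \ref{preservation-of-point-order-prop-prallel} to show that $\phi$ preserves order of collinear points and then Corollary \ref{corollary-aff-geometry} to upgrade $\phi$ to an affine bijection; and finally in the concurrent subcase extract the constant ratio $\gamma$ via Proposition \ref{ray hyperlane intersection existance gamma} to obtain the second endpoint $\gamma\phi(x)+(1-\gamma)o$ of (\ref{case iv}). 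By contrast, the coordination of these three lemmas into a proof of the theorem itself is essentially bookkeeping; the real work lies in stitching Radon-type combinatorics, Helly-type transversality, and the main theorem of affine geometry into the single coherent argument needed for Lemma \ref{empty-to-empty-lemma}.
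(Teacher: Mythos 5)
Your proposal is correct and follows essentially the same route as the paper: the same trichotomy (dimension-preserving, $\Phi(\emptyset)\neq\emptyset$, dimension-raising with $\Phi(\emptyset)=\emptyset$) derived from Proposition \ref{injectivity-on-points-prop} and the Dimension Lemma \ref{Dimlemma}, delegated to Lemmas \ref{Gruberfalllemma}, \ref{E-empty-to-point-lemma} and \ref{empty-to-empty-lemma} respectively, with (B)$\Rightarrow$(A) handled as a direct verification (via Proposition \ref{Prop Bedingungen für Phi}, as in Corollary \ref{char-of-homo-corollary}). Your assessment of where the real difficulty lies is also accurate.
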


\begin{proof}
The implication $(\ref{disjunction-into-4-cases})\Rightarrow (\ref{Phi-is-homo})$ in Theorem \ref{Theorem Homomorphisms} is trivial. Thus we prove $(\ref{Phi-is-homo}) \Rightarrow (\ref{disjunction-into-4-cases})$. The proof is based on the Lemmas \ref{Gruberfalllemma}, \ref{E-empty-to-point-lemma} and \ref{empty-to-empty-lemma} that are proved---using the results of Section \ref{preliminaries}, Section \ref{dimension-arguments} and Proposition \ref{Prop Bedingungen für Phi}---in the following two sections.\\ \\
Let $\Phi: \mathscr{C}(\mathbb{E}^{c}) \longrightarrow \mathscr{C}(\mathbb{E}^{c+1})$ be a non-trivial lattice homomorphism. We first show that precisely one of the following cases must hold:
\begin{enumerate} [(I)]
\item \label{one-point-to-one-point-item} $\Phi$ maps any one-point set to a one-point set.
\item \label{empty-to-one-point-item} $\Phi$ maps the empty set to some non-empty set. 
\item \label{dimension-gap-item} $\Phi$ maps the empty set to the empty set and some one-point set to some proper line-segment.
\end{enumerate}
The non-triviality of $\Phi$ implies by Proposition \ref{injectivity-on-points-prop} that $(\forall x\in {\mathbb E}^{c})\ \Phi(\emptyset)\subsetneq \Phi(x)$, consequently by injectivity of $x \mapsto \Phi (x)$ we obtain in case (\ref{one-point-to-one-point-item}) that $\Phi(\emptyset)=\emptyset$ and thus the cases (\ref{one-point-to-one-point-item}), (\ref{empty-to-one-point-item}) and (\ref{dimension-gap-item}) are pairwise disjoint i.e. a homomorphism can not fulfil more than one of the cases. By Lemma \ref{Dimlemma} we know that for any non-trivial homomorphism $\Phi: \mathscr{C}(\mathbb{E}^{c}) \to \mathscr{C}(\mathbb{E}^{c+1})$ it holds that 
\begin{equation}\label{0-leq-dim-diff-leq-1}
(\forall C\in \mathscr{C}(\mathbb{E}^{c}))\ \ 
\dim(C) \leq c-2 \implies  \dim(\Phi(C))-\dim(C)\leq 1.
\end{equation}
A convex set $C$ is $(-1)$-dimensional iff $C=\emptyset$, it is $0$-dimensional iff it is a one-point set and it is $1$-dimensional iff it is a proper line-segment. Thus we obtain from (\ref{0-leq-dim-diff-leq-1}) that the empty set has to be mapped to the empty set or some one-point set. Further by Proposition \ref{injectivity-on-points-prop} one point sets can not be mapped to the empty set and thus by (\ref{0-leq-dim-diff-leq-1}) either any one-point set is mapped to a one-point set or there exists a one-point set that is mapped to a proper line-segment. The case that the empty set is mapped to some one-point set is entirely covered by case (\ref{empty-to-one-point-item}) and in case that the empty set is mapped to the empty set the cases (\ref{one-point-to-one-point-item}) and (\ref{dimension-gap-item}) cover all possibilities for $\Phi$ to deal with one point sets. Thus for any homomorphism one of the cases (\ref{one-point-to-one-point-item}), (\ref{empty-to-one-point-item}) or (\ref{dimension-gap-item}) has to hold. Altogether we have shown that for $\Phi$ precisely one of the cases (\ref{one-point-to-one-point-item}) to (\ref{dimension-gap-item}) holds.\\ \\
Each of the cases (\ref{one-point-to-one-point-item}) to (\ref{dimension-gap-item}) is covered by precisely one of the Lemmas \ref{Gruberfalllemma}, \ref{E-empty-to-point-lemma} and \ref{empty-to-empty-lemma}. Lemma \ref{Gruberfalllemma} shows that (\ref{one-point-to-one-point-item}) implies (\ref{disjunction-into-4-cases} \ref{case i}), while Lemma \ref{E-empty-to-point-lemma} proves that (\ref{empty-to-one-point-item}) implies (\ref{disjunction-into-4-cases} \ref{case ii}). Finally we obtain from Lemma \ref{empty-to-empty-lemma} that (\ref{dimension-gap-item}) implies that either (\ref{disjunction-into-4-cases} \ref{case iii}) or (\ref{disjunction-into-4-cases} \ref{case iv}) holds, which completes the proof of the theorem.
\end{proof}

\begin{corollary}\label{char-of-homo-corollary}
Let $c\geq 3$. Then $\Phi: \mathscr{C}(\mathbb{E}^{c}) \to \mathscr{C}(\mathbb{E}^{c+1})$ is a non-trivial lattice homomorphism iff $\Phi$ fulfils the hypotheses (\ref{Bedingung i})-(\ref{Bedingung iv}) of Proposition \ref{Prop Bedingungen für Phi} and is in addition non-trivial i.e. there exist $C,D\in \mathscr{C}(\mathbb{E}^{c})$ such that $\Phi(C)\neq \Phi(D)$.
\end{corollary}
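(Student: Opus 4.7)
The plan is to split the equivalence into its two directions and concentrate effort on the forward one, since the backward direction is essentially already packaged for us. Specifically, the implication ``$\Phi$ satisfies (\ref{Bedingung i})--(\ref{Bedingung iv}) and is non-trivial $\Rightarrow$ $\Phi$ is a non-trivial lattice homomorphism'' is immediate from Proposition \ref{Prop Bedingungen für Phi}: the four conditions force $\Phi$ to be a lattice homomorphism, and the extra non-triviality hypothesis transfers verbatim. All the real work therefore lies in showing the forward direction, namely that every non-trivial lattice homomorphism $\Phi:\mathscr{C}(\mathbb{E}^c)\to \mathscr{C}(\mathbb{E}^{c+1})$ satisfies (\ref{Bedingung i})--(\ref{Bedingung iv}).

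Three of the four conditions are one-line consequences of $\Phi$ preserving the lattice operations, so I would dispose of them first. For (\ref{Bedingung i}), $x\in C$ gives $\{x\}\wedge C=\{x\}$, whence $\Phi(x)=\Phi(x)\wedge \Phi(C)\subseteq \Phi(C)$. For (\ref{Bedingung ii}), $x\notin C$ gives $\{x\}\wedge C=\emptyset$, so $\Phi(x)\cap \Phi(C)=\Phi(\{x\}\wedge C)=\Phi(\emptyset)$. For (\ref{Bedingung iv}), order-preservation (automatic for any lattice homomorphism, cf.\ the remark in the introduction) combined with $C\subseteq x_1\vee\dots\vee x_n$ yields $\Phi(C)\subseteq \Phi(x_1\vee\dots\vee x_n)=\Phi(x_1)\vee\dots\vee \Phi(x_n)$.

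The substantive step---and the one I expect to be the main obstacle---is condition (\ref{Bedingung iii}), i.e.\ showing that $U:=\bigcup_{x\in \mathbb{E}^c}\Phi(x)$ is convex. This property cannot be read off from the homomorphism property alone; it seems genuinely to require knowing what $\Phi$ looks like, and we must therefore invoke the full classification Theorem \ref{Theorem Homomorphisms}. The plan is to appeal to that theorem, which forces $\Phi$ into exactly one of the four types (\ref{case i})--(\ref{case iv}), and to verify convexity of $U$ in each case separately. In case (\ref{case i}) one has $U=\phi(\mathbb{E}^c)=H$, a hyperplane. In case (\ref{case ii}), $U=\bigcup_{x}[\phi(x),o]=\conv(H\cup\{o\})$, the cone with apex $o$ over $H$. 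In case (\ref{case iii}), $U=\{h+tv : h\in H,\,t\in[0,1]\}$ is the closed slab between the parallel hyperplanes $H$ and $H+v$. In case (\ref{case iv}), a short reparametrisation writes $U=\bigcup_{s\in[\gamma,1]}\bigl(sH+(1-s)o\bigr)$, a frustum between the parallel hyperplanes $\gamma H+(1-\gamma)o$ and $H$. In each instance convexity is immediate---in the last two cases by a one-line check on affine combinations---and this completes the verification of (\ref{Bedingung iii}) and hence of the corollary.
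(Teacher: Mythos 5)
Your proposal is correct and follows essentially the same route as the paper: the backward direction is read off from Proposition \ref{Prop Bedingungen für Phi}, and the forward direction is derived from the classification in Theorem \ref{Theorem Homomorphisms} (the paper compresses this to ``one easily derives,'' while you fill in the details, correctly isolating convexity of $\bigcup_{x}\Phi(x)$ as the only condition that genuinely needs the case analysis). Your case-by-case convexity checks, including the frustum computation in case (\ref{case iv}), are all valid.
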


\begin{proof}
From Theorem \ref{Theorem Homomorphisms} one easily derives that any non-trivial lattice homomorphism $\Phi: \mathscr{C}(\mathbb{E}^{c}) \to \mathscr{C}(\mathbb{E}^{c+1})$ fulfils the hypotheses of Proposition \ref{Prop Bedingungen für Phi}. Conversely we obtain from Proposition \ref{Prop Bedingungen für Phi} that the mapping $\Phi$ is a lattice homomorphism and thus since $\Phi$ is non-trivial a non-trivial lattice homomorphism.
\end{proof}

\begin{remark}
From the proof and statement of Theorem \ref{Theorem Homomorphisms} it becomes clear that:\\ \\
Case (\ref{disjunction-into-4-cases} \ref{case i}) in Theorem \ref{Theorem Homomorphisms} applies iff $\Phi$ maps one-point sets to one-point sets which is the case iff $\Phi$ keeps the dimension of any convex body constant i.e. 
\begin{equation*}
(\forall C\in\mathscr{C}(\mathbb{E}^{c}))\ \ \dim \Phi(C)=\dim C.
\end{equation*}
Case (\ref{disjunction-into-4-cases} \ref{case ii}) in Theorem \ref{Theorem Homomorphisms} applies iff $\Phi$ maps the empty set to some non-empty set iff $\Phi$ maps the empty set to some one-point set iff $\Phi$ rises the dimension of any convex body by $1$ i.e. 
\begin{equation*}
(\forall C\in\mathscr{C}(\mathbb{E}^{c}))\ \ \dim \Phi(C)=\dim C + 1.
\end{equation*}
Case (\ref{disjunction-into-4-cases} \ref{case iii}) and (\ref{case iv}) in Theorem \ref{Theorem Homomorphisms} apply iff $\Phi(\emptyset)=\emptyset$ and $\Phi$ maps some point to some convex body of dimension $\geq 1$ iff $\Phi(\emptyset)=\emptyset$ and $\Phi$ maps some point to a proper line-segment iff $\Phi$ raises the dimension of any non-empty convex body by $1$ and keeps the dimension of the empty set constant i.e. 
\begin{equation*}
(\forall C\in\mathscr{C}(\mathbb{E}^{c})\setminus \{ \emptyset \})\ \ \dim (\Phi(C))=\dim (C) + 1\ \hbox{ and }\ \dim \Phi(\emptyset) = \dim \emptyset = (-1).
\end{equation*}
\end{remark}

\begin{remark}
The image of $\mathscr{C}(\mathbb{E}^{c})$ under the mapping $[\dim\circ \Phi]:{\mathscr C}({\mathbb E}^{c})\to {\mathbb Z}$ is\\ \\
$\{ -1, 0, 1,\dots c\}$ in case (\ref{disjunction-into-4-cases} \ref{case i}),\\ \\
$\{ 0, 1,\dots c+1\}$ in case (\ref{disjunction-into-4-cases} \ref{case ii}) and\\ \\
$\{ -1, 1,\dots c+1\}$ in the cases (\ref{disjunction-into-4-cases} \ref{case iii}) and (\ref{case iv}) of Theorem \ref{Theorem Homomorphisms}.\\ \\
Thus in the cases (\ref{disjunction-into-4-cases} \ref{case i}) and (\ref{case ii}) the image $[\dim\circ \Phi](\mathscr{C}(\mathbb{E}^{c}))$ is an order interval in ${\mathbb Z}$ containing $0$, while in the cases (\ref{disjunction-into-4-cases} \ref{case iii}) and (\ref{case iv}) the image possesses a gap at $0$. We call this gap the dimension-gap. The proofs of Lemma \ref{Gruberfalllemma} and Lemma \ref{E-empty-to-point-lemma}---covering the cases without dimension-gap---are considerably simpler than the proof of Lemma \ref{empty-to-empty-lemma} that covers the situations in which a dimension gap occurs.   
\end{remark}

\section{Cases without dimension-gap}\label{section-no-dim-gape}

\begin{lemma} \label{Gruberfalllemma}
Let $\Phi: \mathscr{C}(\mathbb{E}^{c}) \to \mathscr{C}(\mathbb{E}^{c+1})$ be a non-trivial lattice homomorphism, let $c \geq 2$ and suppose that $\Phi$ maps one-point sets to one-point sets. Then $\Phi(\emptyset)=\emptyset$ and there exists a hyperplane $H\subset \mathbb{E}^{c+1}$ and an affine bijection $\phi:{\mathbb E}^{c}\to H$ such that $\Phi(x) = \phi(x)$ for $x \in \mathbb{E}^{c}$. Further $\Phi(C)=\bigcup_{x\in C} \Phi(x)$ for any $C\in \mathscr{C}({\mathbb E}^{c})$.
\end{lemma}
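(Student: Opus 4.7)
The plan is to extract a point-map $\phi$ from $\Phi$ and show it satisfies the hypotheses of the main theorem of affine geometry. First, $\Phi(\emptyset)=\emptyset$ follows at once: by Proposition \ref{injectivity-on-points-prop} one has $\Phi(\emptyset)\subsetneq\Phi(x)$ for every $x\in\mathbb{E}^{c}$, and since each $\Phi(x)$ is a one-point set the only strictly smaller set is the empty set. I then define $\phi:\mathbb{E}^{c}\to\mathbb{E}^{c+1}$ by letting $\phi(x)$ be the unique element of $\Phi(x)$; the same Proposition \ref{injectivity-on-points-prop} shows that $\phi$ is injective.

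The key step is to prove that $\phi$ preserves the order of points. If $y\in(x,z)$, then $\{y\}\subseteq[x,z]=\{x\}\vee\{z\}$, so monotonicity of $\Phi$ together with the homomorphism property yield
\begin{equation*}
\{\phi(y)\}=\Phi(y)\subseteq\Phi(x)\vee\Phi(z)=[\phi(x),\phi(z)].
\end{equation*}
Injectivity of $\phi$ excludes $\phi(y)=\phi(x)$ and $\phi(y)=\phi(z)$, so $\phi(y)\in(\phi(x),\phi(z))$. Since $c\geq 2$, Corollary \ref{corollary-aff-geometry} applies, so $\phi$ is an affine bijection onto its image $H:=\phi(\mathbb{E}^{c})$. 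Because affine bijections preserve dimension, $H$ is a $c$-dimensional affine subspace of $\mathbb{E}^{c+1}$, i.e.\ a hyperplane.

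To finish, I show $\Phi(C)=\bigcup_{x\in C}\Phi(x)=\phi(C)$ for every $C\in\mathscr{C}(\mathbb{E}^{c})$. The inclusion $\phi(C)\subseteq\Phi(C)$ is simply order-preservation applied pointwise, so only the reverse inclusion needs argument. Let $y\in\Phi(C)$. Proposition \ref{range-of-Phi-determined-by-image-of-one-point-sets} gives $\Phi(C)\subseteq\conv\bigl(\bigcup_{x\in\mathbb{E}^{c}}\Phi(x)\bigr)=\conv(H)=H$, so $y=\phi(x)$ for a unique $x\in\mathbb{E}^{c}$. If $x\notin C$, then Proposition \ref{x-notin-C-prop} would give $\Phi(x)\cap\Phi(C)=\Phi(\emptyset)=\emptyset$, contradicting $\phi(x)=y\in\Phi(C)$; hence $x\in C$ and $y\in\phi(C)$.

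No real obstacle stands in the way of this argument: everything reduces to verifying the hypotheses of Corollary \ref{corollary-aff-geometry} and then reading off the conclusion from Propositions \ref{range-of-Phi-determined-by-image-of-one-point-sets} and \ref{x-notin-C-prop}. The only mild subtlety is the notational bookkeeping between the set-valued map $\Phi$ and the point-valued map $\phi$, and observing that the hypothesis $c\geq 2$ of the corollary coincides with the standing hypothesis of the lemma, so the main theorem of affine geometry is available without any gymnastics.
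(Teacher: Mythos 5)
Your proof is correct and follows the paper's own argument almost verbatim: extract the point map $\phi$ from $\Phi$, get $\Phi(\emptyset)=\emptyset$ and injectivity from Proposition \ref{injectivity-on-points-prop}, show $\phi$ preserves the order of points via $\Phi(y)\subseteq\Phi(x)\vee\Phi(z)$, and invoke Corollary \ref{corollary-aff-geometry}. The only (minor) divergence is the final step, where the paper verifies the hypotheses of Proposition \ref{Prop Bedingungen für Phi} to get $\Phi(C)=\bigcup_{x\in C}\Phi(x)$, while you argue directly from Propositions \ref{range-of-Phi-determined-by-image-of-one-point-sets} and \ref{x-notin-C-prop}; both routes are valid and of comparable length.
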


\begin{proof}
That $\Phi(\emptyset)=\emptyset$ is a consequence of the hypotheses and Proposition \ref{injectivity-on-points-prop}. Define the mapping $\phi$ by $\{ \phi(x) \} = \Phi(x)$. Then by Proposition \ref{injectivity-on-points-prop} $\phi(x)$ is injective. Further $z\in (x,y)$ implies that 
\begin{equation*}
\phi(z)\in \phi(x)\vee \phi(y)\setminus \{\phi(x),\phi(y) \}=(\phi(x),\phi(y))
\end{equation*}
i.e. $\phi$ preserves the order of points and thus is by Corollary \ref{corollary-aff-geometry} an affine bijection onto its image the hyperplane $H:=\{ \phi(x) \mid x\in {\mathbb E}^{c} \}$. Thus
\begin{equation*}
\bigcup_{x \in \mathbb{E}^{c}} \Phi(x) = \bigcup_{x \in \mathbb{E}^{c}} \phi(x) = \phi(\mathbb{E}^{c}) = H\ \hbox{ is convex}
\end{equation*}
i.e. hypothesis (\ref{Bedingung iii}) of Proposition \ref{Prop Bedingungen für Phi} is fulfilled. Since $\Phi$ is a lattice homomorphism one easily shows that the hypotheses (i), (ii) and (iv) of Proposition \ref{Prop Bedingungen für Phi} are equally fulfilled. By application of Proposition \ref{Prop Bedingungen für Phi} and the fact that $\Phi(\emptyset)=\emptyset$ we obtain that $\Phi(C)=\bigcup_{x\in C} \Phi(x)= \bigcup_{x\in C}\phi(x) =: \phi(C)$ for any $C\in \mathscr{C}({\mathbb E}^{c})$.
\end{proof}

\begin{lemma}\label{E-empty-to-point-lemma}
Let $\Phi: \mathscr{C}(\mathbb{E}^{c}) \to \mathscr{C}(\mathbb{E}^{c+1})$ be a non-trivial lattice homomorphism let $c \geq 3$ and suppose that $\Phi$ maps the empty set to some non-empty set. Then $\Phi(\emptyset)=\{ o \}$ for some $o\in {\mathbb E}^{c+1}$ and $\Phi$ maps any one-point set to a proper line-segment containing $o$ as one of its endpoints. Further there exists a hyperplane $H\subset \mathbb{E}^{c+1}$ with $o\notin H$ and an affine bijection $\phi:{\mathbb E}^{c}\to H$ such that for any $x\in {\mathbb E}^{c}$ and any $C \in \mathscr{C}(\mathbb{E}^{c})$
\begin{equation}\label{case-empty-to-point-result}
\Phi(x) = [\phi(x), o] \hbox{ and }\ \Phi(C)=\bigcup_{x\in C} \Phi(x).
\end{equation}
\end{lemma}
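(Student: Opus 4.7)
Applying Lemma \ref{Dimlemma} with $C=\emptyset$ (of dimension $-1\le c-2$) yields $\dim\Phi(\emptyset)\le 0$; combined with $\Phi(\emptyset)\ne\emptyset$, this forces $\Phi(\emptyset)=\{o\}$ for some $o\in\mathbb{E}^{c+1}$. Applying Lemma \ref{Dimlemma} to singletons $C=\{x\}$ gives $\dim\Phi(x)\le 1$, while Proposition \ref{injectivity-on-points-prop} gives $\Phi(\emptyset)\subsetneq\Phi(x)$; hence every $\Phi(x)$ is a proper line-segment containing $o$.

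\textbf{$o$ as endpoint.} The key geometric claim is that $o$ is an endpoint of $\Phi(x)$ for every $x$. Suppose otherwise, so $o$ lies in the relative interior of some $\Phi(x_0)=[a,b]$ with $a\ne o\ne b$. From $\Phi(x_0)\cap\Phi(y)=\Phi(\emptyset)=\{o\}$ together with $o$ being interior to $\Phi(x_0)$, the supporting line $g_y$ of $\Phi(y)$ necessarily differs from the line $g_{x_0}$ through $a,o,b$ for every $y\ne x_0$. For any $y_1,y_2$ with $x_0\in(y_1,y_2)$, the inclusion $\Phi(x_0)\subseteq\Phi(y_1)\vee\Phi(y_2)$ plus a short dimension count forces $g_{x_0},g_{y_1},g_{y_2}$ into a common $2$-dimensional plane $P$. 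A case analysis on the endpoint/interior status of $o$ in $\Phi(y_1),\Phi(y_2)$ shows that in every case other than ``$o$ interior to both'', the convex hull $\Phi(y_1)\vee\Phi(y_2)$ is a (possibly degenerate) triangle in $P$ into which $g_{x_0}$ from $o$ can enter in only one direction; this contradicts the fact that $\Phi(x_0)$ extends on both sides of $o$ along $g_{x_0}$. Consequently every bracketing pair $(y_1,y_2)$ of $x_0$ consists of interior-type $y$'s, which, varying $y_1$ over $\mathbb{E}^c\setminus\{x_0\}$, globalizes to the statement that $o$ lies interior to $\Phi(y)$ for \emph{every} $y\in\mathbb{E}^c$. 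This residual scenario is excluded by fixing a line $\ell\subset\mathbb{E}^c$; by Lemma \ref{Dimlemma} all $\Phi(y)$ with $y\in\ell$ lie in a common $2$-dimensional plane $P_\ell\ni o$, the assignment $y\mapsto g_y$ is injective into the $1$-parameter family of lines through $o$ in $P_\ell$, and the lattice condition $y_2\in(y_1,y_3)\Rightarrow\Phi(y_2)\subseteq\Phi(y_1)\vee\Phi(y_3)$ together with a Radon-type configuration of four collinear points on $\ell$ yields an angular-ordering obstruction incompatible with pairwise-disjoint-off-$o$.

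\textbf{Affine bijection and formula for $\Phi(C)$.} Writing $\Phi(x)=[o,\phi(x)]$, the map $\phi$ is injective by Proposition \ref{injectivity-on-points-prop}, and the rays $r_x$ from $o$ through $\phi(x)$ are pairwise distinct. Lemma \ref{radon-affine-dependence-lemma} therefore applies; its case (i) would force each $\Phi(x)$ to be a one-point set, contradicting the first step, so case (ii) holds: for every $(c+2)$-point set $S\subset\mathbb{E}^c$ there is a hyperplane $F_S\not\ni o$ meeting every $\Phi(x)$, $x\in S$. Fixing an affinely independent $(c+1)$-tuple $\{x_1,\dots,x_{c+1}\}$ and applying case (ii) to the $(c+2)$-sets $\{x_1,\dots,x_{c+1},y\}$ for varying $y$, one verifies (using the lattice rigidity on the $c$-simplex spanned by the $x_i$) that the resulting hyperplanes all coincide with $H:=\aff(\phi(x_1),\dots,\phi(x_{c+1}))$; hence $o\notin H$ and $\phi(\mathbb{E}^c)\subseteq H$. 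Proposition \ref{preservation-of-point-order-prop}, applied with $F=H$ and the rays $r_x$, then gives that $\phi$ preserves the order of points, and Corollary \ref{corollary-aff-geometry} upgrades $\phi$ to an affine bijection onto $H$. Finally, to obtain $\Phi(C)=\bigcup_{x\in C}\Phi(x)$, verify the hypotheses of Proposition \ref{Prop Bedingungen für Phi}: (\ref{Bedingung i}), (\ref{Bedingung ii}), (\ref{Bedingung iv}) follow directly from the homomorphism property together with $\Phi(x)=[o,\phi(x)]$, while (\ref{Bedingung iii}) holds because $\bigcup_{x\in\mathbb{E}^c}[o,\phi(x)]$ is the cone over $H$ with apex $o$, which is convex since $\phi$ is an affine bijection onto $H$. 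Proposition \ref{Prop Bedingungen für Phi} then delivers $\Phi(C)=\bigcup_{x\in C}\Phi(x)\cup\{o\}=\bigcup_{x\in C}\Phi(x)$ for $C\ne\emptyset$.

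\textbf{Main obstacle.} I expect the principal difficulty to be the ``$o$ as endpoint'' step, specifically the exclusion of the globally-interior configuration: the mixed-case elimination via the cone-at-vertex argument is relatively short, but ruling out the all-interior scenario requires the more delicate angular-ordering analysis in the planes $P_\ell$. The subsequent extraction of the common hyperplane $H$ from case (ii) of Lemma \ref{radon-affine-dependence-lemma} is then a secondary technical point, reducing via lattice rigidity on a $(c+1)$-simplex to the fact that the local hyperplanes $F_S$ must stabilize to a single $H$.
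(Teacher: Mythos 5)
Your opening step (the dimension controls giving $\Phi(\emptyset)=\{o\}$ and each $\Phi(x)$ a proper segment through $o$) matches the paper. After that there are two genuine gaps.

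First, the ``$o$ as endpoint'' step. Your mixed-case elimination is sound in outline, but the residual all-interior scenario --- the case you yourself flag as the main obstacle --- is not actually proved: ``an angular-ordering obstruction incompatible with pairwise-disjoint-off-$o$'' is a gesture, not an argument, and it is not said what contradiction the four collinear points are supposed to yield. The paper avoids the case split entirely: assuming $o\in\relint(\Phi(x))$ for a single $x$, it takes collinear points $y\in(x,z)$, notes that $\Phi(x),\Phi(y),\Phi(z)$ lie in a common plane by Lemma \ref{Dimlemma}, and applies Proposition \ref{Dreiecksprop} to produce a point of $\Phi(z)\cap\Phi(x\vee y)$ different from $o$, contradicting $\Phi(z\wedge(x\vee y))=\Phi(\emptyset)=\{o\}$. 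This one-shot contradiction covers all configurations, including the all-interior one.

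Second, the passage to the affine bijection does not go through as written. Lemma \ref{radon-affine-dependence-lemma} and Proposition \ref{preservation-of-point-order-prop} both hypothesize $\Phi(x)\subset r_x\setminus\{o\}$, i.e.\ $o\notin\Phi(x)$; here $\Phi(x)=[o,\phi(x)]$ contains $o$, so neither applies, and the proof of Lemma \ref{radon-affine-dependence-lemma} genuinely uses $o\notin\Phi(x)$ (to conclude that a hyperplane through $o$ meeting $\Phi(x)$ must contain all of $\Phi(x)$). Moreover, even granting a transversal hyperplane $F_S\not\ni o$ for each $(c+2)$-set, it would meet $[o,\phi(x)]$ in some point of the segment, not necessarily the far endpoint, so $\phi(\mathbb{E}^{c})\subseteq H$ would not follow; and the claim that the various $F_S$ coincide ``by lattice rigidity'' is unsubstantiated. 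The paper takes a different and self-contained route here: it proves directly that $\phi$ preserves the order of points by an explicit planar argument (Propositions \ref{existence-of-second-line}, \ref{relint-two-triangles-intersection-prop} and \ref{quadrangle-prop}), after which Corollary \ref{corollary-aff-geometry} makes $\phi$ an affine bijection onto its image, which is then automatically a hyperplane $H$ with $o\notin H$. Your final appeal to Proposition \ref{Prop Bedingungen für Phi} for $\Phi(C)=\bigcup_{x\in C}\Phi(x)$ is correct and coincides with the paper's conclusion.
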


\begin{proof}
By the Dimension Lemma \ref{Dimlemma} 
and the hypothesis that $\Phi(\emptyset)\neq \emptyset$ we obtain that $\Phi(\emptyset)$ is a zero-dimensional convex set i.e. a one-point set i.e. $\Phi(\emptyset) =\{ o \}$ for some $o\in {\mathbb E}^{d}$. Further by Proposition \ref{injectivity-on-points-prop} $\Phi(x) \supsetneq \Phi(\emptyset)=\{ o \}$ and thus---taking the Dimension Lemma \ref{Dimlemma} into account---the convex set $\Phi(x)$ is for any $x \in \mathbb{E}^{c}$ one dimensional and thus a proper line-segment. Thus for $x,y\in {\mathbb E}^{c}$ we have 
\begin{equation}\label{intersection-is-o}
x \neq y\ \hbox{ implies }\ \Phi(x) \wedge \Phi(y) = \Phi(\emptyset) = \lbrace o \rbrace\ \hbox{ and }\ \Phi(x) \neq \Phi(y).
\end{equation}
We show next that $o$ has to be an endpoint of the proper line-segment $\Phi(x)$ for any $x\in \mathbb{E}^{c}$. Indirect: Suppose $o \in \relint(\Phi(x))$ and let $y, z$ be points on a line through $x$ such that $y \in  (x,z)$. Then 
\begin{equation}\label{Phi_x_y_z_subset_Phi_x_x}
\Phi(x),\ \Phi(y),\ \Phi(z)\subseteq \Phi(x\vee z).
\end{equation}
Since by application of the Dimension Lemma \ref{Dimlemma} (note that here the hypothesis $c\geq 3$ enters)
\begin{equation*}
\dim(\Phi(x\vee z))\leq \dim(x\vee z) + 1 = 2
\end{equation*}
the segments $\Phi(x)$, $\Phi(y)$, $\Phi(z)$ are contained in a two-dimensional affine subspace $F$ of ${\mathbb E}^{c+1}$ and contain the common point $o$. Denote by $p,q$ the endpoints of $\Phi(x)$, by $r \neq o$ an endpoint of $\Phi(y)$ and denote the endpoints of $\Phi(z)$ by $s,t$. Then by (\ref{Phi_x_y_z_subset_Phi_x_x}) and the fact that $\Phi$ is a lattice-homomorphism
\begin{equation*}
r\in \Phi(y)\subseteq \Phi(x\vee z) = \Phi(x)\vee \Phi(z)=\triangle (p,q,s)\cup \triangle (p,q,t).
\end{equation*} 
We suppose without loss of generality that $r\in \triangle(p,q,s)$.
Note that by (\ref{intersection-is-o}) we have that $r\notin \aff([p,q])$ and thus that the triangle $\triangle (p,q,s)$ is non degenerate. Thus, since $o \in (p,q)$, we obtain by application of Proposition \ref{Dreiecksprop} 
\begin{equation*}
\begin{split}
\emptyset \neq \relint([s,o] \cap \triangle (r,p,q))\setminus \{ o \} \subseteq (\Phi(z)\wedge \Phi(x\vee y))\setminus \{ o \} & \\
=\Phi(z\wedge (x\vee y))\setminus \{ o \} = \Phi(\emptyset)\setminus \{ o \} =\{ o \}\setminus \{ o \} & = \emptyset.
\end{split}
\end{equation*}
\emph{Contradiction}. Hence $\Phi$ maps any one-point set to a proper line-segment containing $o$ as one of its endpoints.\\ \\
We can thus define a function $\phi: \mathbb{E}^{c} \mapsto \mathbb{E}^{c+1}$ that maps any $x \in \mathbb{E}^{c}$ to the unique endpoint of $\Phi(x)$ that differs from $o$ i.e. $\phi(x)$ is implicitly given by $[\phi(x),o]:=\Phi(x)$. Note that $\phi$ is by (\ref{intersection-is-o}) injective.\\ \\
We show next that $\phi$ preserves the order of points i.e. we show that 
\begin{equation}\label{convex-in-convex}
x \in (s_1,s_2)\ \hbox{ implies }\ \phi(x) \in (\phi(s_1),\phi(s_2))
\end{equation}
Let $x, s_1, s_2 \in \mathbb{E}^{c}$ be such that $x \in (s_1,s_2)$ and let $u_1,v_{1} \in \mathbb{E}^{c}$ be points such that $s_1 \in (u_1,v_{1})$ and $(u_{1},v_{1}) \nparallel (s_{1}.s_{2})$. By Proposition \ref{existence-of-second-line} we obtain $u_{2},v_2 \in \mathbb{E}^{c}$ such that $s_{2}\in (u_{2},v_{2})$ and $x,s_1,s_2,u_1,v_{1},u_{2},v_2$ are located in one and the same plane such that
\begin{equation}\label{x=cap}
\begin{split}
\{ x \} = (u_{1},v_{2})\cap (v_{1}, u_{2})=(u_{1},v_{2})\cap (s_{1},s_{2})=(v_{1},u_{2})\cap (s_{1},s_{2})\ \hbox{and}\\[0.1cm]
\{ x \} = [u_{1},v_{2}]\cap [v_{1}, u_{2}]=[u_{1},v_{2}]\cap [s_{1},s_{2}]=[v_{1},u_{2}]\cap [s_{1},s_{2}].\ \hbox{\ \ }
\end{split}
\end{equation}
Since $\Phi$ is a homomorphism we obtain from (\ref{x=cap}) that
\begin{equation}\label{E(x)=cap}
\Phi(x)=\Phi(u_1 \vee v_2) \wedge \Phi(s_{1}\vee s_{2}).
\end{equation}
Note that by (\ref{x=cap}), (\ref{intersection-is-o}) and the definition of $\phi$ we obtain that
\begin{equation}\label{f(x)-notin}
\begin{split}
\phi(x) \notin \Phi(u_{1})\cup \Phi(v_{2}) = [o,\phi(u_{1})]\cup [o,\phi(v_{2})]\ \hbox{ and }\\[0.1cm]
\phi(x) \notin \Phi(s_{1}) \cup \Phi(s_{2}) = [o,\phi(s_{1})]\cup [o,\phi(s_{2})].\ \hbox{\ \ }
\end{split}
\end{equation}
Note further that by definition of $\phi$
\begin{equation}\label{conv=triangle}
\begin{split}
\Phi(u_1 \vee v_2) = \Phi(u_1) \vee \Phi(v_2) = [\phi(u_{1}),o]\vee [\phi(v_{2}),o] = \triangle (o, \phi(u_1), \phi(v_2))\ \hbox{ and }\\[0.1cm]
\Phi(s_{1}\vee s_{2}) = \Phi(s_1) \vee \Phi(s_2) = [\phi(s_{1}),o]\vee [\phi(s_2),o] = \triangle (o, \phi(s_1), \phi(s_2)).\ \hbox{\ \ }
\end{split}
\end{equation}
From (\ref{E(x)=cap}) and (\ref{conv=triangle}) we obtain
\begin{equation}\label{f(x)-o-is-intersection}
\begin{split}
[\phi(x),o] & = \Phi(x)=\Phi(u_1 \vee v_2) \wedge \Phi(s_{1}\vee s_{2})\\[0.1cm] 
& =  \triangle(o,\phi(u_{1}),\phi(v_{2})) \cap \triangle(o,\phi(s_{1}),\phi(s_{2})).
\end{split}
\end{equation}
We proceed indirectly: Suppose that $\phi(x) \notin [\phi(s_1),\phi(s_2)]$ i.e that (\ref{convex-in-convex}) were not fulfilled.
Then by (\ref{f(x)-o-is-intersection}) and the second line in (\ref{f(x)-notin})
\begin{equation}\label{f(x)-notin-relint}
\phi(x) \in \relint(\triangle (o, \phi(s_1), \phi(s_2))).
\end{equation}
From (\ref{f(x)-notin-relint}), (\ref{f(x)-o-is-intersection}) and the first line of (\ref{f(x)-notin}) we obtain by application of Proposition \ref{relint-two-triangles-intersection-prop} with $y=\phi(x), p=\phi(s_1), q=\phi(s_2), r=\phi(u_1)$ and $s=\phi(v_2)$ that 
\begin{equation}\label{f(x)-in-(f(u_1),f(v_2)}
\phi(x)\in (\phi(u_{1}),\phi(v_{2})).
\end{equation}
Analogously one shows that
\begin{equation}\label{f(x)-in-(f(u_1),f(v_2)-analogon}
\phi(x)\in (\phi(v_{1}),\phi(u_{2})). 
\end{equation}
Since by (\ref{f(x)-in-(f(u_1),f(v_2)}) and (\ref{f(x)-in-(f(u_1),f(v_2)-analogon}) the non-degenerate line-segments $(\phi(u_{1}),\phi(v_{2}))$ and $(\phi(v_{1}),\phi(u_{2}))$ intersect in the common point $\phi(x)$ we obtain that\footnote{In fact the affine space $F$ defined in $(*)$ has to be a plane or a line. If $F$ were a line we would have, e.g., $\phi (v_1) \in [\phi (u_1), \phi (v_2)]$, which implies
$[o, \phi (v_1)] \subset \Delta \left( o, \phi (u_1), \phi (v_2) \right) 
$. We are going to show that $v_1 \in [u_1, v_2]$.\\ \\
Indirect: In fact, else we have $v_1 \land (u_1 \lor v_2) = \emptyset $ and hence 
\begin{equation*}
\begin{split}
[o, \phi (v_1)] \cap \Delta \left( o, \phi (u_1), \phi (v_2) 
\right) = [o, \phi (v_1)] \land \left( [o, \phi (u_1)] \lor [o, \phi (v_2)] 
\right)\\
= \Phi (v_1) \land 
\left( \Phi (u_1) \lor \Phi (v_2) \right) = \Phi ( \emptyset ) = \{ o \}.
\end{split}
\end{equation*}
Therefore on one hand $[o, \phi (v_1)]$ is a proper 
segment in $\Delta \left( o, \phi (u_1), \phi (v_2) \right) $, and on the 
other hand $[o, \phi (v_1)]$ intersects $\Delta \left( o, \phi (u_1), \phi 
(v_2) \right) $ just in $o$. {\it Contradiction.}\\ \\
Thus $v_1 \in 
[u_1,v_2]$ and hence $v_1 \in [u_1,v_2] \cap [v_1,u_2]$ contradicting (\ref{x=cap}), i.e. we proved that the hypothesis that $F$ is a line is contradictory and thus $F$ has to be a plane.}
\\ \\ 
$(*)$\ \ \ \ \ \ \ \parbox{11cm}{$F:= \aff(\phi(v_{1}),\phi(u_{2}),\phi(u_1),\phi(v_2))$ is a plane with $\phi(x)\in F$.}\\ \\
Let $f_{1}, f_{2}, h\in \{ u_{1}, u_{2}, v_{1}, v_{2} \}$ be such that $f_{1}\neq h\neq f_{2}$. Then for $e_{1}=\phi(f_{1})$, $e_{2}=\phi(f_{2})$ and $w=\phi(h)$ we obtain by application of (\ref{intersection-is-o}) and injectivity of $\phi$ that
\begin{equation}\label{e_1-neq w-neq}
e_{1}\neq w\neq e_{2}
\end{equation}
and further calculate that
\begin{equation}\label{triangle(e_1,e_2,o)}
\begin{split}
\triangle(e_{1},e_{2},o)\cap [w,o] = ([e_{1}, o] \vee [e_{2},o]) \wedge [w,o] = (\Phi(f_{1})\vee \Phi(f_{2}))\wedge \Phi(h)\\
= \Phi((f_{1}\vee f_{2})\wedge h) = \Phi(\emptyset) = \{ o \}.
\end{split}
\end{equation}
From (\ref{e_1-neq w-neq}) and (\ref{triangle(e_1,e_2,o)}) we obtain by application of Proposition \ref{quadrangle-prop} with $a_{i}=\phi(u_{i})$, $b_{i}=\phi(v_{i})$ and $z=\phi(x)$ that $o\notin F$.\\ \\
Let $C:=(o\vee \phi(u_{1})\vee \phi(v_{1})\vee \phi(u_{2})\vee \phi(v_{2}))=\Phi(u_{1})\vee \Phi(v_{1})\vee \Phi(u_{2})\vee \Phi(v_{2})$. Then $\phi(s_{i})\in \Phi(s_{i})\subseteq \Phi(u_{i})\vee \Phi(v_{i})\subseteq C$ and therefore $\phi(s_{1}),\phi(s_{2})\in C$. Since $o \in C\setminus F$ and thus $\relint(\triangle (o,\phi(s_{1}),\phi(s_{2})))\subseteq C\setminus F$ we obtain from (\ref{f(x)-notin-relint}) that $\phi(x)\notin F$ contradicting $(*)$. \emph{Contradiction.}\\ \\
So (\ref{convex-in-convex}) has been shown and thus we know that $\phi$ preserves the order of points. Since $\phi$ is injective and $c\geq 2$ we obtain by application of Corollary \ref{corollary-aff-geometry} that $\phi$ is an affine bijection onto its image $\phi({\mathbb E}^{c})=:H$ and thus further that $H$ is an affine subspace of ${\mathbb E}^{c}$ with $\dim(H)=c$ and thus a hyperplane in ${\mathbb E}^{c+1}$. By construction of $\phi$ we have $\Phi(x) = [o,\phi(x)]$. Hence $\bigcup_{x \in \mathbb{E}^{c}} \Phi(x)$ is convex and by application of Proposition \ref{Prop Bedingungen für Phi} we obtain that 
\begin{equation*}
\Phi(C)=\bigcup_{x\in C} \Phi(x)\cup \Phi(\emptyset)=\bigcup_{x\in C} \Phi(x)\cup \{ o \} = \bigcup_{x\in C} \Phi(x).
\end{equation*}
The lemma is proved.
\end{proof}

\section{The case of a dimension-gap}\label{section-dim-gape}

\begin{proposition}\label{more-than-one-line-segment-prop}
Let $\Phi:{\mathscr C}({\mathbb E}^{c})\to {\mathscr C}({\mathbb E}^{c+1})$ be a non-trivial lattice homomorphism with $\Phi(\emptyset)=\emptyset$ and suppose that $c\geq 3$. Let $x,y\in {\mathbb E}^{c}$ be arbitrary (and note that we do not suppose that $x\neq y$). Then there exists a plane $F$ such that
\begin{equation}\label{z-not-in-affine}
F\supset \aff(x,y)\ \hbox{ and }\ \forall z\in {\mathbb E}^{c}\setminus F\quad \Phi(z)\nsubseteq \aff(\Phi(x)\vee \Phi(y))
\end{equation}
In the case that $\Phi(x)$ is a proper line-segment, there exists some $z\in {\mathbb E}^{c}\setminus F$ such that $\Phi(z)$ is additionally a proper line-segment.
\end{proposition}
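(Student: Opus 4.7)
The plan is to introduce $A := \aff(\Phi(x) \vee \Phi(y))$ and study the set $S := \{z \in \mathbb{E}^{c} \mid \Phi(z) \subseteq A\}$; I will show that $S$ is convex of dimension at most $2$ and then take $F$ to be any plane containing $\aff(S)$. For the refinement I will suppose to the contrary that every $z \notin F$ has $\Phi(z)$ a singleton and derive a contradiction with Proposition \ref{dimprop}.

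First, since $\Phi$ is a lattice homomorphism, $\Phi(x) \vee \Phi(y) = \Phi(x \vee y)$, so $A = \aff(\Phi(x \vee y))$; because $\dim(x \vee y) \leq 1 \leq c-2$, the Dimension Lemma~\ref{Dimlemma} yields $\dim A \leq 2$. Convexity of $S$ is immediate: for $z_{1}, z_{2} \in S$ and $w \in [z_{1}, z_{2}]$, one has $\Phi(w) \subseteq \Phi(z_{1}) \vee \Phi(z_{2}) \subseteq A$ by convexity of $A$. To bound $\dim S$, take any affinely independent $z_{1}, \ldots, z_{k} \in S$; the polytope $P := z_{1} \vee \cdots \vee z_{k}$ satisfies $\Phi(P) = \bigvee_{i} \Phi(z_{i}) \subseteq A$, hence $\dim \Phi(P) \leq \dim A \leq 2$, and Proposition \ref{dimprop} (applicable since $\Phi(\emptyset) = \emptyset$) then gives $k-1 = \dim P \leq 2$. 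Consequently $\dim S \leq 2$, and since $x, y \in S$ we have $\aff(x,y) \subseteq \aff(S)$. Choosing $F$ to be any plane containing $\aff(S)$ settles the first part, because $z \notin F$ forces $z \notin S$ and hence $\Phi(z) \nsubseteq A$.

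For the second statement, assume $\Phi(x)$ is a proper line-segment and suppose for contradiction that every $z \in \mathbb{E}^{c} \setminus F$ has $\Phi(z)$ a one-point set $\{\phi(z)\}$. Given such a $z$, the line $\ell$ through $x$ and $z$ is not contained in the plane $F$ (since $z \in \ell \setminus F$), so $\ell \cap F = \{x\}$; thus we may pick $z' \in \ell$ with $x \in (z, z')$, and automatically $z' \notin F$. From $x \in z \vee z'$ we obtain $\Phi(x) \subseteq \Phi(z) \vee \Phi(z') = [\phi(z), \phi(z')]$, and since $\Phi(x)$ is a proper segment this forces $\aff(\Phi(x)) = \aff(\{\phi(z), \phi(z')\})$, so in particular $\phi(z) \in \aff(\Phi(x))$. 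Thus $\phi$ sends all of $\mathbb{E}^{c} \setminus F$ into the one-dimensional line $\aff(\Phi(x))$.

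To conclude, pick four affinely independent points $z_{1}, \ldots, z_{4} \in \mathbb{E}^{c} \setminus F$, which exist because $F$ is two-dimensional and $c \geq 3$. The simplex $P' := z_{1} \vee \cdots \vee z_{4}$ satisfies $\dim P' = 3$, while $\Phi(P') = \bigvee_{i} \{\phi(z_{i})\}$ is the convex hull of four collinear points and so has dimension at most $1$, contradicting Proposition \ref{dimprop}. The main obstacle is recognizing that the hypothetical ``$\Phi(z)$ a singleton for every $z \notin F$'' collapses all these images into a single line via the line-through-$x$ trick; once this is in place, the Radon-based lower dimension bound closes the argument immediately.
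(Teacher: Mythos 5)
Your proof is correct. For the first assertion your packaging (the set $S$ of points whose images land in $A:=\aff(\Phi(x)\vee\Phi(y))$ is convex of dimension at most $2$, then take $F\supseteq\aff(S)$) is essentially a reorganisation of the paper's indirect argument, which instead directly produces a three-dimensional simplex $x\vee y\vee s\vee t$ whose image would have to lie in the at most two-dimensional set $A$, contradicting Proposition \ref{dimprop}; the ingredients (Lemma \ref{Dimlemma} for $\dim A\leq 2$, Proposition \ref{dimprop} for the lower bound) are the same. For the second assertion you genuinely diverge. The paper also reflects through $x$ (considering $w$ and $2x-w$), but then runs a case analysis on whether $\Phi(2x-w)$ is a point or a proper segment and ultimately derives $\Phi(w)\subseteq A$, contradicting the already-established first part. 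You instead observe that if every $\Phi(z)$ with $z\notin F$ were a singleton $\{\phi(z)\}$, the reflection trick forces every such $\phi(z)$ onto the single line $\aff(\Phi(x))$, after which one further application of Proposition \ref{dimprop} to a $3$-simplex avoiding $F$ closes the argument; this is arguably cleaner and avoids the case distinction. Two pedantic remarks, neither affecting correctness: (a) to pass from the negation of the claim to ``every $\Phi(z)$ with $z\notin F$ is a singleton'' you should note, as the paper does, that by Lemma \ref{Dimlemma} and Proposition \ref{injectivity-on-points-prop} each $\Phi(z)$ is either a one-point set or a proper line-segment; (b) the existence of four affinely independent points in ${\mathbb E}^{c}\setminus F$ deserves a one-line justification, e.g.\ choose $z_{i}$ successively outside the union of $F$ with $\aff(z_{1},\dots,z_{i-1})$, which is a union of two proper affine subspaces.
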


\begin{proof} 
The statement for $x=y$ follows from the statement for $x \ne y$, so we suppose $x \ne y$. We proceed indirectly. Suppose that formula (\ref{z-not-in-affine}) does not hold. Then we can find a three dimensional simplex $P=x \vee y \vee s \vee t\subseteq {\mathbb E}^{c}$ (note that here the hypothesis $c\geq 3$ enters) such that $\Phi (P) \subseteq \aff ( \Phi (x) \vee \Phi (y))$. Hence $\dim \Phi (P) \leq \dim \Phi (x \vee y) \leq 2$ by Lemma \ref{Dimlemma} and $c \ge 3$. This contradicts Proposition  \ref{dimprop}. Thus formula (\ref{z-not-in-affine}) holds.\\ \\ 
Next we show that if $\Phi(x)$ is a proper line-segment, then we can choose $z\in {\mathbb E}^{c}\setminus F$ such that $\Phi(z)$ is also a proper line-segment. Note that by Lemma \ref{Dimlemma} and Proposition \ref{dimprop} it is clear that $\Phi(z)$ is either a point or a proper line-segment.\\ \\
We again proceed indirectly and suppose that 
\begin{equation}\label{E(w)-notin-affine}
z \in {\mathbb E}^{c}\setminus F\ \hbox{ and }\ \Phi(z)\nsubseteq \aff(\Phi(x)\vee \Phi(y))\ \hbox{ implies that }\ \Phi(z)=\{ p_{z} \}
\end{equation}
for some $p_{z}\in {\mathbb E}^{c+1}$. Since $x\in F$ we have
\begin{equation}\label{w-notin-2x-w-notin}
w \in {\mathbb E}^{c}\setminus F\ \Rightarrow\ (2x-w)\in {\mathbb E}^{c}\setminus F.
\end{equation}
Further
\begin{equation}\label{E(x)-sub-E(w)-E(2x-w)}
(\forall w\in {\mathbb E}^{c})\ \ \Phi(x)\subseteq \Phi(w)\vee \Phi(2x-w)
\end{equation}
and
\begin{equation}\label{Phi(x)_cap_Phi(2x-w)}
(\forall w\in {\mathbb E}^{c}\setminus F)\ \ \Phi(x)\cap \Phi(2x-w) = \Phi(\emptyset) = \emptyset.
\end{equation}
Let $w \in {\mathbb E}^{c}\setminus F$. Then in accordance with (\ref{z-not-in-affine}) 
\begin{equation}\label{E(w)-notin-affine-second}
\Phi(w)\nsubseteq \aff(\Phi(x)\vee \Phi(y))
\end{equation}
In case that $\Phi(w)$ and $\Phi(2x-w)$ were one-point sets we would obtain from the hypothesis that $\Phi(x)$ is a proper line-segment and from (\ref{E(x)-sub-E(w)-E(2x-w)}) that 
\begin{equation*}
\aff(\Phi(x)) = \aff(\Phi(w)\vee \Phi(2x-w))
\end{equation*}
contradicting (\ref{E(w)-notin-affine-second}). Thus---since by (\ref{E(w)-notin-affine-second}) and 
(\ref{E(w)-notin-affine}) we have that $\Phi(w)= \{ p_{w} \}$ is a one-point set---we obtain that $\Phi(2x-w)$ has to be a proper line-segment. From the now established fact that $\Phi(2x-w)$ has to be a proper line-segment, (\ref{w-notin-2x-w-notin}) and hypothesis (\ref{E(w)-notin-affine}) with $z=2x-w$ we obtain that 
\begin{equation}\label{Phi-2x-w-subset-affine}
\Phi(2x-w)\subseteq \aff(\Phi(x)\vee \Phi(y))
\end{equation}
Further since $\Phi(w)=\{ p_{w} \}$ is a one-point set we obtain from (\ref{E(x)-sub-E(w)-E(2x-w)}) and (\ref{Phi(x)_cap_Phi(2x-w)}) that
\begin{equation*}
\Phi(w)\subseteq aff(\Phi(x)\vee \Phi(2x-w))
\end{equation*}
and thus by (\ref{Phi-2x-w-subset-affine}) that $\Phi(w)\subseteq \aff(\Phi(x)\vee \Phi(y))$ contradicting (\ref{E(w)-notin-affine-second}).
\end{proof}

\begin{notation}\label{intervals-to-lines-notation}
Given $\Phi:{\mathscr C}({\mathbb E}^{c})\to {\mathscr C}({\mathbb E}^{c+1})$ we let 
\begin{equation}
G:=\{ x\in {\mathbb E}^{c} \mid \Phi(x)\ \hbox{is a proper line-segment}\}.
\end{equation}
We further let ${\cal G}:=\{ \aff(\Phi(x)) \mid x\in G \}$ and note that the elements of ${\cal G}$ are lines in ${\mathbb E}^{c+1}$.
\end{notation}

\begin{proposition}\label{for-any-two-lines-a-third-prop}
Let $c\geq 3$ and let $\Phi:{\mathscr C}({\mathbb E}^{c})\to {\mathscr C}({\mathbb E}^{c+1})$ be a non-trivial lattice homomorphism with $\Phi(\emptyset)=\emptyset$. Suppose that there exists some $x\in {\mathbb E}^{c}$ such that $\Phi(x)$ is a proper line-segment.
Then for any (not necessarily distinct) lines $g_{1}, g_{2}\in {\cal G}$ there exists a line $g_{0}\in {\cal G}$ such that $g_{0}\nsubseteq \aff(g_{1}\cup g_{2})$ and ${\cal G}$ consists  
of at least three distinct lines $g_{0},g_{1},g_{2}$ such that $g_{0}\nsubseteq \aff(g_{1}\cup g_{2})$.
\end{proposition}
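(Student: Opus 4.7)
The plan is to deduce everything from Proposition \ref{more-than-one-line-segment-prop}, essentially by applying it twice.

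First I would prove the main assertion that for any $g_1,g_2\in {\cal G}$ there exists $g_0\in{\cal G}$ with $g_0\nsubseteq\aff(g_1\cup g_2)$. By definition of ${\cal G}$, choose $x_1,x_2\in G$ with $g_i=\aff(\Phi(x_i))$; since $x_1\in G$, the set $\Phi(x_1)$ is a proper line-segment. Apply Proposition \ref{more-than-one-line-segment-prop} with $x=x_1$ and $y=x_2$ (the case $x_1=x_2$ is explicitly permitted): this yields a plane $F\supseteq\aff(x_1,x_2)$ and, by the additional assertion of that proposition, a point $z\in{\mathbb E}^{c}\setminus F$ such that $\Phi(z)$ is a proper line-segment and $\Phi(z)\nsubseteq\aff(\Phi(x_1)\vee\Phi(x_2))$. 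Then $z\in G$, so $g_0:=\aff(\Phi(z))\in{\cal G}$, and since $\aff(g_1\cup g_2)=\aff(\Phi(x_1)\cup\Phi(x_2))=\aff(\Phi(x_1)\vee\Phi(x_2))$, we conclude $g_0\nsubseteq\aff(g_1\cup g_2)$.

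Next I would obtain the statement that ${\cal G}$ contains at least three distinct lines $g_0,g_1,g_2$ with $g_0\nsubseteq\aff(g_1\cup g_2)$ by iterating the result just shown. By hypothesis $G\neq\emptyset$, so ${\cal G}\neq\emptyset$; pick $h_1\in{\cal G}$ and apply the first part to the pair $g_1=g_2=h_1$ to obtain $h_0\in{\cal G}$ with $h_0\nsubseteq\aff(h_1)=h_1$, in particular $h_0\neq h_1$. Apply the first part again to the two distinct lines $h_0,h_1$ to obtain $h_2\in{\cal G}$ with $h_2\nsubseteq\aff(h_0\cup h_1)$. Since $h_0,h_1\subseteq\aff(h_0\cup h_1)$ but $h_2$ is not, $h_2$ is distinct from both $h_0$ and $h_1$, and taking $g_0:=h_2$, $g_1:=h_0$, $g_2:=h_1$ produces three distinct lines of ${\cal G}$ with the required non-coplanarity property.

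I do not anticipate any real obstacle: the argument is essentially a bookkeeping exercise built on the statement of Proposition \ref{more-than-one-line-segment-prop}. The only subtle point is remembering to invoke the second clause of that proposition---namely that when $\Phi(x)$ is a proper segment one can choose $z$ so that $\Phi(z)$ is also a proper segment---since without this clause the produced line $g_0=\aff(\Phi(z))$ would not automatically lie in ${\cal G}$.
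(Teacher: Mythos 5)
Your proof is correct and follows exactly the route the paper intends: the paper's own proof is the one-line remark that the proposition is a consequence of Proposition \ref{more-than-one-line-segment-prop}, and you have simply filled in the bookkeeping (including the essential use of the second clause guaranteeing that $\Phi(z)$ is again a proper segment, and the two-step iteration to produce three distinct lines). No gaps.
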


\begin{proof}
This is a consequence of Proposition \ref{more-than-one-line-segment-prop} (and Notation \ref{intervals-to-lines-notation}).
\end{proof}

\begin{lemma}\label{empty-to-empty-lemma}
Let $\Phi: \mathscr{C}(\mathbb{E}^{c}) \longrightarrow \mathscr{C}(\mathbb{E}^{c+1})$ be a non-trivial lattice homomorphism, let $c \geq 3$ and suppose that $\Phi(\emptyset)=\emptyset$ and $\Phi$ maps some point to some set of dimension $\geq 1$. Then $\Phi$ maps any one-point set to a proper line-segment such that for arbitrary $x\neq y\in {\mathbb E}^{c}$ we have $\Phi(x)\wedge \Phi(y)=\emptyset$ and precisely one of the following two cases holds:\\ \\
a) All line-segments $\Phi(x)$ are contained in different rays emanating from one common point $o$ and there exists a hyperplane $H\subset \mathbb{E}^{c+1}$ with $o\notin H$ and an affine bijection $\phi:{\mathbb E}^{c}\to H$ and a constant $\gamma \in (0,1)$ such that 
\begin{equation}\label{segments-on-rays-result}
\Phi(x) = [\phi(x),\gamma \phi(x) + (1-\gamma) o] \ \hbox{ and }\ \Phi(C) = \bigcup_{x\in C} \Phi(x).
\end{equation}  
b) All line-segments $\Phi(x)$ are parallel to some vector $v$ and of the same length as $v$ and there exist a hyperplane $H \subset \mathbb{E}^{c+1}, H \nparallel v$ and an affine bijection $\phi:{\mathbb E}^{c}\to H$ such that 
\begin{equation}\label{parallel-result}
\Phi(x) = [\phi(x),\phi(x)+v]\ \hbox{ and }\ \Phi(C) = \bigcup_{x\in C} \Phi(x).
\end{equation} 
\end{lemma}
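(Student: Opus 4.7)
The plan is as follows. By Proposition~\ref{dimprop} and the Dimension Lemma~\ref{Dimlemma} applied at $\dim(\{x\})=0$, each image $\Phi(x)$ has dimension in $\{0,1\}$, i.e.\ is either a one-point set or a proper line-segment. For distinct $x,y\in{\mathbb E}^c$ one has $\Phi(x)\cap\Phi(y)=\Phi(x\wedge y)=\Phi(\emptyset)=\emptyset$. Let $G$ and $\mathcal{G}$ be as in Notation~\ref{intervals-to-lines-notation}. Propositions~\ref{more-than-one-line-segment-prop} and~\ref{for-any-two-lines-a-third-prop} then guarantee an abundant supply of points whose image is a proper segment, and in particular three lines of $\mathcal{G}$ with the third not lying in the affine hull of the other two.

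Next I would establish the structural dichotomy that all lines of $\mathcal{G}$ pass through a single common point $o$ (case a) or are all parallel to a common direction $v$ (case b). Coplanarity of any pair $\aff(\Phi(y)),\aff(\Phi(z))$ with $y,z\in G$ follows by considering $\Phi(y\vee z)$, which contains both segments and has dimension at most $2$ by the Dimension Lemma; within a coplanar pair the two lines either meet or are parallel. The global upgrade---propagating pairwise concurrence to a single common point, propagating pairwise parallelism to a common direction, and excluding any mixture of the two behaviours---is the main obstacle of the lemma, and is handled by invoking the transversality theorems of Appendix~\ref{transversality-theorems} (Theorem~\ref{transversality theorem} for segments directed at a common point, together with its parallel counterpart), whose Helly-type conclusions produce the point $o$ or the direction $v$.

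With this dichotomy in hand I apply Lemma~\ref{radon-affine-dependence-lemma} in case (a) and Lemma~\ref{radon-affine-dependence-lemma-parallel} in case (b) to the rays, resp.\ parallel lines, carrying the segments $\Phi(x)$ for $x\in G$. Alternative~(\ref{first-case}) of those lemmas would force every $\Phi(x)$ to be a one-point set, contradicting $G\neq\emptyset$; hence alternative~(\ref{second-case}) must hold. A maximality argument in the spirit of the proof of Lemma~\ref{radon-affine-dependence-lemma} then produces a single hyperplane $H$ (with $o\notin H$, resp.\ $H\nparallel g$) transverse to every $\Phi(x)$, $x\in G$, so that $\{\phi(x)\}:=\Phi(x)\cap H$ is a well-defined injection of $G$ into $H$. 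Propositions~\ref{preservation-of-point-order-prop} and~\ref{preservation-of-point-order-prop-prallel} show $\phi$ preserves the order of points, whence Corollary~\ref{corollary-aff-geometry} promotes $\phi$ to an affine bijection onto the hyperplane $H$, once $G={\mathbb E}^c$ is known.

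Finally, to eliminate one-point images (so $G={\mathbb E}^c$) and to pin down the second endpoint of each $\Phi(x)$: a hypothetical $x_0\in{\mathbb E}^c\setminus G$ would have $\Phi(x_0)$ forced onto the ray from $o$ through some point of $H$ (resp.\ onto a line parallel to $v$), and the disjointness of $\Phi(x_0)$ from the segments $\Phi(y)$ associated with any $y\in G$ on lines through $x_0$ contradicts the affine bijectivity of $\phi$ on $G$. Parameterising the ``inner'' endpoints by $x\mapsto\psi(x)$, one checks as for $\phi$ that $\psi$ preserves the order of points and, via Proposition~\ref{affine-bijection-prop} in case (a) and Proposition~\ref{affine-bijection-prop-parallel} in case (b), that $\psi$ is an affine bijection onto a hyperplane parallel to $H$. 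Proposition~\ref{ray hyperlane intersection existance gamma} in case (a) then supplies the constant $\gamma\in(0,1)$ independent of $x$ with $\psi(x)=\gamma\phi(x)+(1-\gamma)o$, while Proposition~\ref{Prop phi+ = phi- + v} in case (b) delivers the common translation vector $v$ with $\phi^+(x)=\phi^-(x)+v$. The identity $\Phi(C)=\bigcup_{x\in C}\Phi(x)$ follows by verifying hypotheses (\ref{Bedingung i})--(\ref{Bedingung iv}) of Proposition~\ref{Prop Bedingungen für Phi}.
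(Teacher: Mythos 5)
Your skeleton matches the paper's proof in most respects, but you have assigned the two key tools to the wrong steps, and the misassignment is not merely cosmetic. The transversality theorems of Appendix~\ref{transversality-theorems} cannot ``produce the point $o$ or the direction $v$'': Theorem~\ref{transversality theorem} takes $o$ as given and requires every segment to lie on a ray emanating from $o$ (and to avoid $o$), and Theorem~\ref{parallel-transversality-theo} likewise presupposes the common direction $g$. Using them to establish concurrence or parallelism of the lines in $\mathcal{G}$ is therefore circular. The dichotomy is in fact obtained elementarily: the Dimension Lemma~\ref{Dimlemma} makes any two lines of $\mathcal{G}$ coplanar, hence meeting or parallel; one then splits on whether $\mathcal{G}$ contains two distinct parallel lines. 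If not, all pairs meet, and Proposition~\ref{Geradenprop} (applicable because Proposition~\ref{for-any-two-lines-a-third-prop} supplies three lines not contained in a common plane) yields the common point $o$; if so, a short argument via Proposition~\ref{more-than-one-line-segment-prop} propagates parallelism to all of $\mathcal{G}$. The transversality theorems are instead exactly the tool for the step you attribute to ``a maximality argument in the spirit of Lemma~\ref{radon-affine-dependence-lemma}'': Lemma~\ref{radon-affine-dependence-lemma}(ii) only gives, for each $(c+2)$-point set $S$, \emph{some} hyperplane missing $o$ and meeting all $\Phi(x)$ with $x\in S$, and it is Helly's theorem, via Theorem~\ref{transversality theorem}, that upgrades this to a single hyperplane $F$ meeting every $\Phi(x)$.

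Two further steps are missing. First, before either Lemma~\ref{radon-affine-dependence-lemma} or Theorem~\ref{transversality theorem} can be invoked in case (a) you must show that $o\notin\Phi(v)$ for all $v$ and that distinct points have images on distinct rays (at most one $\Phi(x)$ meets a given ray off $o$); the paper devotes separate steps to this, using Propositions~\ref{more-than-one-line-segment-prop} and~\ref{different-rays-two-dim-prop}. Second, Proposition~\ref{affine-bijection-prop} only applies once one already knows that the outer endpoints $\phi(x)$ all lie on a \emph{single} hyperplane parallel to $F$; establishing that common hyperplane is a separate argument (the paper's ``above/below'' argument, letting auxiliary points tend to infinity along lines through a fixed point), and your proposal passes over it. The remainder --- order preservation via Proposition~\ref{preservation-of-point-order-prop}, the main theorem of affine geometry, Propositions~\ref{ray hyperlane intersection existance gamma} and~\ref{Prop phi+ = phi- + v} for $\gamma$ and $v$, and Proposition~\ref{Prop Bedingungen für Phi} for $\Phi(C)=\bigcup_{x\in C}\Phi(x)$ --- agrees with the paper.
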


\begin{proof} 
Let $\Phi$ fulfil the hypotheses of the lemma. Since $\Phi(\emptyset)=\emptyset$ the images of distinct points are disjoint. We know from Lemma \ref{Dimlemma} and Proposition \ref{dimprop} that points can only be mapped to $0$-dimensional or $1$-dimensional convex bodies i.e. to points or proper line-segments. Thus according to our hypotheses some point in ${\mathbb E}^{c}$ is mapped to a proper line-segment. We thus obtain from Proposition \ref{for-any-two-lines-a-third-prop}---in the notation introduced in \ref{intervals-to-lines-notation}---that ${\cal G}$ contains three distinct lines $g_{0},g_{1},g_{2}$ such that $g_{0}\nsubseteq \aff(g_{1},g_{2})$ and we distinguish the following two cases:
\begin{enumerate} [(i)]
\item Any two distinct lines in ${\cal G}$ are not parallel 
\label{non-parallel-case}
\item There exist two distinct parallel lines in ${\cal G}$. 
\label{case a}
\end{enumerate}
We are going to show that (\ref{non-parallel-case}) implies that case a) of the Lemma holds, while (\ref{case a}) implies that case b) holds. Since (\ref{non-parallel-case}) and (\ref{case a}) cover all possible situations we thus obtain that there exist beside a) and b) no further cases.\\ \\
Note that our hypotheses imply that 
\begin{equation}\label{intersection-equals-empty-set}
(\forall q,w  \in {\mathbb E}^{c})\quad q\neq w\ \Rightarrow\ \Phi(q)\cap \Phi(w)=\emptyset
\end{equation}
\begin{enumerate}[(A)]
\item We consider case (\ref{non-parallel-case}) first 
i.e. for distinct lines $g_{1},g_{2}\in {\cal G}$ we have $g_{1}\nparallel g_{2}$.
\begin{enumerate}[\hspace{-0.8em} (1)]
\item\label{I.1} We show that all lines that are elements of ${\cal G}$ intersect in some common point $o$.
Let $g_1, g_2 \in \mathcal{G}$ be distinct lines and let $x,y\in {\mathbb E}^{c}$ be arbitrary points such that 
\begin{equation*}
\aff(\Phi(x))=g_{1},\ \ \aff(\Phi(y))=g_{2}.
\end{equation*}
By application of the Dimension Lemma \ref{Dimlemma} (note that here the hypothesis $ c \geq 3$ enters; compare with Remark \ref{Remark c geq 3}) we obtain that 
\begin{equation*}
\dim (\Phi(x) \vee \Phi(y)) = \dim(\Phi(x\vee y)) \leq \dim(x\vee y) +1 = 2
\end{equation*}
and thus there exists a plane $F$ with $g_{1}, g_{2} \subseteq F$. Thus $g_{1} \nparallel g_{2}$ implies that the lines $g_{1}$ and $g_{2}$ intersect i.e. any two distinct lines $g_{1}, g_{2}\in {\cal G}$ intersect. By Proposition \ref{for-any-two-lines-a-third-prop} there exist three different lines $g_{0}, g_{1}, g_{2}\in {\cal G}$ such that $g_{0}\nsubseteq \aff(g_{1},g_{2})$
and thus by Proposition \ref{Geradenprop} all lines in ${\cal G}$ intersect in some common point $o$. Thus (\ref{I.1}) has been proved.
\item\label{I.2} We show that for any ray $r$ emanating from $o$ there does not exist more than one $x\in {\mathbb E}^{c}$ with $(\Phi(x)\cap r)\setminus \{ o \} \neq \emptyset$.\\ \\
Indirect: Suppose that $x,y\in {\mathbb E}^{c}$ are distinct points such that\linebreak  $(\Phi(x)\cap r)\setminus \{ o \} \neq \emptyset$ and $(\Phi(y)\cap r)\setminus \{ o \} \neq\emptyset$ and note that by disjointness of $\Phi(x)$ and $\Phi(y)$ we obtain $\aff(\Phi(x)\vee \Phi(y)) \supseteq \aff(r)$ and thus further by (\ref{I.1}) $\aff(\Phi(x)\vee \Phi(y)) = \aff(r)$. By Proposition \ref{more-than-one-line-segment-prop} there exists $z\in {\mathbb E}^{c}\setminus \aff(x,y)$ such that
\begin{equation*}
\Phi(z)\nsubseteq \aff(\Phi(x)\vee \Phi(y)) = \aff(r)
\end{equation*}
and $\Phi(z)$ is a non-degenerate line-segment. By (\ref{I.1}) $\Phi(z)$ is contained in some line $g$ through $o$. Thus we obtain the contradiction
\begin{equation*}
\begin{split}
2 \stackrel{(a)}{\leq} \dim [(\Phi(z)\vee \Phi(x)) \wedge (\Phi(z)\vee \Phi(y))] & = \dim [\Phi((x\vee z)\wedge (y\vee z))]\\ 
& \stackrel{(b)}{=} \dim(\Phi(z)) \stackrel{(c)}{=} \dim(z) + 1 =  1
\end{split}
\end{equation*}
with (a) a consequence of Proposition \ref{different-rays-two-dim-prop}, (b) a consequence of $x\neq y$ and $z\notin \aff(x\vee y)$ and (c) a consequence of the Dimension Lemma \ref{Dimlemma}. Consequently (\ref{I.2}) has been proved.
\item\label{I.3} We show that $(\forall v\in {\mathbb E}^{c})\ o\notin \Phi(v)$. Indirect: Suppose that $o\in\Phi(v)$ and let $v\in (x,z)$. Since $\Phi$ is a homomorphism fulfilling (\ref{intersection-equals-empty-set}) we obtain by application of (\ref{I.1}) 
\begin{equation}\label{o-element-convexhull-x-y}
o\in \Phi(v) \subseteq \Phi(x) \vee \Phi(z)\subseteq g
\end{equation}
for some line $g\in {\mathbb E}^{c+1}$. Chose $y\in (x,v)$. Then
\begin{equation}\label{o-element-convexhull-x-y-2}
o\in \Phi(v) \subseteq \Phi(y) \vee \Phi(z)\subseteq g.
\end{equation}
Since we know from (\ref{intersection-equals-empty-set}) that 
\begin{equation*}
\Phi(x)\cap \Phi(v) = \Phi(y)\cap \Phi(v) = \Phi(z)\cap \Phi(v) = \emptyset
\end{equation*}
and since $\Phi(x), \Phi(y), \Phi(z)$ are convex, we obtain from (\ref{o-element-convexhull-x-y}) and (\ref{o-element-convexhull-x-y-2}) that there exists a ray $r\subseteq g$ emanating from $o$ such that $\Phi(x)$ and $\Phi(y)$ both intersect $r\setminus \{ o \}$ contradicting (\ref{I.2}). Thus (\ref{I.3}) has been shown.
\item\label{I.4} From Lemma \ref{radon-affine-dependence-lemma} (\ref{second-case}) we obtain that\footnote{Note that in case of Lemma \ref{radon-affine-dependence-lemma} (\ref{first-case}) the sets $\Phi(x)$ consist for any $x\in {\mathbb E}^{c}$ just of one single point, contradicting the hypotheses of the Lemma to be proved.} for any $c+2$ element set $S\subset {\mathbb E}^{c}$ there exists a hyperplane $F\subset {\mathbb E}^{c+1}$ with $o\notin F$ that intersects $\Phi(x)$ for any $x\in S$ i.e. $(\forall x\in S)\ \ \Phi(x)\cap F\neq \emptyset$.
\item\label{I.5} By application of the Transversality Theorem \ref{transversality theorem} of Appendix \ref{transversality-theorems} we obtain from (\ref{I.4}), (\ref{I.3}) and (\ref{I.1}) that there exists a hyperplane $F\subset {\mathbb E}^{c+1}$ with $o\notin F$ that intersects $\Phi(x)$ for any $x\in {\mathbb E}^{d}$ in a single point $p_{x}$. Since by (\ref{intersection-equals-empty-set}) $x\neq y$ implies $\Phi(x)\cap \Phi(y)=\emptyset$ and thus $p_{x}\neq p_{y}$ the function $\psi:{\mathbb E}^{c}\to F$ given by $\psi(x):=p_{x}$ is injective.
\item\label{I.6} By Proposition \ref{preservation-of-point-order-prop} the function $\psi:{\mathbb E}^{c}\to F$ defined in (\ref{I.5}) preserves the order of points and we obtain from Corollary \ref{corollary-aff-geometry} that $\psi$ is an affine bijection between ${\mathbb E}^{c}$ and $F$.
\item\label{I.7} Define the functions $\phi:{\mathbb E}^{c}\to {\mathbb E}^{c+1}$ and $\hat{\phi}: \mathbb{E}^{c} \to \mathbb{E}^{c+1}$ by letting $\phi (v)$, and 
${\hat {\phi }}(v)$ the farthest, and closest point of $\Phi (v)$ to $o$. 
Then we have 
\begin{equation} \label{Phi ist interval phi, phi dach}
\Phi(v) = [\phi(v), \hat{\phi}(v)].
\end{equation}
We show that the points $\phi(v)$ are located on a common hyperplane $H$ parallel to\footnote{Note that $H$ and $F$ may coincide.} $F$ i.e. we show that 
\begin{equation}\label{phi Dach in G}
\exists H \parallel F \hbox{ such that } H \hbox{ is a hyperplane and } (\forall v \in \mathbb{E}^{c})\ \phi(v) \in H.
\end{equation}
and thus---remind that $\psi:{\mathbb E}^{c}\to {\mathbb F}$ is an affine bijection and apply Proposition \ref{affine-bijection-prop}---that $\phi$ is an affine bijection between ${\mathbb E}^{c}$ and $H$.\\ \\
To this end, given a hyperplane $H \parallel F$, we say that $x \in \mathbb{E}^{c+1}$ lies above $H$ if $x$ is an element of the open half-space with boundary $H$ that does not contain $o$, while we say that $x$ lies below $H$ if $x$ is an element of the closed half-space with boundary $H$ that contains $o$. We proceed indirectly.\\ \\ 
Suppose that (\ref{phi Dach in G}) were not fulfilled. Then there exists a hyperplane $H \parallel F$ such that $\phi(x)$ lies above $H$ and $\phi(y)$ lies below $H$ for some points $x,y \in \mathbb{E}^{c+1}$. Either
\begin{equation} \label{case x,z above. y below}
\exists z \in \mathbb{E}^{c} \hbox{ with } z \notin \aff(x,y)\hbox{ such that } \phi(z) \hbox{ lies above }H
\end{equation}
or for all points $p \in \mathbb{E}^{c}$ with $p \notin \aff(x,y)$ we have that $\phi(p)$ lies below $H$. The second case is contradictory since for any such $p$ we have that $z:=2x-p \notin \aff(x,y)$ and $\phi(x) \in \Phi(p) \vee \Phi(z)$ and thus $\phi(z)$ lies above $H$, i.e. (\ref{case x,z above. y below}) holds.\\ \\
Thus let $z\notin \aff(x,y)$ be such that $\Phi(z)$ lies above $H$. Choose points $a,b \in \mathbb{E}^{c}$ such that 
\begin{equation} \label{y in affine hull}
y \in (a , x) \hbox{ and } y \in (b , z).
\end{equation}
Then $y=(a\vee x)\wedge (b\vee z)$ and since $\Phi$ is a homomorphism 
\begin{equation*}
\Phi(y)=(\Phi(a)\vee \Phi(x))\wedge (\Phi(b)\vee \Phi(z)).
\end{equation*} 
Herefrom we obtain by (\ref{Phi ist interval phi, phi dach}) and since $\psi(v)\in \Phi(v)$ that
\begin{equation} \label{eqn phi hat}
[\phi(y),\hat{\phi}(y)]=\Phi(y) \supseteq (\phi(x) \vee {\psi}(a)) \wedge (\phi(z) \vee {\psi}(b)).
\end{equation}
If we move $a$ and $b$ toward infinity---still fulfilling (\ref{y in affine hull})---we obtain from (\ref{eqn phi hat})---and since $\psi$ is by (\ref{I.6}) an affine bijection---that $\phi(y)$ lies on the same side of the hyperplane $H$ as $\phi(x)$ and $\phi(z)$, i.e. $\phi(y)$ lies above $H$. \emph{Contradiction.}\\ \\
Thus the existence of a hyperplane $H\parallel F$ such that (\ref{phi Dach in G}) holds has been proved and thus moreover $\phi:{\mathbb E}^{c}\to H$ is an affine bijection.
\item\label{I.8} One proves analogously to (\ref{I.7})---interchanging the words above and below and the functions $\phi$ and ${\hat{\phi}}$---that the function $\hat{\phi}$ implicitly defined by (\ref{Phi ist interval phi, phi dach}) is an affine bijection between ${\mathbb E}^{c}$ and some hyperplane $G\parallel F$. It is clear from the fact that $\Phi(x)$ is a proper line-segment for some $x\in {\mathbb E}^{c}$ and thus $\phi(x)\neq \hat{\phi}(x)$ that $H\neq G$. Further it is easily seen that $o$ is located on the same side of $H$ and $G$, and $o$ lies in the open half-space bounded by $G$ and not containing $F$ and thus especially $o\notin F\cup G\cup H$.
\item\label{I.9} By (\ref{I.8}), (\ref{phi Dach in G}), (\ref{Phi ist interval phi, phi dach}), 
the fact that $(\forall x \in \mathbb{E}^{c})\ \phi(x) \in H$ and Proposition \ref{ray hyperlane intersection existance gamma} we obtain that $\Phi(x) = [\phi(x),\gamma \phi(x) + (1-\gamma) o]$ for some $\gamma\in (0,1)$ and some affine function $\phi$, i.e. we obtain the first part of (\ref{segments-on-rays-result}). Hence $\bigcup_{x \in \mathbb{E}^{c}} \Phi(x)$ is convex and by application of Proposition \ref{Prop Bedingungen für Phi} and the fact that $\Phi(\emptyset)=\emptyset$ we obtain that $\Phi(C)=\bigcup_{x\in C} \Phi(x)\cup \Phi(\emptyset)=\bigcup_{x\in C} \Phi(x)$ i.e. we obtain the second part of (\ref{segments-on-rays-result}). Altogether we proved that (\ref{non-parallel-case}) implies case a) of the Lemma.
\end{enumerate}
\item Suppose now that (ii) holds i.e. there exist two distinct parallel lines in ${\cal G}$.
\begin{enumerate}[\hspace{-0.8em} (1)]
\item\label{II.1} We show that all lines in ${\cal G}$ are parallel. Let $x,y \in \mathbb{E}^{c}$ be distinct points such that the lines $g_{x}:=\aff(\Phi(x))$ and $g_{y}:=\aff(\Phi(y))$ are distinct and parallel. By Proposition \ref{more-than-one-line-segment-prop} there exists $z\in \mathbb{E}^{c}$ such that $\Phi(z)$ is a proper line-segment and
\begin{equation} \label{phi z nicht in aff phi x u phi y}
\Phi(z)\nsubseteq \aff(\Phi(x) \vee \Phi(y)).
\end{equation}
We let let $g_{z}:= \aff(\Phi(z))$ and proceed indirectly. Suppose that 
$g_{z} \nparallel g_{x}$ (and thus equivalently $g_{z} \nparallel g_{y}$).
Since by application of the Dimension Lemma \ref{Dimlemma} 
\begin{equation*} 
\dim (\Phi(x) \vee \Phi(z)) = \dim (\Phi(y) \vee \Phi(z)) \leq 2
\end{equation*}
we obtain 
\begin{equation*} 
g_{x} \cap g_{z} \neq \emptyset \hbox{ and } g_{y} \cap g_{z} \neq \emptyset
\end{equation*}
and hence $\Phi(z) \subseteq \aff(\Phi(x) \vee \Phi(y))$ contradicting (\ref{phi z nicht in aff phi x u phi y}).\\ \\
Thus $\Phi(z), \Phi(x)$ and $\Phi(y)$ are proper parallel distinct line-segments such that none of them is contained in the affine hull of the union of the other two. This implies that for any $w \in \mathbb{E}^{c}$ with $\Phi(w)$ a proper line-segment there exist two points $p,q \in \lbrace x,y,z \rbrace \subset \mathbb{E}^{c}$ such that $\Phi(p), \Phi(q)$ are parallel proper line-segments such that $\Phi(w) \nsubseteq \aff(\Phi(p) \vee \Phi(q))$. Repeating the argument from above we obtain that $\Phi(w)$ is parallel to the line-segments $\Phi(x), \Phi(y)$ and $\Phi(z)$. Hence all points that are mapped to proper line-segments are mapped to parallel line segments.
\item\label{II.2} That for any line $h$ parallel to $g:=g_{x}$ there does not exist more than one $v\in {\mathbb E}^{c}$ with $\Phi(v)\cap  g\neq \emptyset$ is established along the lines of (\ref{I.2}). One just has to replace the ray $r$ emanating from $o$ by a line $h$ parallel to $g$ and to use Proposition \ref{parallel-lines-two-dim-prop} instead of Proposition \ref{different-rays-two-dim-prop}.
\item\label{II.3} There is no need for an argument analogous to (\ref{I.3}) since parallel lines do not intersect in ${\mathbb E}^{d}$. Further the assertions (\ref{II.4}) to (\ref{II.6}) are porved analogous to (\ref{I.4}) to (\ref{I.9}) in the following way:
\item\label{II.4} From Lemma \ref{radon-affine-dependence-lemma-parallel} (\ref{second-case}) we obtain that\footnote{Note that in case of Lemma \ref{radon-affine-dependence-lemma-parallel} (\ref{first-case}) the sets $\Phi(x)$ consist for any $x\in {\mathbb E}^{c}$ just of one single point, contradicting the hypotheses of the Lemma to be proved.} for any $c+2$ element set $S\subset {\mathbb E}^{c}$ there exists a hyperplane $F\subset {\mathbb E}^{c+1}$ with $F\nparallel g$ that intersects $\Phi(x)$ for any $x\in S$ i.e. $(\forall x\in S)\ \ \Phi(x)\cap F\neq \emptyset$.
\item\label{II.5} By application of the Transversality Theorem \ref{parallel-transversality-theo} of Appendix \ref{transversality-theorems} we obtain from (\ref{II.4}) and (\ref{II.1}) that there exists a hyperplane $F\subset {\mathbb E}^{c+1}$ with $F\nparallel g$ that intersects $\Phi(x)$ for any $x\in {\mathbb E}^{d}$ in a single point $p_{x}$. Since by (\ref{intersection-equals-empty-set}) $x\neq y$ implies $\Phi(x)\cap \Phi(y)=\emptyset$ and thus $p_{x}\neq p_{y}$ the function $\psi:{\mathbb E}^{c}\to F$ given by $\psi(x):=p_{x}$ is injective.
\item\label{II.6} By Proposition \ref{preservation-of-point-order-prop-prallel} the function $\psi:{\mathbb E}^{c}\to F$ defined in (\ref{I.5}) preserves the order of points and we obtain from Corollary \ref{corollary-aff-geometry} that $\psi$ is an affine bijection between ${\mathbb E}^{c}$ and $F$.
\item\label{II.7} Let $w$ be a non-zero vector parallel to $g$ and let $\phi:{\mathbb E}^{c}\to {\mathbb E}^{c+1}$ and $\hat{\phi}: \mathbb{E}^{c} \to \mathbb{E}^{c+1}$ be such that (\ref{Phi ist interval phi, phi dach}) is fulfilled and the vector $\hat{\phi}(x)-\phi(x)$ points in the same direction as $w$ (provided that $\hat{\phi}(x)\neq \phi(x)$). Then we obtain in a way similar to (\ref{I.7})---using Proposition \ref{affine-bijection-prop-parallel} instead of Proposition \ref{affine-bijection-prop}---a hyperplanes $H\parallel F$ such that $\phi$ maps ${\mathbb E}^{c}$ to $H$.
\item\label{II.8} Analogous to (\ref{I.8}) we obtain a hyperpalen $G\neq H$ such that $G\parallel H$ and $\hat{\phi}$ maps ${\mathbb E}^{c}$ to $G$. 
\item\label{II.9} Finally we obtain---replacing Proposition \ref{ray hyperlane intersection existance gamma} by Proposition \ref{Prop phi+ = phi- + v} in an argument analogous to the one provided in (\ref{I.9})---that (\ref{case a}) implies case b) of the Lemma.
\end{enumerate}
\end{enumerate}
Thus the Lemma is proved.
\end{proof}

\section{Applications}\label{section-applications}

\begin{remark}
We investigate in this section anti-homomorphisms and homomorphisms between lattices of convex sets and spaces of convex functions. To this end we first introduce the space of convex lower semi-continuous\footnote{Note that a function $f:{\mathbb E}^{d}\to (-\infty,\infty]$ is lower semi-continuous iff its epigraph\linebreak $\epi(f):=\{ (x,r) \mid f(x)\leq r \}$ is closed in ${\mathbb E}^{d}\times (-\infty,\infty]$.} functions $Cvx(\mathbb{E}^{c})$ as well as its subspaces $Cvx_{(0,0)}(\mathbb{E}^{c})$, $Cvx_{[0,\infty]}(\mathbb{E}^{c})$, $Cvx_{(0,0)}^{\kappa}({\mathbb E}^{c})$ and $Cvx_{[0,\kappa]}(\mathbb{E}^{c})$.
\end{remark}

\begin{definition}
Let $f\in Cvx(\mathbb{E}^{c})$ if either $f: \mathbb{E}^{c} \to (-\infty,\infty]$ is convex and lower semi-continuous or $f \equiv -\infty$ i.e. let 
\begin{equation*}
Cvx(\mathbb{E}^{c}) := \{ f:{\mathbb E}^{c}\to (-\infty,\infty] \mid f \hbox{ is lower semi-continuous and convex} \}\cup \{ -\infty \}.
\end{equation*}
Let further 
\begin{equation*}
Cvx_{(0,0)}(\mathbb{E}^{c}) := \{ f\in Cvx(\mathbb{E}^{c}) \mid f(0)=0 \} \cup \{ -\infty \}
\end{equation*} 
and let
\begin{equation*}
Cvx_{[0,\infty]}(\mathbb{E}^{c}) := \{ f\in Cvx(\mathbb{E}^{c}) \mid f \geq 0\ \hbox{ and }\ (\exists x \in \mathbb{E}^{c})\ f(x)=0 \} \cup \{ +\infty \}.
\end{equation*}
\end{definition}

\begin{definition}\label{lattice-op-of-convex-functions-def}
Given two functions $f,g\in Cvx(\mathbb{E}^{c})$ we let
\begin{equation*}
\begin{split}
f\sqcap g:=\min \{ h\in Cvx(\mathbb{E}^{c}) \mid f, g \leq h \}\\[0.1cm]
f\sqcup g:=\max \{ h\in Cvx(\mathbb{E}^{c}) \mid f, g \geq h \}
\end{split}
\end{equation*}
with $\min$ and $\max$ denoting the minimum and maximum in $Cvx(\mathbb{E}^{c})$ with respect to point-wise order.\\ \\
Let further for functions $f,g\in Cvx_{(0,0)}(\mathbb{E}^{c})$
\begin{equation*}
\begin{split}
f \sqcap_{-} g:=\min \{ h\in Cvx_{(0,0)}(\mathbb{E}^{c}) \mid f, g \leq h \}\\[0.1cm]
f\sqcup_{-} g:=\max \{ h\in Cvx_{(0,0)}(\mathbb{E}^{c}) \mid f, g \geq h \}
\end{split}
\end{equation*}
with $\min$ and $\max$ denoting the minimum and maximum in $Cvx_{(0,0)}(\mathbb{E}^{c})$ with respect to point-wise order and let finally for $f,g\in Cvx_{[0,\infty]}(\mathbb{E}^{c})$
\begin{equation*}
\begin{split}
f\sqcap_{+} g:=\min \{ h\in Cvx_{[0,\infty]}(\mathbb{E}^{c}) \mid f, g \leq h \}\\[0.1cm]
f\sqcup_{+} g:=\max \{ h\in Cvx_{[0,\infty]}(\mathbb{E}^{c}) \mid f, g \geq h \}
\end{split}
\end{equation*}
with $\min$ and $\max$ denoting the minimum and maximum in $Cvx_{[0,\infty]}(\mathbb{E}^{c})$ with respect to point-wise order.
\end{definition}

\begin{remark}
Note that $f\sqcap g$, $f\sqcup g$, $f\sqcap_{-} g$, $f\sqcup_{-} g$, $f\sqcap_{+} g$ and $f\sqcup_{+} g$ are well defined i.e. $\max$ and $\min$ in Definition \ref{lattice-op-of-convex-functions-def} exist.\footnote{This is most easily seen by considering the epigraphs of the functions under consideration, since for example we may define $f\sqcup g$ as the uniquely determined function $h$ such that $\epi(h)$ equals the closed convex hull of $\epi(f)\cup \epi(g)$.}
\end{remark}

\begin{remark}\label{sqcups-relations}
Note further that $f,g\in Cvx_{(0,0)}(\mathbb{E}^{c})$ implies $f\sqcap_{-} g = f\sqcap g$, while
\begin{equation*}
f\sqcup_{-} g = f \sqcup g\ \hbox{ iff }\   [f\sqcup g](0)=0\ \hbox{ and }\ f\sqcup_{-} g = -\infty\ \hbox{ otherwise.}
\end{equation*}
Further $f,g\in Cvx_{[0,\infty]}(\mathbb{E}^{c})$ implies $f\sqcup_{+} g = f\sqcup g$, while 
\begin{equation*}
f\sqcap_{+} g = f\sqcap g\ \hbox{ iff }\ \min_{x} [f\sqcap g](x) = 0\ \hbox{ and }\ f\sqcap_{+} g = +\infty\ \hbox{ otherwise.}
\end{equation*}
\end{remark}

\begin{notation}
Given $C\subseteq {\mathbb E}^{c}$ we denote by $1_{C}^{\infty}$ the function $1_{C}^{\infty}:{\mathbb E}^{c}\to \{ 0 , \infty \}$ that is given by $1_{C}^{\infty}(x):=0$ for $x\in C$ and $1_{C}^{\infty}(x):=\infty$ for $x\notin C$.
\end{notation}

\begin{notation}
Given a compact, convex set $C\subseteq {\mathbb E}^{c}$ we let $h_{C}(y):=\sup_{x\in C} \langle x| y \rangle$ and call $h_{C}$ the support function of the convex set $C$. We further let
\begin{equation*}
{\cal S}({\mathbb E}^{c}):=\{ h_{C} \mid C\subseteq {\mathbb E}^{c}\ \hbox{convex and compact} \} 
\end{equation*}
i.e. ${\cal S}({\mathbb E}^{c})$ denotes the space of all support functions $h_{C}:{\mathbb E}^{c}\to {\mathbb R}$ of compact convex sets. 
\end{notation}

\begin{remark}
Note that $h_{\emptyset} = -\infty$.
\end{remark}

\begin{definition}
Define the Legendre-Fenchel transform as the function\newline \mbox{${\cal L}:Cvx({\mathbb E}^{c}) \to Cvx({\mathbb E}^{c})$} given by
\begin{equation*}
[{\cal L}f](y) := \sup_{x \in \mathbb{E}^{c}} (\langle x | y \rangle - f(x)).
\end{equation*}
\end{definition}

\begin{remark}\label{properties-of-fenchel-trafo}
It is a well known fact that ${\cal L}:Cvx({\mathbb E}^{c}) \to Cvx({\mathbb E}^{c})$ is an involution i.e. ${\cal L}\circ {\cal L} = id$ and thus a bijection. It is further a lattice-anti-endomorphism on $((Cvx({\mathbb E}^{c}), \sqcap,\sqcup )$ and thus order reversing i.e. we have that $f\leq g \Rightarrow {\cal L}f\geq {\cal L}g$ for $f,g\in Cvx({\mathbb E}^{c})$. Further given $g\in Cvx({\mathbb E}^{c})$, $C\in {\mathscr C}({\mathbb E}^{c})$ and $\kappa\in {\mathbb R}$ the Legendre-Fenchel transform fulfils
\begin{equation*}
h_{C}={\cal L}[1^{\infty}_{C}]\ \hbox{ and }\ {\cal L}[g+\kappa]={\cal L}[g] -\kappa.
\end{equation*}
For a derivation of these and further properties of the Legendre-Fenchel transform consult \cite[Section 11]{RW09}
\end{remark}

\begin{lemma}\label{legendre-trafo-is-bijection-lemma}
The Legendre-Fenchel transform ${\cal L}$ is a bijective lattice anti-homomorphism between $(Cvx_{(0,0)}({\mathbb E}^{c}),\sqcap_{-},\sqcup_{-})$ and $(Cvx_{[0,\infty]}({\mathbb E}^{c}),\sqcap_{+},\sqcup_{+})$.
\end{lemma}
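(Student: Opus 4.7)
The plan is to derive the assertion from the classical properties of the Legendre--Fenchel transform recalled in Remark~\ref{properties-of-fenchel-trafo}: $\mathcal{L}$ is an involution on $Cvx(\mathbb{E}^c)$ (hence a bijection) and reverses the pointwise order. From these two ingredients it suffices to verify that $\mathcal{L}$ restricts to a bijection between the sub-posets $Cvx_{(0,0)}(\mathbb{E}^c)$ and $Cvx_{[0,\infty]}(\mathbb{E}^c)$, and then to invoke the general observation that an order-reversing bijection between two bounded lattices automatically maps meets to joins and joins to meets.

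First I would verify $\mathcal{L}\bigl(Cvx_{(0,0)}(\mathbb{E}^c)\bigr) \subseteq Cvx_{[0,\infty]}(\mathbb{E}^c)$. The exceptional element is handled by $\mathcal{L}(-\infty) = +\infty$. For $f \in Cvx_{(0,0)}(\mathbb{E}^c)$ with $f \not\equiv -\infty$, the identity $f(0) = 0$ gives
\begin{equation*}
[\mathcal{L}f](y) = \sup_{x \in \mathbb{E}^c}\bigl( \langle x | y\rangle - f(x)\bigr) \geq \langle 0 | y\rangle - f(0) = 0
\end{equation*}
for every $y \in \mathbb{E}^c$, so $\mathcal{L}f \geq 0$; a point $y_0$ with $[\mathcal{L}f](y_0) = 0$ is then exhibited as a subgradient of $f$ at $0$. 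Conversely, for $g \in Cvx_{[0,\infty]}(\mathbb{E}^c)$ with $g \not\equiv +\infty$, choosing $x_0$ with $g(x_0)=0$ and using $g \geq 0$ yields $[\mathcal{L}g](0) = -\inf g = 0$, so $\mathcal{L}g \in Cvx_{(0,0)}(\mathbb{E}^c)$. Involutivity of $\mathcal{L}$ upgrades these two inclusions to a bijection.

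Second, I would exploit that $\mathcal{L}$ is order-reversing. By definition $f \sqcap_{-} g$ is the least upper bound and $f \sqcup_{-} g$ the greatest lower bound of $\{f, g\}$ in $Cvx_{(0,0)}(\mathbb{E}^c)$ with respect to the pointwise order, and the analogous description holds for $\sqcap_{+}, \sqcup_{+}$ in $Cvx_{[0,\infty]}(\mathbb{E}^c)$. An order-reversing bijection between two such sub-lattices sends the least upper bound of a pair to the greatest lower bound of its image, and vice versa; applied to $\{f, g\}$ this immediately gives
\begin{equation*}
\mathcal{L}(f \sqcap_{-} g) = \mathcal{L}f \sqcup_{+} \mathcal{L}g \quad \text{and} \quad \mathcal{L}(f \sqcup_{-} g) = \mathcal{L}f \sqcap_{+} \mathcal{L}g,
\end{equation*}
which is precisely the required anti-homomorphism property.

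The main obstacle will be the existential clause in the definition of $Cvx_{[0,\infty]}$: for an arbitrary $f \in Cvx_{(0,0)}$ one has $\inf \mathcal{L}f = -f(0) = 0$, but a priori this infimum need not be attained (it fails exactly when $f$ admits no subgradient at $0$). I expect this point to be dispatched either by the standard fact that a proper convex lsc function admits a subgradient at every relative interior point of its effective domain---combined with a short lsc-approximation argument for boundary cases---or by invoking the conventions for $\sqcap_{-}$ and $\sqcap_{+}$ recorded in Remark~\ref{sqcups-relations} to absorb the potentially pathological cases into the exceptional elements $\pm \infty$.
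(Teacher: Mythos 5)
Your overall architecture is sound, and your second step is actually cleaner than the paper's: the paper routes the anti-homomorphism property through the case distinctions of Remark \ref{sqcups-relations}, whereas you observe directly that $\mathcal{L}$, being an involution and order-reversing, restricts to an order anti-isomorphism of the two sub-posets and therefore exchanges least upper bounds ($\sqcap_{-}$, $\sqcap_{+}$) with greatest lower bounds ($\sqcup_{-}$, $\sqcup_{+}$). Your computation $[\mathcal{L}g](0)=-\inf g=0$ for $g\in Cvx_{[0,\infty]}\setminus\{+\infty\}$ and the upgrade of the two inclusions to a bijection via involutivity also match the paper's item (ii).

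The gap is exactly where you flag it, and neither of your two proposed repairs closes it. The value $[\mathcal{L}f](y_{0})=0$ is attained precisely when $y_{0}$ is a subgradient of $f$ at $0$, i.e. when $\langle x|y_{0}\rangle\leq f(x)$ for all $x$, and this can fail when $0$ lies on the boundary of the effective domain of $f$. Concretely, for $c=1$ (or after extending by $+\infty$ to higher $c$) let $f(x)=-\sqrt{x}$ for $x\geq 0$ and $f(x)=+\infty$ for $x<0$. This $f$ is convex and lower semi-continuous with $f(0)=0$, so $f\in Cvx_{(0,0)}\setminus\{-\infty\}$, yet $[\mathcal{L}f](y)=-1/(4y)$ for $y<0$ and $+\infty$ for $y\geq 0$, so $\inf\mathcal{L}f=0$ is not attained and $\mathcal{L}f\notin Cvx_{[0,\infty]}$. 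The relative-interior subdifferentiability fact you cite does not apply at such boundary points, no lsc-approximation can rescue a conclusion that is false for this specific $f$, and Remark \ref{sqcups-relations} provides no exceptional element into which $f$ could be absorbed. You should be aware that the paper's own proof of its item (i) has the identical defect: it asserts that ``by convexity of $f$ and the theorem of Hahn-Banach'' a linear minorant $\tilde{y}$ touching $f$ at $0$ exists, but Hahn-Banach only yields a supporting hyperplane of $\epi(f)$ at $(0,0)$, which may be vertical, as in the example above. Both proofs go through verbatim under the additional hypothesis that $0$ is an interior point of the effective domain of $f$ (equivalently, if the clause $(\exists x)\ f(x)=0$ in the definition of $Cvx_{[0,\infty]}$ is weakened to $\inf f=0$); in the paper's actual application via $Cvx_{(0,0)}^{\kappa}$ the functions are finite-valued, hence subdifferentiable at $0$, so the issue is invisible there.
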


\begin{proof}
Since we know from Remark \ref{properties-of-fenchel-trafo} that ${\cal L}:Cvx({\mathbb E}^{c})\to Cvx({\mathbb E}^{c})$ is a bijective lattice anti-endomorphism with respect to $\sqcap$ and $\sqcup$, it suffices by Remark \ref{sqcups-relations} to show that ${\cal L}$ is a bijection from $Cvx_{(0,0)}$ onto $Cvx_{[0,\infty]}$ that maps $\{ + \infty \}$ to $\{ -\infty \}$ and vice versa. Since ${\cal L}$ is an involution and ${\cal L}(+\infty)=-\infty$ it thus further suffices to show that
\begin{enumerate}[(i)]
\item\label{(i)} ${\cal L}(Cvx_{(0,0)}\setminus \{-\infty\})\subseteq Cvx_{[0,\infty]}\setminus \{+\infty\}$ and
\item\label{(ii)} ${\cal L}(Cvx_{[0,\infty]}\setminus \{+\infty\})\subseteq Cvx_{(0,0)}\setminus \{-\infty\}$.
\end{enumerate}
To prove (\ref{(i)}) let $f\in Cvx_{(0,0)}\setminus \{-\infty \}$. Since $f(0)=0$ there exists by convexity of $f$ and the theorem of Hahn-Banach some $\tilde{y}\in {\mathbb E}^{c}$ such that 
\begin{equation*}
(\forall x\in {\mathbb E}^{c})\ (\langle x , \tilde{y} \rangle \leq f(x))
\end{equation*}
and thus
\begin{equation}\label{Lf-attains-0}
[{\cal L}f](\tilde{y}) = \sup_{x} (\langle x | \tilde{y} \rangle - f(x)) = \langle 0 | \tilde{y} \rangle - f(0) = 0. 
\end{equation}
Further for any $y\in {\mathbb E}^{c}$ we have
\begin{equation}\label{Lf>=0}
[{\cal L}f](y) = \sup_{x} (\langle x | y \rangle - f(x)) \geq \langle 0 | y \rangle - f(0) = 0.
\end{equation}
Since we know that ${\cal L}f\in Cvx$ the conjunction of (\ref{Lf-attains-0}) and (\ref{Lf>=0}) just says that ${\cal L}f\in Cvx_{[0,\infty]}\setminus \{+\infty \}$ and (\ref{(i)}) has been shown.\\ \\
To prove (\ref{(ii)}) let $f\in Cvx_{[0,\infty]}\setminus \{ +\infty \}$ and let $x_{0}\in {\mathbb E}^{c}$ be such that $f(x_{0})=0$. Then
\begin{equation*}
[{\cal L}f](0) = \sup_{x} (\langle x | 0\rangle - f(x)) = 0 - f(x_{0})= 0 - 0 = 0
\end{equation*}
i.e. ${\cal L}f\in Cvx_{(0,0)}\setminus \{ -\infty \}$ and (\ref{(ii)}) has been shown.
\end{proof}

\begin{definition}
For $\kappa\in (0,\infty)$ we define
\begin{equation*}
\begin{split}
Cvx_{[0,\kappa]}({\mathbb E}^{c}):=\{ & f:{\mathbb E}^{c}\to [0,\kappa]\cup \{ + \infty \} \mid \\
& f\in Cvx_{[0,\infty]}({\mathbb E}^{c})\ \hbox{and}\ f^{-1}([0,\kappa])\ \hbox{is compact}\}.
\end{split}
\end{equation*}
\end{definition}

\begin{definition}
For $\kappa\in (0,\infty)$ we define
\begin{equation*}
Cvx_{(0,0)}^{\kappa}({\mathbb E}^{c}) := \{ g\in Cvx_{(0,0)}({\mathbb E}^{c}) \mid \exists h\in {\cal S}({\mathbb E}^{c})\ \hbox{such that}\ h-\kappa\leq g\leq h \}.
\end{equation*}
\end{definition}
\begin{lemma}
The Legendre transform ${\cal L}$ is for $\kappa\in (0,\infty)$ a bijective lattice anti-homomorphism between $( Cvx_{(0,0)}^{\kappa}({\mathbb E}^{c}),\sqcap_{-},\sqcup_{-} )$ and $( Cvx_{[0,\kappa]}({\mathbb E}^{c}),\sqcap_{+},\sqcup_{+} )$.
\end{lemma}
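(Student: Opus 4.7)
I plan to bootstrap from Lemma \ref{legendre-trafo-is-bijection-lemma}, which already provides the bijective lattice anti-homomorphism $\mathcal{L}: Cvx_{(0,0)}(\mathbb{E}^c) \to Cvx_{[0,\infty]}(\mathbb{E}^c)$. The proof thus reduces to two tasks: (a) showing that $\mathcal{L}$ restricts to a bijection $Cvx_{(0,0)}^\kappa(\mathbb{E}^c) \to Cvx_{[0,\kappa]}(\mathbb{E}^c)$, and (b) verifying that both subspaces are closed under the restrictions of $\sqcap_\pm, \sqcup_\pm$. Once (a) and (b) are established, the anti-homomorphism property for the restriction follows automatically from its validity on the ambient spaces.

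For (a), the key identity---combining $\mathcal{L}[1_C^\infty] = h_C$, $\mathcal{L}[h_C] = 1_C^\infty$ (by involutivity), $\mathcal{L}[g+\kappa] = \mathcal{L}g - \kappa$ (all from Remark \ref{properties-of-fenchel-trafo}), and the order-reversal of $\mathcal{L}$---is
\[
h_C - \kappa \le g \le h_C \iff 1_C^\infty \le \mathcal{L}g \le 1_C^\infty + \kappa.
\]
As $C$ ranges over compact convex subsets of $\mathbb{E}^c$ (with $C = \emptyset$ giving $h_C = -\infty$ and covering the corner case $g \equiv -\infty$), the left inequality is precisely the defining condition for $g \in Cvx_{(0,0)}^\kappa$. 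The right inequality characterises $\mathcal{L}g \in Cvx_{[0,\kappa]}$: for non-trivial $f \in Cvx_{[0,\infty]}$ the canonical $C$ is $f^{-1}([0,\kappa])$, which is compact and convex as a sublevel set of the nonnegative convex lower semi-continuous function $f$; on this $C$ one has $0 \le f \le \kappa$, while $f = +\infty$ off $C$, so $1_C^\infty \le f \le 1_C^\infty + \kappa$; conversely such inequalities force $f$ to take values in $[0,\kappa] \cup \{+\infty\}$ with $f^{-1}([0,\kappa]) \subseteq C$ compact. Involutivity of $\mathcal{L}$ then yields the restricted bijection.

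For (b) on the $Cvx_{[0,\kappa]}$ side, closure under $\sqcap_+$ is immediate: either $f_1 \sqcap_+ f_2 = +\infty$ or it equals $\max(f_1, f_2)$, whose $\kappa$-sublevel set is the intersection of the two compact sublevel sets of $f_1, f_2$. Closure under $\sqcup_+$ is the only delicate point; I plan an epigraph argument. Setting $D_i := f_i^{-1}([0,\kappa])$ and $D := D_1 \vee D_2$, the inclusion $D_i \times [\kappa, \infty) \subseteq \epi(f_i)$ (since $f_i \le \kappa$ on $D_i$) yields $D \times [\kappa, \infty) \subseteq \overline{\conv(\epi(f_1) \cup \epi(f_2))} = \epi(f_1 \sqcup_+ f_2)$, so $f_1 \sqcup_+ f_2 \le \kappa$ on $D$; continuity of the projection to $\mathbb{E}^c$ then gives $\{x : (f_1 \sqcup_+ f_2)(x) < +\infty\} \subseteq D$, so $f_1 \sqcup_+ f_2$ takes values in $[0,\kappa] \cup \{+\infty\}$ with compact sublevel set $D$. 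Closure of $Cvx_{(0,0)}^\kappa$ is handled dually via the identities $h_{C_1} \sqcap h_{C_2} = h_{C_1 \vee C_2}$ and $h_{C_1} \sqcup h_{C_2} = h_{C_1 \cap C_2}$ (the latter obtained by applying $\mathcal{L}$ to $1_{C_1}^\infty \sqcap 1_{C_2}^\infty = 1_{C_1 \cap C_2}^\infty$): given $h_{C_i} - \kappa \le g_i \le h_{C_i}$, the element $g_1 \sqcap_- g_2$ is sandwiched between $h_{C_1 \vee C_2} - \kappa$ and $h_{C_1 \vee C_2}$, and $g_1 \sqcup_- g_2$ between $h_{C_1 \cap C_2} - \kappa$ and $h_{C_1 \cap C_2}$ (or equals $-\infty$, which lies in $Cvx_{(0,0)}^\kappa$ via $h_\emptyset$).

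The main obstacle will be the epigraph argument for $\sqcup_+$: outside $D_i$ the function $f_i$ is $+\infty$ and imposes no individual constraint, so ruling out finite values exceeding $\kappa$ for $f_1 \sqcup_+ f_2$ really requires using the convex hull of both epigraphs simultaneously to propagate the bound $\kappa$ throughout the compact set $D$. Everything else is routine manipulation within the Legendre--Fenchel dictionary already assembled in the section.
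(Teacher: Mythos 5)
Your proof is correct and its core---establishing $\mathcal{L}(Cvx_{[0,\kappa]})\subseteq Cvx_{(0,0)}^{\kappa}$ and $\mathcal{L}(Cvx_{(0,0)}^{\kappa})\subseteq Cvx_{[0,\kappa]}$ via the sandwich $1^{\infty}_{C}\leq f\leq 1^{\infty}_{C}+\kappa$ with $C=f^{-1}([0,\kappa])$, the Legendre--Fenchel identities of Remark \ref{properties-of-fenchel-trafo}, and then involutivity together with Lemma \ref{legendre-trafo-is-bijection-lemma}---is exactly the paper's argument. Your step (b), verifying that $Cvx_{[0,\kappa]}$ and $Cvx_{(0,0)}^{\kappa}$ are closed under the inherited lattice operations (the epigraph argument for $\sqcup_{+}$ and the support-function sandwiches for $\sqcap_{-}$ and $\sqcup_{-}$), is a sound and worthwhile addition that the paper leaves implicit.
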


\begin{proof}
From Lemma \ref{legendre-trafo-is-bijection-lemma} we already know that ${\cal L}$ is a bijective lattice anti-homomorphism between $(Cvx_{(0,0)}({\mathbb E}^{c}),\sqcap_{-},\sqcup_{-})$ and $(Cvx_{[0,\infty]}({\mathbb E}^{c}),\sqcap_{+},\sqcup_{+})$.\\ \\
Thus---since ${\cal L}$ is 
an involution---it suffices to show that
\begin{enumerate}[(i)]
\item\label{(i)*} ${\cal L}(Cvx_{[0,\kappa]})\subseteq Cvx_{(0,0)}^{\kappa}$ and
\item\label{(ii)*} ${\cal L}(Cvx_{(0,0)}^{\kappa})\subseteq Cvx_{[0,\kappa]}$.
\end{enumerate}
To prove (\ref{(i)*}) it thus suffices to show that $f\in Cvx_{[0,\kappa]}$ implies that there exists some convex body $C$ such that
\begin{equation}\label{squeezed-between-support-functions}
h_{C}-\kappa \leq {\cal L}[f] \leq h_{C}.
\end{equation}
Given $f\in Cvx_{[0,\kappa]}$ we let $C:=f^{-1}([0,\kappa])$ and obtain 
\begin{equation}\label{bounding-f-equation}
1^{\infty}_{C}\leq f\leq 1^{\infty}_{C}+\kappa.
\end{equation}
By application of Remark \ref{properties-of-fenchel-trafo} we obtain from (\ref{bounding-f-equation}) that
\begin{equation*}
h_{C}-\kappa = {\cal L}[1^{\infty}_{C}]-\kappa = {\cal L}[1^{\infty}_{C}+\kappa] \leq {\cal L}[f] \leq {\cal L}[1^{\infty}_{C}]=h_{C}
\end{equation*}
i.e. (\ref{squeezed-between-support-functions}) and thus (\ref{(i)*}) has been proved.\\ \\
To prove (\ref{(ii)*}) it suffices by Lemma \ref{legendre-trafo-is-bijection-lemma} to show for $g\in Cvx_{(0,0)}^{\kappa}$ that 
\begin{equation*}
h_C - \kappa \leq g \leq h_C\ \hbox{ implies }\ 1^{\infty}_{C} \leq \mathcal{L}(g) \leq 1^{\infty}_{C} + \kappa.
\end{equation*}
This is again done by application of Remark \ref{properties-of-fenchel-trafo}.
\end{proof}

\begin{theorem} \label{Homomorphismen Satz Anwendungen}
Let $\kappa\in (0,\infty)$, let $c\geq 3$ and let 
\begin{equation*}
{\mathcal H}:({\mathscr C}({\mathbb E}^{c}),\wedge,\vee) \to (Cvx_{[0,\kappa]}(\mathbb{E}^{c}),\sqcap_{+},\sqcup_{+})
\end{equation*}
be a non-trivial lattice homomorphism. Then ${\mathcal H}(C)={1}^{\infty}_{\phi(C)}$ for some affine bijection $\phi:{\mathbb E}^{c}\to {\mathbb E}^{c}$.
\end{theorem}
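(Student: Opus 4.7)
The plan is to reduce Theorem \ref{Homomorphismen Satz Anwendungen} to the main characterisation Theorem \ref{Theorem Homomorphisms} by realising $\mathcal{H}$ as a lattice homomorphism $\Phi:\mathscr{C}(\mathbb{E}^c)\to\mathscr{C}(\mathbb{E}^{c+1})$ via a truncated-epigraph embedding of $Cvx_{[0,\kappa]}(\mathbb{E}^c)$ into $\mathscr{C}(\mathbb{E}^{c+1})$.

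First I would record the elementary fact that $\mathcal{H}(\emptyset)=+\infty$. Since $\emptyset$ is the lattice-minimum of $(\mathscr{C}(\mathbb{E}^c),\wedge,\vee)$ and the lattice order on $(Cvx_{[0,\kappa]},\sqcap_+,\sqcup_+)$ is the reverse of the pointwise order on functions (because $f\sqcap_+ g$ is the pointwise supremum in the lattice), $\mathcal{H}(\emptyset)$ must be the pointwise-largest element of $Cvx_{[0,\kappa]}$, namely $+\infty$. An argument paralleling that of Proposition \ref{injectivity-on-points-prop} then yields $\mathcal{H}(\{x\})\neq+\infty$ for every $x\in\mathbb{E}^c$, so each $\mathcal{H}(\{x\})$ has a non-empty zero set inside $\mathcal{H}(\{x\})^{-1}([0,\kappa])$.

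Next, identifying $\mathbb{E}^{c+1}$ with $\mathbb{E}^c\times\mathbb{R}$, I define $\Phi:\mathscr{C}(\mathbb{E}^c)\to\mathscr{C}(\mathbb{E}^{c+1})$ by
\[
\Phi(C)\;:=\;\epi(\mathcal{H}(C))\cap(\mathbb{E}^c\times[0,\kappa])\;=\;\{(x,t)\in\mathbb{E}^{c+1}:0\le t\le\kappa,\ \mathcal{H}(C)(x)\le t\}.
\]
Compactness of $\mathcal{H}(C)^{-1}([0,\kappa])$ makes each $\Phi(C)$ a convex body in $\mathbb{E}^{c+1}$, and $\Phi(\emptyset)=\emptyset$. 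The join condition $\Phi(C\vee D)=\Phi(C)\vee\Phi(D)$ follows from the identity $\epi(f\sqcup_+ g)=\overline{\conv}(\epi(f)\cup\epi(g))$ together with the fact that $\Phi(C)$ and $\Phi(D)$ already lie in the strip $\mathbb{E}^c\times[0,\kappa]$. The meet condition $\Phi(C\wedge D)=\Phi(C)\cap\Phi(D)$ is immediate when $\mathcal{H}(C)\sqcap_+\mathcal{H}(D)$ equals the pointwise maximum, but is delicate in the remaining case $\mathcal{H}(C)\sqcap_+\mathcal{H}(D)=+\infty$: one must show that then $\Phi(C)\cap\Phi(D)=\emptyset$, equivalently that $\mathcal{H}(C)^{-1}([0,\kappa])\cap\mathcal{H}(D)^{-1}([0,\kappa])=\emptyset$ whenever $C\cap D=\emptyset$. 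This is the main technical obstacle; I would address it by a Radon/second-countability argument in the spirit of Proposition \ref{Y-at-most-countable-prop} and Lemma \ref{Dimlemma}, using the hypothesis $c\ge3$ to produce the uncountable family of pairwise-disjoint open sets required to force a contradiction.

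With $\Phi$ established as a non-trivial lattice homomorphism, Theorem \ref{Theorem Homomorphisms} applies. Since $\Phi(\{x\})$ contains the two distinct points $(\mathcal{H}(\{x\})^{-1}(0),0)$ and $(\mathcal{H}(\{x\})^{-1}(0),\kappa)$ it is not a singleton, ruling out case (\ref{case i}); $\Phi(\emptyset)=\emptyset$ rules out case (\ref{case ii}); the verticality of the segment spanned by those two points forces the parallel case (\ref{case iii}) with vector $v=(0,\dots,0,\kappa)$ and hyperplane $H=\mathbb{E}^c\times\{0\}$, rather than the central-projection case (\ref{case iv}) in which the segments would converge toward a common point. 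The affine bijection $\phi':\mathbb{E}^c\to H$ from case (\ref{case iii}) projects to an affine bijection $\phi:\mathbb{E}^c\to\mathbb{E}^c$ via $\phi'(x)=(\phi(x),0)$; hence $\Phi(\{x\})=\{\phi(x)\}\times[0,\kappa]$, which translates back to $\mathcal{H}(\{x\})=1^\infty_{\{\phi(x)\}}$. The formula $\Phi(C)=\bigcup_{x\in C}\Phi(x)=\phi(C)\times[0,\kappa]$ from case (\ref{case iii}) finally yields $\mathcal{H}(C)=1^\infty_{\phi(C)}$ for every $C\in\mathscr{C}(\mathbb{E}^c)$, completing the proof.
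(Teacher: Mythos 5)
Your route is the paper's own: embed $(Cvx_{[0,\kappa]}(\mathbb{E}^{c}),\sqcap_{+},\sqcup_{+})$ into $\mathscr{C}(\mathbb{E}^{c+1})$ via the truncated epigraph $\mathcal{T}(f)=\epi(f)\cap(\mathbb{E}^{c}\times[0,\kappa])$, apply Theorem \ref{Theorem Homomorphisms} to $\Phi=\mathcal{T}\circ\mathcal{H}$, and use the vertical segments $\{z\}\times[0,\kappa]$ over a zero of $\mathcal{H}(x)$ to exclude cases (\ref{case i}), (\ref{case ii}) and (\ref{case iv}); your concluding case analysis coincides with the paper's and is fine. You are in fact more careful than the paper at the one delicate point: the paper asserts that $\mathcal{T}$ is ``easily seen'' to be a lattice homomorphism, but it is not one on all of $Cvx_{[0,\kappa]}$ --- for two truncated tent functions $f,g$ whose zero sets are disjoint while $f^{-1}([0,\kappa])\cap g^{-1}([0,\kappa])\neq\emptyset$ one has $f\sqcap_{+}g=+\infty$, hence $\mathcal{T}(f\sqcap_{+}g)=\emptyset\neq\mathcal{T}(f)\cap\mathcal{T}(g)$. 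So the meet identity for $\Phi$ genuinely has to be argued on the image of $\mathcal{H}$, exactly as you say.

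The problem is that your proposal does not actually close this gap: the appeal to ``a Radon/second-countability argument in the spirit of Proposition \ref{Y-at-most-countable-prop} and Lemma \ref{Dimlemma}'' is a placeholder, and it is not clear how to run it directly, since the sets $\Phi(C)\cap\Phi(D)$ you need to kill are only known to avoid the floor $\mathbb{E}^{c}\times\{0\}$ at a height depending on the pair, so no fixed uncountable disjoint family of open sets presents itself. (Also, your justification of $\mathcal{H}(\emptyset)=+\infty$ --- that a homomorphism must send the bottom element to the bottom element --- is not valid lattice theory; it only gives that $\mathcal{H}(\emptyset)$ dominates the image of $\mathcal{H}$ pointwise.) A repair along the paper's lines does exist: if $\mathcal{H}(\emptyset)\neq+\infty$ then $\mathcal{H}(C)\sqcap_{+}\mathcal{H}(D)\leq\mathcal{H}(\emptyset)$ always attains $0$, so $\sqcap_{+}$ is the pointwise maximum, the meet identity is automatic, and $\Phi(\emptyset)$ contains a segment of length $\kappa$, contradicting Lemma \ref{Dimlemma}; hence $\mathcal{H}(\emptyset)=+\infty$ and the meet identity holds for all non-disjoint pairs. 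For disjoint pairs one can note that the proof of Lemma \ref{Dimlemma} starting from $C=\{x\}$ only ever uses the meet identity for pairs $(x\vee u,\,x\vee u')$ with non-empty intersection, provided Proposition \ref{x-notin-C-prop} is replaced by the direct observation that $\Phi(u)\subseteq\Phi(C)$ would force $\mathcal{H}(C)$ to vanish at a zero of $\mathcal{H}(u)$, contradicting $\mathcal{H}(u)\sqcap_{+}\mathcal{H}(C)=+\infty$; this yields $\dim\Phi(x)\leq 1$, which forces $\mathcal{H}(x)=1^{\infty}_{\{\phi(x)\}}$ for a single point, and disjointness of the images of disjoint bodies then follows. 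Without some such argument the proof is incomplete at its self-declared main technical obstacle.
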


\begin{proof}
It is easily seen that the transformation 
\begin{equation}
{\cal T}:Cvx_{[0,\kappa]}(\mathbb{E}^{c})\to {\mathscr C}(\mathbb{E}^{c}\times {\mathbb R})\ \hbox{ given by }\ f\mapsto epi(f)\cap ({\mathbb E}^{c}\times [0,\kappa])
\end{equation}
is a non-trivial bijective lattice homomorphism. Thus 
\begin{equation*}
\Phi:={\cal T}\circ {\cal H}:{\mathscr C}({\mathbb E}^{c})\to {\mathscr C}(\mathbb{E}^{c}\times{\mathbb R})
\end{equation*} is a non-trivial lattice homomorphism. By Theorem \ref{Theorem Homomorphisms} we have four possible cases regarding the representation of $\Phi$. We are going to show that only the case (\ref{case iii}) of parallel line-segments is relevant for our situation.\\ \\
By Theorem \ref{Theorem Homomorphisms} we obtain for any $\xi\in {\mathbb E}^{c}$ that $\Phi(\xi)=[a_{\xi},b_{\xi}]$ with $a_\xi, b_\xi \in \mathbb{E}^{c+1}$.  
Thus ${\cal H}(\xi)={\cal T}^{-1}([a_{\xi},b_{\xi}])=f$ with $f \in Cvx_{[0,\kappa]}(\mathbb{E}^{c})$ such that 
\begin{equation} \label{epi cap}
epi(f)\cap ({\mathbb E}^{c}\times [0,\kappa])=[a_{\xi},b_{\xi}].
\end{equation}
Since there exists some $x_{\xi}$ such that $f(x_{\xi})=0$ and $\kappa > 0$, we obtain from (\ref{epi cap}) that $a_{\xi}=(x_{\xi},0)$ and $b_{\xi}=(x_{\xi},\kappa)$ and thus $[a_{\xi}, b_{\xi}]\parallel {\mathbb R} \times \lbrace 0 \rbrace$ i.e. we are in the case of parallel line-segments.\\ \\
Thus if we let $\phi(\xi)=x_{\xi}$ the mapping $\phi:{\mathbb E}^{c}\to {\mathbb E}^{c}$ is according to Theorem \ref{Theorem Homomorphisms}, case (\ref{case iii}), a bijective affinity and $\Phi(C)=\phi(C)+[(0,0),(0,\kappa)]$. Thus ${\cal H}(C)={\cal T}^{-1}\circ \Phi(C)=1^{\infty}_{\phi(C)}$ and the theorem is proved.
\end{proof}

\begin{theorem}\label{anti-homo-theorem}
Let $\kappa\in (0,\infty)$, let $c\geq 3$ and let 
\begin{equation*}
{\Lambda}:({\mathscr C}({\mathbb E}^{c}),\wedge,\vee) \to (Cvx_{(0,0)}^{\kappa}(\mathbb{E}^{c}),\sqcap_{-},\sqcup_{-})
\end{equation*}
be a non-trivial lattice anti-homomorphism. Then there exists an affine bijection $\phi:{\mathbb E}^{c}\to {\mathbb E}^{c}$ such that ${\Lambda}(C)={\mathcal L}[1^{\infty}_{\phi(C)}]$ with ${\mathcal L}$ denoting the Legendre-Fenchel transformation.
\end{theorem}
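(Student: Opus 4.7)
The plan is to reduce Theorem \ref{anti-homo-theorem} to Theorem \ref{Homomorphismen Satz Anwendungen} by pre-composing $\Lambda$ with the Legendre-Fenchel transform $\mathcal{L}$. The key observation is that $\mathcal{L}$ has already been shown (in the lemma immediately preceding Theorem \ref{Homomorphismen Satz Anwendungen}) to be a bijective lattice anti-homomorphism between $(Cvx_{(0,0)}^{\kappa}(\mathbb{E}^{c}),\sqcap_{-},\sqcup_{-})$ and $(Cvx_{[0,\kappa]}(\mathbb{E}^{c}),\sqcap_{+},\sqcup_{+})$, and moreover an involution.

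Concretely, I would first set $\mathcal{H} := \mathcal{L}\circ \Lambda : \mathscr{C}(\mathbb{E}^{c}) \to Cvx_{[0,\kappa]}(\mathbb{E}^{c})$. Since the composition of two lattice anti-homomorphisms is a lattice homomorphism, a short check using the identities
\begin{equation*}
\mathcal{L}(\Lambda(C\wedge D)) = \mathcal{L}(\Lambda(C)\sqcup_{-}\Lambda(D)) = \mathcal{L}(\Lambda(C))\sqcap_{+}\mathcal{L}(\Lambda(D))
\end{equation*}
and symmetrically for $C\vee D$ yields that $\mathcal{H}$ is indeed a lattice homomorphism into $(Cvx_{[0,\kappa]}(\mathbb{E}^{c}),\sqcap_{+},\sqcup_{+})$. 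Non-triviality of $\Lambda$ transfers to $\mathcal{H}$ at once, because if $\Lambda(C)\neq \Lambda(D)$ then bijectivity of $\mathcal{L}$ forces $\mathcal{H}(C)\neq \mathcal{H}(D)$.

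Next, I would invoke Theorem \ref{Homomorphismen Satz Anwendungen} on $\mathcal{H}$: this gives an affine bijection $\phi:\mathbb{E}^{c}\to \mathbb{E}^{c}$ with $\mathcal{H}(C) = 1^{\infty}_{\phi(C)}$ for every $C\in \mathscr{C}(\mathbb{E}^{c})$. Finally, using that $\mathcal{L}\circ \mathcal{L}$ restricted to $Cvx_{(0,0)}^{\kappa}(\mathbb{E}^{c})$ is the identity (again by the lemma above Theorem \ref{Homomorphismen Satz Anwendungen}), I apply $\mathcal{L}$ to both sides:
\begin{equation*}
\Lambda(C) = \mathcal{L}(\mathcal{L}(\Lambda(C))) = \mathcal{L}(\mathcal{H}(C)) = \mathcal{L}[1^{\infty}_{\phi(C)}],
\end{equation*}
which is exactly the desired representation.

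I do not expect any genuine obstacle in this argument, since the heavy lifting has already been done in Theorem \ref{Homomorphismen Satz Anwendungen} (which in turn rests on the full classification Theorem \ref{Theorem Homomorphisms}) and in the bijectivity/anti-homomorphism lemma for $\mathcal{L}$ restricted to $Cvx_{(0,0)}^{\kappa}$ and $Cvx_{[0,\kappa]}$. The only thing to be careful about is that $\mathcal{H}$ maps into the correct space $Cvx_{[0,\kappa]}(\mathbb{E}^{c})$ (so that Theorem \ref{Homomorphismen Satz Anwendungen} actually applies) and that it remains non-trivial; both are immediate from bijectivity of $\mathcal{L}$ between the two spaces in question.
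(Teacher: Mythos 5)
Your proposal is correct and follows essentially the same route as the paper: compose with $\mathcal{L}$ to get a non-trivial homomorphism $\mathcal{H}=\mathcal{L}\circ\Lambda$ into $(Cvx_{[0,\kappa]}(\mathbb{E}^{c}),\sqcap_{+},\sqcup_{+})$, apply Theorem \ref{Homomorphismen Satz Anwendungen}, and use that $\mathcal{L}$ is an involution to recover $\Lambda(C)=\mathcal{L}[1^{\infty}_{\phi(C)}]$. (Minor quibble: you call this ``pre-composing'' when it is post-composition, but the construction itself is stated correctly.)
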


\begin{proof}
Since the Legendre-Fenchel transform ${\cal L} : Cvx_{(0,0)}^{\kappa}(\mathbb{E}^{c})\to Cvx_{[0,\kappa]}(\mathbb{E}^{c})$ is a non-trivial anti-homomorphism we obtain that
\begin{equation*}
{\cal H}:={\cal L}\circ {\Lambda}:{\mathscr C}({\mathbb E}^{c})\to Cvx_{[0,\kappa]}(\mathbb{E}^{c})\hbox{ is a non-trivial homomorphism.}
\end{equation*}
Thus for some affine bijection $\phi:{\mathbb E}^{c}\to {\mathbb E}^{c}$ we have in accordance with Theorem \ref{Homomorphismen Satz Anwendungen} that ${\mathcal H}(C)={1}_{\phi(C)}^{\infty}$  and thus---since ${\cal L}$ is by Remark \ref{properties-of-fenchel-trafo} an involution---we obtain that ${\Lambda}={\cal L}\circ {\cal H}={\cal L}[{1}^{\infty}_{\phi(C)}]$.  
\end{proof}

\begin{appendix}

\section{Nontrivial Endomorphisms}\label{endomorphisms}

\begin{remark}
We display below the main result of \cite{Gr91}. We outline a short proof of the result based on Proposition \ref{injectivity-on-points-prop}, the Dimension Lemma \ref{Dimlemma} and the proof of Lemma \ref{Gruberfalllemma}.
\end{remark}

\begin{theorem} \label{Grubersatz}
For $c\geq 2$ a mapping $\Phi : \mathscr{C}({\mathbb E}^{c}) \to \mathscr{C}({\mathbb E}^{c})$ is a non trivial endomorphism of the lattice $( \mathscr{C}({\mathbb E}^{c},\wedge,\vee )$ if and only if there exists an affine bijection $ \phi : \mathbb{E}^{c} \to \mathbb{E}^{c} $ such that $ \Phi(C) = \phi(C)$ for each $ C \in \mathscr{C}(\mathbb{E}^{c})$.
\end{theorem}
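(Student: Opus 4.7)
The plan is to follow the strategy of Lemma \ref{Gruberfalllemma} essentially verbatim, exploiting the fact that when the target dimension equals the source dimension, the Dimension Lemma immediately precludes any dimension raising and therefore collapses the hard case analyses of Sections \ref{section-no-dim-gape} and \ref{section-dim-gape}. The implication from $\Phi(C)=\phi(C)$ to $\Phi$ being a non-trivial endomorphism is routine, so I focus on the converse.

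Let $\Phi:\mathscr{C}(\mathbb{E}^c)\to\mathscr{C}(\mathbb{E}^c)$ be a non-trivial homomorphism. By Proposition \ref{injectivity-on-points-prop} (which only requires $c\ge 2$), the map $x\mapsto\Phi(x)$ is injective on one-point sets and $\Phi(\emptyset)\subsetneq\Phi(x)$ for every $x\in\mathbb{E}^c$. Applying the Dimension Lemma \ref{Dimlemma} with $d=c$ to the empty set (whose dimension $-1$ satisfies $-1\le c-2$) yields $\dim\Phi(\emptyset)\le -1$, hence $\Phi(\emptyset)=\emptyset$. Applied to a one-point set (dimension $0\le c-2$) it yields $\dim\Phi(x)\le 0$, so $\Phi(x)$ is a single point. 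I may therefore define $\phi:\mathbb{E}^c\to\mathbb{E}^c$ by $\{\phi(x)\}:=\Phi(x)$, and injectivity of $\phi$ is immediate.

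Next I replicate the order-preservation step from the proof of Lemma \ref{Gruberfalllemma}: for $z\in(x,y)$ the homomorphism property gives $\{\phi(z)\}=\Phi(z)\subseteq\Phi(x\vee y)=[\phi(x),\phi(y)]$, while injectivity rules out the endpoints, so $\phi(z)\in(\phi(x),\phi(y))$. Corollary \ref{corollary-aff-geometry} then identifies $\phi$ as an affine bijection from $\mathbb{E}^c$ onto the affine subspace $H:=\phi(\mathbb{E}^c)\subseteq\mathbb{E}^c$. Convexity of $H$ together with $\Phi$ being a homomorphism verify hypotheses (i)--(iv) of Proposition \ref{Prop Bedingungen für Phi}; combined with $\Phi(\emptyset)=\emptyset$ this gives $\Phi(C)=\bigcup_{x\in C}\Phi(x)=\phi(C)$ for every $C\in\mathscr{C}(\mathbb{E}^c)$.

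The only new ingredient beyond Lemma \ref{Gruberfalllemma} is surjectivity of $\phi$, i.e.\ $H=\mathbb{E}^c$, and this is the single place where target equals source dimension enters non-trivially. Pick any $C\in\mathscr{C}(\mathbb{E}^c)$ with $\dim C=c$; Proposition \ref{dimprop} forces $\dim\Phi(C)\ge c$, while $\Phi(C)=\phi(C)\subseteq H$ forces $\dim H\ge c$, hence $H=\mathbb{E}^c$. I do not expect a real obstacle: the proof is considerably shorter than that of Lemma \ref{Gruberfalllemma} precisely because the Dimension Lemma in the case $d=c$ collapses everything to points at once, so no transversality or quadrangle arguments are needed.
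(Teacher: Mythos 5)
Your proof is correct and follows essentially the same route as the paper: the Dimension Lemma with $d=c$ forces $\Phi(\emptyset)=\emptyset$ and $\Phi(x)$ to be a point, after which the argument of Lemma \ref{Gruberfalllemma} is repeated verbatim. Your explicit surjectivity step is a welcome detail the paper leaves implicit, though it already follows from $\phi$ being an affine bijection of $\mathbb{E}^{c}$ onto a $c$-dimensional affine subspace of $\mathbb{E}^{c}$.
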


\begin{proof}
From the Dimension Lemma \ref{Dimlemma} we obtain that the $(-1)$ dimensional empty set has to be mapped to the empty set. From Proposition  \ref{injectivity-on-points-prop} we obtain that the $0$-dimensional one-point sets can not be mapped to the empty set and thus by the Dimension Lemma \ref{Dimlemma} have to be mapped to one-point sets. The proof is concluded by the very same argument that proves Lemma \ref{Gruberfalllemma}.
\end{proof}

\section{Transversality Theorems}\label{transversality-theorems}

\begin{remark}
In this section we provide two transversality theorems, one for each case considered in the proof of Lemma \ref{empty-to-empty-lemma}. The transversality theorem for the case of parallel line-segments, Theorem \ref{parallel-transversality-theo}, is a well known consequence of a theorem of S. Karlin and L.S. Shapley \cite{KarlinSharpley1950} and thus of Helly's theorem (see Danzer, Grünbaum and Klee \cite[Remarks following Section 2]{DanzerGruenbaumKlee}). The proof of its two dimensional version \cite[Theorem 6.8]{Valentine}, \cite[Section 2.2]{DanzerGruenbaumKlee} 
extends without any difficulty to the general case. For the readers convenience we provide a proof-sketch in Remark \ref{proof-transversal-remark}. Our transversality result for line-segments directed to a given point $o\in {\mathbb E}^{d}$, Theorem \ref{transversality theorem}, is a seemingly less trivial consequence of Helly's theorem. 
\end{remark}

\begin{theorem}[Transversality theorem for parallel segments]\label{parallel-transversality-theo}
Let ${\cal I}$ be a family of possibly degenerate compact line-segments $I\subseteq {\mathbb E}^{d}$ such that $I\in {\cal I}$ implies that $I\subseteq g_{I}$ with $g_{I}$ some line parallel to a given line $g\subset {\mathbb E}^{d}$. If for any $(d+1)$-element set ${\cal K}\subseteq {\cal I}$ there exists a hyperplane $H_{\cal K}$ not parallel to $g$ that intersects any line-segment $I\in {\cal K}$, then there exists a hyperplane $H$ not parallel to $g$ that intersects any line-segment $I\in {\cal I}$ (in a single point $p_{I}$).
\end{theorem}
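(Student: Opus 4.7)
The plan is to recast the problem in the parameter space of hyperplanes not parallel to $g$, where each segment corresponds to a closed convex slab, and then combine Helly's theorem with a compactness argument via induction on $d$.

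First I would choose coordinates so that $g$ is the last coordinate axis and let $\pi:{\mathbb E}^d\to{\mathbb E}^{d-1}$ denote projection onto the first $d-1$ coordinates. Every hyperplane not parallel to $g$ is then the graph of a unique affine function $y\mapsto c-\langle a',y\rangle$, so such hyperplanes are parametrised by $(a',c)\in{\mathbb R}^{d-1}\times{\mathbb R}\cong{\mathbb R}^d$. Since each $I\in\mathcal I$ is parallel to $g$, we have $\pi(I)=\{q_I\}$ and $I=\{q_I\}\times[\alpha_I,\beta_I]$; the hyperplane with parameters $(a',c)$ meets $I$ precisely when $\alpha_I\le c-\langle a',q_I\rangle\le\beta_I$, in which case the intersection is the singleton $(q_I,c-\langle a',q_I\rangle)$. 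Thus the set of parameters yielding a transversal of $I$ is the closed convex slab
\[
A_I:=\{(a',c)\in{\mathbb R}^d:\alpha_I\le c-\langle a',q_I\rangle\le\beta_I\}.
\]
The hypothesis of the theorem translates to: every $(d{+}1)$-element subfamily of $\{A_I\}_{I\in\mathcal I}$ has nonempty intersection, which by classical Helly in ${\mathbb R}^d$ upgrades to: every finite subfamily of $\{A_I\}$ has nonempty intersection.

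To pass to infinite subfamilies I would induct on $d$, with the one-dimensional Helly theorem as base case. Let $V$ denote the affine hull of $\{q_I:I\in\mathcal I\}$ in ${\mathbb R}^{d-1}$. If $\dim V<d-1$, all segments lie in the proper affine subspace $W:=V\times{\mathbb R}\subsetneq{\mathbb R}^d$; any hyperplane of ${\mathbb E}^d$ not parallel to $g$ meets $W$ in a hyperplane of $W$ not parallel to $g$, so the $(d{+}1)$-hypothesis supplies the smaller $(\dim W{+}1)$-hypothesis within $W$. Induction yields a transversal inside $W$, which is extended to a hyperplane of ${\mathbb E}^d$ not parallel to $g$ (for instance as a direct sum with a complement of $W$). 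If $\dim V=d-1$ I pick $I_1,\dots,I_d\in\mathcal I$ whose projections are affinely independent in ${\mathbb R}^{d-1}$; the recession cone of $A_{I_k}$ is the hyperplane $\{(a',c):c=\langle a',q_{I_k}\rangle\}$, and affine independence forces
\[
\bigcap_{k=1}^{d}\bigl\{(a',c):c=\langle a',q_{I_k}\rangle\bigr\}=\{0\}.
\]
Consequently $B:=A_{I_1}\cap\dots\cap A_{I_d}$ is a bounded, closed, convex set---nonempty by the finite-family Helly consequence.

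With $B$ in hand, the family $\{A_I\cap B:I\in\mathcal I\}$ consists of compact convex subsets of $B$; each finite subintersection is nonempty by the finite-family Helly consequence, so the family has the finite intersection property, and compactness of $B$ yields $\bigcap_{I\in\mathcal I}(A_I\cap B)\neq\emptyset$. Any element $(a',c)$ of this intersection parametrises the desired transversal hyperplane, with unique intersection point $(q_I,c-\langle a',q_I\rangle)$ on each $I$. The main obstacle is precisely this compactness step---the slabs $A_I$ are a priori unbounded and the standard form of Helly for infinite families demands at least one compact member---and the key observation that removes it is that $d$ affinely independent projections automatically pin down a bounded intersection of $d$ slabs.
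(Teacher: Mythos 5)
Your proposal is correct and follows essentially the same route as the paper's Remark \ref{proof-transversal-remark}: parametrise hyperplanes not parallel to $g$ by affine functionals on ${\mathbb E}^{d-1}$, observe that the transversals of each segment form a closed convex slab, and apply Helly's theorem together with a compactness argument. You additionally supply the details the paper leaves implicit --- the induction on the dimension of the affine hull of the projections, and the observation that $d$ slabs over affinely independent base points have trivial common recession cone and hence bounded intersection --- and these details are sound.
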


\begin{remark}\label{proof-transversal-remark}
To prove Theorem \ref{parallel-transversality-theo} suppose without loss of generality that $g\parallel \{ 0 \} \times {\mathbb R} \subseteq {\mathbb E}^{d-1}\times {\mathbb R} = {\mathbb E}^{d}$. Thus a hyperplane $H$ is not parallel to $g$ iff $H$ is the graph of a linear functional $\psi_{H}:{\mathbb E}^{d-1}\to {\mathbb R}$. Further any compact line-segment $I\parallel g$ is of the form $I=[ (x_{I}, a_{I}), (x_{I}, b_{I})]$ with $x_{I}\in {\mathbb E}^{d-1}$ and $a_{I},b_{I}\in {\mathbb R}$. Consequently $H$ intersects the compact line-segment $I$ iff $\psi_{H}(x_{I})\in [a_{I},b_{I}]$. Since $\{ \psi \mid \psi(x_{i})\in [a_{I},b_{I}] \}$ is convex Theorem \ref{parallel-transversality-theo} follows after application of a compactness argument from Helly's theorem.   
\end{remark}

\begin{definition}\label{cal H-iota-notation}
Let $o\in {\mathbb E}^{d}$ be fixed and let ${\cal H}$ be the space of all hyperplanes $H\subseteq {\mathbb E}^{d}$ with $o\notin H$. Denote by $n_{H}$ the unit normal vector onto $H\in {\cal H}$ pointing to the opposite side of $o$ and let $\Delta_{H}$ denote the (minimal) distance of $H$ to $o$. Define further a function $\iota:{\cal H}\to {\mathbb E}^{d}$ by $\iota(H):=n_{H}/\Delta_{H}$.
\end{definition}

\begin{remark}\label{iota-def+prop}
The function $\iota$ introduced in Definition \ref{cal H-iota-notation} maps ${\cal H}$ bijectively onto ${\mathbb E}^{d}\setminus \{ 0 \}$. We call any $H\in {\cal H}$ a polar with corresponding pole $\iota(H)$ with respect to the unit circle around $o$ (cf. \cite[Definition 11.3.13]{Zieschang}).
\end{remark}

\begin{notation}
Let $o\in {\mathbb E}^{d}$ be fixed and let ${\cal M}$ be a family of sets $M\subset {\mathbb E}^{d}$. Then we let ${\cal H}_{\cal M}:=\{ H\in {\cal H} \mid (\forall M\in {\cal M})\ M\cap H\neq \emptyset \}$.  
\end{notation}

\begin{lemma}\label{convexity-of-family-of-hyperplanes}
Let $\iota$ denote the function introduced in Proposition \ref{iota-def+prop}. Let $r$ be a ray emanating from $o\in {\mathbb E}^{d}$ and let $C\subset r\setminus \{ o \}$ be convex i.e. let $C$ be a line-segment on $r\setminus \{ o \}$. Then 
\begin{equation*}
\iota( {\cal H}_{\{ C\} })\ \hbox{ is convex}.
\end{equation*}
If $C$ is additionally compact i.e. a compact line-segment then $\iota( {\cal H}_{\{ C\} } )$ is a closed convex set.
\end{lemma}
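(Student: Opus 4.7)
The plan is to make the polarity $\iota$ explicit, which turns the whole statement into a trivial observation about a linear form. From $\iota(H)=n_{H}/\Delta_{H}$ and the normal-form equation $\langle x-o,\, n_{H}\rangle =\Delta_{H}$ for any $H\in {\cal H}$, one reads off the identity
\begin{equation*}
H=\{x\in {\mathbb E}^{d}\mid \langle x-o,\, \iota(H)\rangle =1\}.
\end{equation*}
Thus, writing $p=\iota(H)$, the statement ``$H$ meets a prescribed set'' becomes a purely linear condition on $p$, and convexity of $\iota({\cal H}_{\{C\}})$ reduces to convexity of a preimage under a continuous linear form.

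Concretely, I would parametrize the ray as $r=\{o+tv\mid t\geq 0\}$ for some unit vector $v$. By convexity of $C\subset r\setminus\{o\}$, we may write $C=\{o+tv\mid t\in I\}$ for an interval $I\subset (0,\infty)$, which is closed and bounded exactly when $C$ is compact. A point $o+tv$ lies on $H$ iff $t\langle v,p\rangle =1$, so $H\cap C\neq \emptyset$ iff $\langle v,p\rangle \in J$, where $J:=\{1/t\mid t\in I\}$. Since $t\mapsto 1/t$ is a homeomorphism of $(0,\infty)$ that sends intervals to intervals and closed bounded intervals to closed bounded intervals, $J$ is again an interval in $(0,\infty)$, closed whenever $I$ is.

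Putting everything together,
\begin{equation*}
\iota({\cal H}_{\{C\}}) = \{p\in {\mathbb E}^{d}\mid \langle v,p\rangle \in J\},
\end{equation*}
the preimage of an interval under the continuous linear form $p\mapsto \langle v,p\rangle$. This is a (possibly degenerate) slab, hence convex; and closed whenever $J$ is closed, i.e. whenever $C$ is compact. Because $J\subset (0,\infty)$, every such $p$ automatically satisfies $\langle v,p\rangle >0$ and in particular $p\neq 0$, so we really land inside $\iota({\cal H})={\mathbb E}^{d}\setminus\{0\}$ and no subtle issue arises at the pole. The only real work is bookkeeping—verifying the polar identity and that the reparametrization $t\mapsto 1/t$ respects all interval types (open, half-open, closed)—so I expect no genuine obstacle beyond these checks.
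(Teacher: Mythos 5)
Your proof is correct, and it takes a genuinely different and cleaner route than the paper's. You make the polarity explicit via the identity $H=\{x\mid \langle x-o,\iota(H)\rangle =1\}$, after which membership of a point $o+tv\in C$ in $H$ becomes the linear condition $t\langle v,\iota(H)\rangle =1$, so $\iota({\cal H}_{\{C\}})$ is literally the slab $\{p\mid \langle v,p\rangle\in J\}$ with $J=\{1/t\mid t\in I\}$; convexity and (for compact $C$) closedness then follow from continuity and linearity of $p\mapsto\langle v,p\rangle$. The paper instead argues synthetically: it splits into the cases $\underline{H}\parallel\overline{H}$ and $\underline{H}\nparallel\overline{H}$, and in the latter works in the pencil of hyperplanes through $G=\underline{H}\cap\overline{H}$, invoking Thales' theorem and an explicit $\cot(\alpha/2)$ computation in a $2$-plane to see that $\alpha\mapsto\iota(H_\alpha)$ traces out a line monotonically. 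Your approach buys three things: it avoids the case split entirely, it actually proves the closedness assertion (which the paper explicitly skips, proving ``just the convexity''), and it realizes the simplification that the paper's own preceding remark about $\Pol$ and the relation $x\in H\Leftrightarrow \Pol(H)\in\Pol(x)$ hints at but does not carry out. The only bookkeeping worth stating explicitly in a final write-up is the observation you already make, that $J\subset(0,\infty)$ forces $\langle v,p\rangle>0$ and hence $p\neq 0$, so the slab genuinely lies in $\iota({\cal H})={\mathbb E}^{d}\setminus\{0\}$.
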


\begin{remark}
If we identify maps with their graphs, it is well known that the mapping $\Pol := \iota \cup \iota^{-1}$ fulfils $x\in H \Longleftrightarrow \Pol(H) \in \Pol(x)$ (compare with \cite[Theorem 11.3.14]{Zieschang} in the two dimensional case). This fact can be used to simplify the following argument. 
\end{remark}

\begin{proof}[Proof of Lemma \ref{convexity-of-family-of-hyperplanes}]
We just prove the convexity of $\iota( {\cal H}_{\{ C\} })$. Let $\underline{H}, \overline{H}\in  {\cal H}_{\{ C\}}$. We have to show that 
\begin{equation}\label{convexity-of-H_C}
[\iota(\underline{H}),\iota(\overline{H})]\subseteq \iota( {\cal H}_{\{ C\}} ).
\end{equation}
We distinguish the cases that $\underline{H}\parallel \overline{H}$ and $\underline{H}\nparallel \overline{H}$. The first case is almost trivial, thus we consider the second one. Let $G=\underline{H}\cap \overline{H}$. Note that $G$ is a $(d-2)$-dimensional subspace of ${\mathbb E}^{d}$. Let ${\cal H}_{\supset G}$ be the family of all $H\in {\cal H}$ with $G\subset H$. Let $F$ be the plane through $o$ perpendicular to $G$ and suppose without loss of generality that $0\in F$ i.e. that $F$ is a vector space.\\ \\
Denote by $p_{G}$ the unique intersection-point of $F$ and $G$ and let $m=\frac{1}{2}(p_{G}+o)$. Note that the foot-point $p_{H}$ of $o$ in the hyperplane $H$ is for any $H\in {\cal H}_{\supset G}$ an element of $F$. Thus 
$\iota(H)$ is for any $H\in {\cal H}_{G}$ an element of $F$ and we are able to argue entirely in $F$.\\ \\
By a theorem of Thales we obtain that the points $p_{H}$ are for $H\in {\cal H}_{\supset G}$ all located on the circle $R\subseteq F$ with center $m$ and radius $\Vert o-m\Vert_{2}$. We can thus suppose that the elements of ${\cal H}_{\supset G}$ are parametrized by the angle $\alpha\in (0,2\pi)$ enclosed by $p_{H}-m$ and $o-m$ i.e. we let ${\cal H}_{\supset G} = \{ H_{\alpha} \mid \alpha\in (0,2\pi) \}$. We further identify without loss of generality $F$ with ${\mathbb E}^{2}$ and suppose further---by application of the appropriate orthogonal transformation followed by a dilation---that $m=(0,0)$, $o=(1,0)$ and thus $p_{G}=(-1,0)$. The coordinates of $p_{H_{\alpha}}$ are then given by $(\cos(\alpha),\sin(\alpha))$ and $\Delta_{H}^{2} = \Vert p_{H_{\alpha}} - o\Vert_{2}^{2} = (1-\cos(\alpha))^{2} + \sin^{2}(\alpha)= 2(1 - \cos(\alpha))$. Thus 
\begin{equation*}
\iota(H_{\alpha}) = n_{H}/\Delta_{H} = \frac{n_{H}\cdot \Delta_{H}}{\Delta_{H}^{2}} = \frac{1}{2} \left (-1,\frac{\sin(\alpha)}{1-\cos(\alpha)} \right ) = \frac{1}{2} \cdot (-1,\cot(\alpha/2)) 
\end{equation*}
This shows that $\alpha \mapsto \iota(H_{\alpha})$ is a strictly monotone and continuous mapping from ${\mathbb R}$ to a line. Thus we obtain for $H_{\underline{\alpha}}:=\underline{H}$ and $H_{\overline{\alpha}}:=\overline{H}$ that 
\begin{equation*}
[\iota(\underline{H}),\iota(\overline{H})] = \{ \iota(H_{\alpha}) \mid \alpha\in [\underline{\alpha},\overline{\alpha}] \}.
\end{equation*}
Since further for any $\alpha \in [\underline{\alpha},\overline{\alpha}]$ we have that $C\cap H_{\alpha}\neq \emptyset$, we obtain (\ref{convexity-of-H_C}).
\end{proof}

\begin{theorem}[Transversality theorem for segments directed towards $o$]\label{transversality theorem}
Let $o\in {\mathbb E}^{d}$ and let ${\cal I}$ be a family of possibly degenerate compact line-segments $I\subseteq {\mathbb E}^{d}$ such that $I\in {\cal I}$ implies that $I\subseteq r_{I}\setminus \{ o \}$ with $r_{I}$ some ray emanating from $o$. If for any $d+1$-element set ${\cal K}\subseteq {\cal I}$ we have that ${\cal H}_{\cal K}\neq \emptyset$ then ${\cal H}_{\cal I}\neq \emptyset$. I.e. if for any $d+1$-element set ${\cal K}\subseteq {\cal I}$ there exists a hyperplane $H\in {\cal H}$ that intersects any line-segment $I\in {\cal K}$, then there exists a hyperplane $H\in {\cal H}$ that intersects any line-segment $I\in {\cal I}$ (in a single point $p_{I}$).
\end{theorem}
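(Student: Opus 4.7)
The plan is to translate the problem via the polar map $\iota$ from Definition~\ref{cal H-iota-notation} and apply Helly's theorem. For each $I \in \mathcal{I}$ set $K_I := \iota(\mathcal{H}_{\{I\}}) \subseteq \mathbb{E}^d$. By Lemma~\ref{convexity-of-family-of-hyperplanes} (with $C=I$), each $K_I$ is closed and convex. The hypothesis translates into the statement that any $d+1$ of the sets $K_I$ have non-empty common intersection, and the finite version of Helly's theorem in $\mathbb{E}^d$ then upgrades this to the finite intersection property for the entire family $\{K_I\}_{I \in \mathcal{I}}$. Any point $y \in \bigcap_{I \in \mathcal{I}} K_I$ yields, through $\iota^{-1}$, a hyperplane in $\mathcal{H}_{\mathcal{I}}$ intersecting every $I \in \mathcal{I}$, so it suffices to produce such a $y$.

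A brief computation identifies the geometric shape of $K_I$: writing $I = \{o + t v_I : t \in [\alpha_I, \beta_I]\}$ with $v_I$ a unit vector and $0 < \alpha_I \le \beta_I$, a hyperplane $H$ with polar $y = \iota(H)$ meets $I$ iff $t\langle v_I, y\rangle = 1$ for some $t \in [\alpha_I, \beta_I]$, i.e.\ iff $1/\beta_I \le \langle v_I, y \rangle \le 1/\alpha_I$. Thus
\[
K_I = \{y \in \mathbb{E}^d : 1/\beta_I \le \langle v_I, y \rangle \le 1/\alpha_I\}
\]
is a closed slab with normal $v_I$ avoiding $0$, whose recession cone is $v_I^\perp$. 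In particular an intersection $K_{I_1} \cap \dots \cap K_{I_k}$ is bounded precisely when the normals $v_{I_1}, \dots, v_{I_k}$ span $\mathbb{E}^d$.

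The hard step is the passage from the finite to the infinite intersection, since Helly's theorem in its standard infinite form requires some compactness that is absent here: no individual $K_I$ is bounded. To overcome this, let $V := \mathrm{span}\{v_I : I \in \mathcal{I}\}$, $d' := \dim V$, and pick $I_1, \dots, I_{d'} \in \mathcal{I}$ with $v_{I_1}, \dots, v_{I_{d'}}$ a basis of $V$. Because every $v_I$ lies in $V$, each slab $K_I$ is invariant under translation by $V^\perp$, so the orthogonal projection $\pi : \mathbb{E}^d \to V$ satisfies $K_I = \pi^{-1}(\pi(K_I))$, with $\pi(K_I) \subseteq V$ again a closed slab. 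The set $B := \pi(K_{I_1}) \cap \dots \cap \pi(K_{I_{d'}})$ is bounded (its defining normals span $V$), hence compact. The $(d+1)$-fold intersection property of $\{K_I\}$ descends to the $(d'+1)$-fold intersection property of $\{\pi(K_I)\}$ in $V \cong \mathbb{E}^{d'}$ (any point common to $d+1$ of the $K_I$ projects to a point common to the corresponding $d'+1$ slabs in $V$), so by finite Helly in $V$ the family $\{\pi(K_I) \cap B\}_{I \in \mathcal{I}}$ of closed subsets of the compact set $B$ has the finite intersection property; compactness yields a common point $y' \in \bigcap_{I \in \mathcal{I}} \pi(K_I)$, which lifts via $\pi$ to a point $y \in \bigcap_{I \in \mathcal{I}} K_I$. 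The hyperplane $\iota^{-1}(y) \in \mathcal{H}_{\mathcal{I}}$ is then the desired transversal, completing the proof (the degenerate situation $|\mathcal{I}| \le d$ is handled separately by a direct construction of a transversal through chosen points on each segment).
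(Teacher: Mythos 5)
Your proof follows essentially the same route as the paper's: pass to poles via $\iota$, invoke Lemma \ref{convexity-of-family-of-hyperplanes} to see that the sets $\iota({\cal H}_{\{I\}})$ are closed and convex, and apply Helly's theorem after securing the missing compactness --- your projection onto $V=\mathrm{span}\{v_I\}$ plays exactly the role of the paper's reduction to the maximal subspace $\aff(o,\bigcup{\cal K}_0)$, and your explicit identification of $\iota({\cal H}_{\{I\}})$ as the slab $\{y: 1/\beta_I\le \langle v_I,y\rangle\le 1/\alpha_I\}$ is a nice touch that makes both the convexity and the boundedness of the relevant finite intersections transparent. The one inaccuracy is the closing parenthetical: for $|{\cal I}|\le d$ a transversal avoiding $o$ need not exist at all (take $d=2$, $o$ the origin, and two segments on the two opposite rays of a line through $o$; the only line meeting both passes through $o$), so that case cannot be ``handled by a direct construction''; it is simply outside the intended scope of the statement, which --- as the paper's proof also tacitly assumes --- concerns families containing at least one $(d+1)$-element subfamily.
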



\begin{proof}
Without loss of generality we prove the result just in the case that there exists some $d+1$-element set ${\cal K}_{0}$ such that $\aff(o,\bigcup {\cal K}_{0})={\mathbb E}^{d}$. 
(In case that this hypothesis does not hold just replace ${\mathbb E}^{d}$ by the unique maximal subspace for that the hypothesis is fulfilled.) 
By Lemma \ref{convexity-of-family-of-hyperplanes} the set $\iota ({\cal H}_{I})$ is for any $I\in {\cal I}$ closed and convex. Thus the hypotheses of the theorem say that the family $\{ \iota ({\cal H}_{I}) \mid I\in {\cal I} \}$ consists of closed convex non-empty subsets of ${\mathbb E}^{d}$ such that any $d+1$ of them possess non empty intersection. Further it is not difficult to see that $\iota ({\cal H}_{{\cal K}_{0}})$ is compact. Thus by application of a version of Helly's theorem (Theorem \ref{helly's} below) 
we obtain that $\iota ({\cal H}_{I})\neq \emptyset$ and thus ${\cal H}_{I}\neq\emptyset$.
\end{proof}

\begin{theorem}[Helly's Theorem, compare with \cite{DanzerGruenbaumKlee}, \cite{Gr07} and \cite{Valentine}]\label{helly's}
Let ${\cal C}$ be a family of closed convex sets in ${\mathbb E}^{d}$ such that for any $d+1$-element set ${\cal C}_{0}\subseteq {\cal C}$ we have $\bigcap {\cal C}_{0}\neq \emptyset$. Suppose further that for one $d+1$-element set ${\cal C}_{0}\subseteq {\cal C}$ we have that $\bigcap {\cal C}_{0}$ is compact. Then $\bigcap {\cal C} \neq \emptyset$.
\end{theorem}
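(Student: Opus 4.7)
The plan is to reduce this to the classical finite version of Helly's theorem and then use a compactness argument to handle the (possibly infinite) family $\mathcal{C}$. First I would establish, or recall, the classical finite Helly theorem: for a finite family $\mathcal{F}$ of convex sets in $\mathbb{E}^{d}$ such that every $(d+1)$-subfamily of $\mathcal{F}$ has non-empty intersection, one has $\bigcap \mathcal{F} \neq \emptyset$. This is the version most naturally proved by induction on $|\mathcal{F}|$, with the base case $|\mathcal{F}|=d+1$ being the hypothesis and the inductive step following from Radon's theorem (which the paper already invokes in the proof of Proposition \ref{dimprop}), so this step fits naturally within the toolkit already used.

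Next, let $\mathcal{C}_{0}\subseteq \mathcal{C}$ be the distinguished $(d+1)$-element subfamily with $K := \bigcap \mathcal{C}_{0}$ compact, convex, and non-empty. For any finite subfamily $\mathcal{C}'\subseteq \mathcal{C}$, the finite family $\mathcal{C}_{0}\cup \mathcal{C}'$ inherits the property that any $(d+1)$ of its members have non-empty intersection. Applying the finite Helly theorem from Step 1 to $\mathcal{C}_{0}\cup\mathcal{C}'$ yields
\begin{equation*}
K \cap \bigcap_{C\in \mathcal{C}'} C \;=\; \bigcap (\mathcal{C}_{0}\cup \mathcal{C}') \;\neq\; \emptyset.
\end{equation*}

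Finally, consider the family $\{K\cap C : C\in \mathcal{C}\}$. Each member is a closed subset of the compact set $K$ (closed because $C$ is closed in $\mathbb{E}^{d}$ and intersected with the compact $K$). The display above shows this family has the finite intersection property. By compactness of $K$, its total intersection is non-empty, hence
\begin{equation*}
\bigcap \mathcal{C} \;\supseteq\; K \cap \bigcap_{C\in \mathcal{C}} C \;\neq\; \emptyset,
\end{equation*}
which is the desired conclusion. The main conceptual step is the finite Helly theorem (via Radon); once that is in hand, the passage from the finite to the arbitrary family is a routine finite-intersection-property argument leveraging the one compactness hypothesis. No additional subtlety arises because compactness is imposed on a single $(d+1)$-fold intersection, which is exactly the right assumption to make the closed-subsets-of-a-compact argument go through.
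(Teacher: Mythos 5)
Your argument is correct and is the standard derivation of this version of Helly's theorem: first the finite case (proved by induction via Radon's theorem, consistent with the toolkit the paper already uses), then the passage to an arbitrary family by observing that $\{K\cap C : C\in\mathcal{C}\}$, with $K=\bigcap\mathcal{C}_{0}$ compact, is a family of closed subsets of a compact set with the finite intersection property. The paper states this theorem as a known result with citations and gives no proof of its own, so there is nothing to compare against; your write-up supplies the missing argument correctly, the only implicit point worth noting being that every family considered in your finite step contains $\mathcal{C}_{0}$ and hence has at least $d+1$ members, so the finite Helly theorem applies without a degenerate case.
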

$ $\\
{\bf Aknowledgment} We would like to thank Endre Makai Junior for reading main parts of the article and providing us with many extremely accurate observations (remarks, corrections and suggestions for improvement).

\end{appendix}

\end{document}